\newtheorem{theo}{Theorem}
\newtheorem{lem}{Lemma}
\newtheorem{remark}{Remark}
\newtheorem{proposition}{Proposition}
\title{Elliptic fibrations on the modular surface associated to $\Gamma_1(8)$}
\begin{document}

\author{Marie Jos\'e Bertin \& Odile Lecacheux}

\keywords{Modular Surfaces, Niemeier lattices, Elliptic fibrations of $K3$ surfaces }

\email{bertin@math.jussieu.fr, lecacheu@math.jussieu.fr}
\date{\today}

\curraddr{Universit\'e Pierre et Marie Curie (Paris 6), Institut
de Math\'ematiques, 4 Place Jussieu, 75005 PARIS, France}

\begin{abstract} We give all the elliptic fibrations of the $K3$ surface associated to the modular group $\Gamma_1(8)$.
\end{abstract}

\maketitle


\section{Introduction}

Stienstra and Beukers \cite{SB} considered the elliptic pencil
$$xyz+\tau (x+y)(x+z)(y+z)=0$$

and the associated $K3$ surface $\mathcal B$ for $\tau =t^2$, double cover of the modular surface for the modular group $\Gamma_0(6)$. With the help of its $L$-series, they remarked that this surface should carry an elliptic pencil exhibiting it as the elliptic modular surface for $\Gamma_1(8)$ and deplored it was not visible in the previous model of $\mathcal B$.

Later on, studying the link between the logarithmic Mahler measure of some $K3$ surfaces and their $L$-series, Bertin considered in \cite{Ber} $K3$ surfaces of the family

$$(Y_k) \,\,\,\,\,\,\,\,X+\frac {1}{X}+Y+\frac {1}{Y}+Z+\frac {1}{Z}=k.$$
 
For $k=2$, Bertin proved that the corresponding $K3$ surface $Y_2$ is singular (i.e. its Picard rank is $20$) with transcendental lattice 

$$
\left (
\begin{matrix}
2 & 0 \\
0 & 4
\end{matrix}
\right ).
$$

Bertin noticed that $Y_2$ was nothing else than $\mathcal B$, corresponding to the elliptic fibration $X+Y+Z=s$ and $1/\tau =(s-1)^2$. Its singular fibers are of Dynkin type $A_{11}$, $A_5$, $2A_1$ and Kodaira type $I_{12}$, $I_6$, $2I_2$, $2I_1$. Its Mordell-Weil group is the torsion group $\mathbb Z /6 \mathbb Z$.

Using an unpublished result of Lecacheux (see also section 7 of this paper), Bertin showed also that $Y_2$ carries the structure of the modular elliptic surface for $\Gamma_1(8)$. In that case, it corresponds to the elliptic fibration of $Y_2$ with parameter $Z=s$. Its singular fibers are of Dynkin type $2A_7$, $A_3$, $A_1$ and Kodaira type $2I_8$, $I_4$, $I_2$, $2I_1$. Its Mordell-Weil group is the torsion group $\mathbb Z/8 \mathbb Z$.

Interested in $K3$ surfaces with Picard rank $20$ over $\mathbb Q$, Elkies proved in \cite{El1} that their transcendental lattices are primitive of class number one. In particular, he gave in \cite{El2} a list of $11$ negative integers $D$ for which there is a unique $K3$ surface $X$ over $\mathbb Q$ with N\'eron-Severi group of rank $20$ and discriminant $-D$ consisting entirely of classes of divisors defined over $\mathbb Q$.
For $D=-8$, he gave an explicit model of an elliptic fibration with $E_8$ ($=II^*$) fibers at $t=0$ and $t=\infty$ and an $A_1$ ($=I_2$) fiber at $t=-1$

$$y^2=x^3-675x+27(27t-196+\frac {27}{t}).$$

For this fibration, the Mordell-Weil group has rank $1$ and no torsion.

Independently, Sch\"{u}tt proved in \cite{Sch} the existence of $K3$ surfaces of Picard rank $20$ over $\mathbb Q$ and gave for the discriminant $D=-8$ an elliptic fibration with singular fibers $A_3$, $E_7$, $E_8$ ( $I_4$, $III^*$, $II^*$) and Mordell-Weil group equal to $(0)$.
For such a model, you can refer to \cite{Sch1}.

Recall also that Shimada and Zhang gave in \cite{Shim} a list, without equations but with their Mordell-Weil group, of extremal elliptic $K3$ surfaces. In particular, there are $14$ extremal elliptic $K3$ surfaces with transcendental lattice
$$
\left (
\begin{matrix}
2  &   0 \\
0  &   4
\end{matrix}
\right ).
$$

At last, we mention Beukers and Montanus who worked out the semi-stable, extremal, elliptic fibrations of $K3$ surfaces \cite{BM}.

As announced in the abstract, the aim of the paper is to determine all the elliptic fibrations with section on the modular surface associated to $\Gamma_1(8)$ and give for each fibration a Weierstrass model. Thus we recover all the extremal fibrations given by Shimada and Zhang and also fibrations of Bertin, Elkies, Sch\"{u}tt and Stienstra-Beukers mentioned above.

The paper is divided in two parts. In the first sections we use Nishiyama's method, as explained in \cite{Nis} and \cite{SS}, to determine all the elliptic fibrations of $K3$ surfaces with a given transcendental lattice. The method is based on lattice theoretical ideas. We prove the following theorem
\begin{theo}
There are $30$ elliptic fibrations with section, distinct up to isomorphism, on the elliptic surface
$$X+\frac {1}{X}+Y+\frac {1}{Y}+Z+\frac {1}{Z}=2.$$

They are listed in Table 1 with the rank and torsion of their Mordell-Weil group. 

The list consists of $14$ fibrations of rank $0$, $13$ fibrations of rank $1$ and $3$ fibrations of rank $2$.
\end{theo}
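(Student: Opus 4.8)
The plan is to apply Nishiyama's lattice-theoretic method in full. The starting data is the surface $X=Y_2$ of the statement, which by the results recalled above is a singular $K3$ surface of Picard rank $20$ with transcendental lattice $T$ of Gram matrix $\left(\begin{smallmatrix} 2 & 0 \\ 0 & 4\end{smallmatrix}\right)$; its N\'eron--Severi lattice $NS(X)$ has signature $(1,19)$ and discriminant form $q_{NS}=-q_T$. First I would compute $q_T$ explicitly (discriminant group $\mathbb{Z}/2\oplus\mathbb{Z}/4$) and then produce the Nishiyama partner: an even negative-definite lattice $T_0$ of rank $t_0=\operatorname{rk}(T)+4=6$ with $q_{T_0}\cong q_T$. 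The signature congruence $-6\equiv 2 \pmod 8$ guarantees that such a $T_0$ can exist; one exhibits it concretely as a root lattice (or a small overlattice thereof) and checks, via Nikulin's criteria, that its genus contains a single class, so that $T_0$ is well defined.

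The engine of the proof is the following correspondence, available precisely because $\operatorname{rk}(T_0)=6$ and the Niemeier lattices have rank $24$: elliptic fibrations with section on $X$, up to isomorphism, are in bijection with primitive embeddings $T_0\hookrightarrow N$ into one of the $24$ Niemeier lattices $N$, taken up to the action of $O(N)$. Writing $W=(T_0)^{\perp}_N$ for the orthogonal complement, one has $\operatorname{rk}W=18$ and $q_W=q_{NS}$, whence $U\oplus W\cong NS(X)$, so that $W$ is the frame of the fibration. From $W$ one reads off every invariant: the reducible fibers correspond to the root sublattice $W_{\mathrm{root}}$ obtained as the orthogonal complement of the image of $T_0$ inside the root sublattice $N_{\mathrm{root}}$ of $N$, the Mordell--Weil rank equals $18-\operatorname{rk}W_{\mathrm{root}}$, and the torsion is the quotient of the primitive closure of $W_{\mathrm{root}}$ in $W$ by $W_{\mathrm{root}}$. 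The $14$ fibrations of rank $0$ should then be exactly the extremal ones, recovering the list of Shimada--Zhang.

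I would then carry out the enumeration Niemeier lattice by Niemeier lattice. For each $N$ the problem reduces to a combinatorial one: list the primitive embeddings of the root system of $T_0$ into the root system $N_{\mathrm{root}}$, compute the orthogonal complement of each image together with the contribution of the glue vectors of $N$, and record the resulting $W_{\mathrm{root}}$, Mordell--Weil rank and torsion. The main obstacle, and the bulk of the work, is precisely this step: classifying the embeddings up to $O(N)$ without over- or under-counting, handling the glue group carefully so that the torsion is computed exactly, and finally matching embeddings arising from different Niemeier lattices that produce isomorphic frames $W$, in order to reduce the raw list to the fibrations that are distinct up to isomorphism. Once the list is assembled, reading off the ranks yields the claimed partition into $14$, $13$ and $3$ fibrations of Mordell--Weil rank $0$, $1$ and $2$, for a total of $30$.
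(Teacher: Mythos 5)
Your proposal follows exactly the paper's route: this is Nishiyama's method, with your rank-$6$ partner $T_0$ being the lattice $M=D_5\oplus A_1$ that the paper takes from Nishiyama's computations, the same correspondence between frames of elliptic fibrations and primitive embeddings of $M$ into Niemeier lattices, and the same recipe for reading off reducible fibers, Mordell--Weil rank and torsion from the orthogonal complement $W=(T_0)^{\perp}_N$ and the glue group $N/N_{\mathrm{root}}$. What your plan leaves implicit --- the explicit list of inequivalent embeddings, the computation of their orthogonal complements, and the glue-vector calculations that pin down each torsion group --- is precisely the case-by-case enumeration that constitutes the body of the paper's proof, so the two approaches coincide.
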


In the second part, i.e. sections 7 to 10, we first explain that $Y_2$ is the modular surface associated to the modular group $\Gamma_1(8)$. From one of its fibrations we deduce that it is the unique $K3$ surface $X$ over $\mathbb Q$ with N\'eron-Severi group of rank $20$ and discriminant $-8$, all of its classes of divisors being defined over $\mathbb Q$.

Then, for each fibration, we determine explicitly a Weierstrass model, with generators of the Mordell-Weil group. 

We first use the $8$-torsion sections of the modular fibration to construct the $16$ first fibrations. Their parameters belong to a special group generated by  $10$ functions on the surface. This construction is similar to the one developed for $\Gamma_1(7)$ by Harrache and Lecacheux in \cite{HL}. The next fibrations are obtained by classical methods of gluing and breaking singular fibers. The last ones are constructed by adding a vertex to the graph of the modular fibration.

The construction of some of the fibrations can be done also for the other $K3$ surfaces $Y_k$ of the family. Thus we hope to find for them fibrations of rank $0$ and perhaps obtain more easily the discriminant of the transcendental lattice for singular $K3$ members.

\section{Definitions}

An integral symmetric bilinear form or a lattice of rank $r$ is a free $\mathbb Z$-module $S$ of rank $r$ together with a symmetric bilinear form
$$
\begin{matrix}
b: & S\times S & \rightarrow & \mathbb Z\\
 & (x,x') & \mapsto &  b(x,x').
\end{matrix}
$$

Tensoring by $\mathbb R$ we get the real bilinear form $S_{\mathbb R}$ associated to $S$.

If $S$ is a non-degenerate lattice, we write the signature of $S$, $\hbox{sign}(S)=(t_+,t_-)$. An indefinite lattice of signature $(1,t_-)$ or $(t_+,1)$ is called an hyperbolic lattice.

A homomorphism of lattices $f:S \rightarrow S'$ is a homomorphism of the abelian groups such that $b'(f(x,.f(y))=b(x,y)$ for all $x,y \in S$.

An injective (resp. bijective) homomorphism of lattices is called an {\bf{embedding}} (resp. an isometry). The group of isometries of a lattice $S$ into itself is denoted by $O(S)$ and called the orthogonal group of $S$.

An embedding $i:S \rightarrow S'$ is called {\bf{primitive}} if $S'/i(S)$ is a free group.

A sublattice is a subgroup equipped with the induced bilinear form. A sublattice $S'$ of a lattice $S$ is called primitive if the identity map $S' \rightarrow S$ is a primitive embedding.
The {\bf{primitive closure}} of $S$ inside $S'$ is defined by
$$\overline{S} =\{x\in S' / mx\in S \hbox{  for some positive integer }m \}.$$

A lattice $M$ is an{\bf{ overlattice}} of $S$ if $S$ is a sublattice of $M$ such that the index $[M:S]$ is finite.

Two embeddings $i:S \rightarrow S'$ and $i':S \rightarrow S'$ are called isomorphic if there exists an isometry $\sigma \in O(S')$ such that $i'=\sigma \circ i$. 

By $S_1\oplus S_2$ we denote the orthogonal sum of two lattices defined in the standard way. We write $S^n$ for the orthogonal sum of $n$ copies of a lattice $S$. The {\bf{ orthogonal complement of a sublattice}} $S$ of a lattice $S'$ is defined in the usual way and is denoted by $(S)_{S'}^{\perp}$,
$$(S)_{S'}^{\perp}=\{x\in S'/ b(x,y)=0 \,\,\,\hbox{for all  }y\in S\}.$$

If $\underset{-}{e}=(e_1,\ldots, e_r)$ is a $\mathbb Z$-basis of a lattice $S$, then the matrix 
$G(\underset{-}{e})=(b(e_i,e_j))$ is called the Gram matrix of $S$ with respect to $\underset{-}{e}$.

For every integer $m$ we denote by $S[m]$ the lattice obtained from a lattice $S$ by multiplying the values of its bilinear form by $m$.

A lattice $S$ is called {\bf{even}} if $x^2:=b(x,x)$ is even for all $x$ from $S$. In this case the map $x \rightarrow x^2$ is a quadratic form on $S$ and from it we can recover the symmetric bilinear form on $S$.

For any integer $n$ we denote by $\langle n \rangle$ the lattice $\mathbb Z e$ where $e^2=n$.

\section{Discriminant forms}

Let $L$ be a non-degenerate lattice. 
The {\bf{dual lattice}} $L^*$ of $L$ is defined by
$$L^* =\{x\in L \otimes \mathbb Q /\,\,\, b(x,y)\in \mathbb Z \hbox{  for all }y \in L \}$$
Obviously $L^*$ is an overlattice of $L$ but its form takes non-integer values too. The canonical bilinear form on $L^*$ induced by $b$ is denoted by the same letter.

The {\bf{discriminant group $G_L$}} is defined by
$$G_L:=L^*/L.$$
This group is finite if and only if $L$ is non-degenerate. In the latter case, its order is equal to the absolute value of the lattice determinant $\mid \det (G(\underset{-}{e})) \mid$ for any basis $\underset{-}{e}$ of $L$.

A lattice $L$ is {\bf{unimodular}} if $G_L$ is trivial.

Let $G_L$ be the {\bf{discriminant group }} of a non-degenerate lattice $L$. The bilinear form of $L$ extends naturally to a $\mathbb Q$-valued symmetric bilinear form on $L^*$ and induces a symmetric bilinear form 
$$b_L: G_L \times G_L \rightarrow\mathbb Q / \mathbb Z.$$

If $L$ is even, then $b_L$ is the symmetric bilinear form associated to the quadratic form 
$$q_L: G_L \rightarrow \mathbb Q/2\mathbb Z$$
defined by
$$q_L(x+L)=x^2+2\mathbb Z.$$

The latter means that $q_L(na)=n^2q_L(a)$ for all $n\in \mathbb Z$, $a\in G_L$ and $b_L(a,a')=\frac {1}{2}(q_L(a+a')-q_L(a)-q_L(a'))$, for all $a,a' \in G_L$, where $\frac {1}{2}:\mathbb Q/2 \mathbb Z \rightarrow \mathbb Q / \mathbb Z$ is the natural isomorphism.

The pair{\bf{ $(G_L,b_L)$ (resp. $(G_L,q_L)$) is called the discriminant bilinear (resp. quadratic) form of $L$}}.

\section{Root lattices}

In this section we recall only what is needed for the understanding of the paper. For proofs and details one can refer to Bourbaki \cite{Bo} and Martinet \cite{Ma}.
 
Let $L$ be a negative-definite even lattice. We call $e\in L$ a root if $q_L(e)=-2$. Put $\Delta(L):=\{e\in L/ q_L(e)=-2\}$. Then the sublattice of $L$ spanned by $\Delta(L)$ is called the{\bf{ root type}} of $L$ and is denoted by {\bf{$L_{\hbox{root}}$}}.

If $e\in \Delta(L)$, then an isometry $R_e$ of $L$ is defined by
$$R_e(x)=x+b(x,e)e.$$

We call $R_e$ the reflection associated with $e$. The subgroup of $O(L)$ generated by $R_e$ ($e\in \Delta(L)$) is called the {\bf{Weyl group}} of $L$ and is denoted by $W(L)$.

\bigskip

The{\bf{ lattices $A_m$ ($m\geq 1$), $D_n$ ($n\geq 4$), $E_p$ ($p=6,7,8$) defined by the following Dynkin diagrams are called the root lattices}}: we use Bourbaki's definitions \cite{Bo}.

\begin{table}[h]
\begin{center}
\begin{tabular}{cc}
$A_n=\langle a_1,a_2, \ldots ,a_n \rangle $&
\begin{tabular}{c}  \begin{tikzpicture}[scale=0.7]%
\draw[fill=black](0.1,1.7039)circle (0.05cm)node [below] {$a_1$};
\draw[fill=black](2.9,1.7039)circle (0.05cm)node [below] {$a_2$};
\draw[fill=black](5.7,1.7039)circle (0.05cm)node [below] {$a_3$};
\draw[fill=black](9.9,1.7039)circle (0.05cm)node [below]{$a_n$};
\draw(0.1,1.7039)--(6.81,1.7039);
\draw[dashed](7.3,1.7039)--(8.6,1.7039);
\draw(9.01,1.7039)--(9.9,1.7039);
\end{tikzpicture} \end{tabular} \\
$D_l=\langle d_1,d_2, \ldots ,d_l \rangle $&
\,\begin{tabular}{c}  \begin{tikzpicture}[scale=0.7]%
\draw[fill=black](0.1,1.7039)circle (0.05cm)node [below] {$d_l$};
\draw[fill=black](2.9,1.7039)circle (0.05cm)node [below] {$d_{l-2}$};
\draw[fill=black](5.7,1.7039)circle (0.05cm)node [below] {$d_{l-3}$};
\draw[fill=black](9.9,1.7039)circle (0.05cm)node [below]{$d_1$};
\draw[fill=black](2.9,3.10)circle (0.05cm)node [above]{$d_{l-1}$};
\draw(0.1,1.7039)--(6.81,1.7039);
\draw[dashed](7.3,1.7039)--(8.6,1.7039);
\draw(9.01,1.7039)--(9.9,1.7039);
\draw(2.9,1.7039)--(2.9,3.10);
\end{tikzpicture} \end{tabular} \\
$E_p=\langle e_1,e_2, \ldots ,e_p \rangle $&
\begin{tabular}{c}  \begin{tikzpicture}[scale=0.7]%
\draw[fill=black](0.1,1.7039)circle (0.05cm)node [below] {$e_1$};
\,\,\draw[fill=black](2.9,1.7039)circle (0.05cm)node [below] {$e_{3}$};
\draw[fill=black](5.7,1.7039)circle (0.05cm)node [below] {$e_{4}$};
\draw[fill=black](9.9,1.7039)circle (0.05cm)node [below]{$e_p$};
\draw[fill=black](5.7,3.10)circle (0.05cm)node [above]{$e_{2}$};
\draw(0.1,1.7039)--(6.81,1.7039);
\draw[dashed](7.3,1.7039)--(8.6,1.7039);
\draw(9.01,1.7039)--(9.9,1.7039);
\draw(5.7,1.7039)--(5.7,3.10);
\end{tikzpicture} \end{tabular} \\
\end{tabular}

\end{center}
\end{table}

All the vertices $a_j$, $d_k$, $e_l$ are roots and two vertices $a_j$ and $a_j'$ are joined by a line if and only if $b(a_j,a_j')=1$.

Denote $\epsilon_i$ the vectors of the canonical basis of $\mathbb R^n$ with the usual scalar product.

\subsection{{\bf{$A_l^{*}/A_l$}}}

We can represent $A_l$ by the set of points in $\mathbb R^{l+1}$ with integer coordinates whose sum is zero.

Set $a_i=\epsilon_i-\epsilon_{i+1}$ and define
$${\alpha}_l=\epsilon_1-\frac {1}{l+1} \sum_{j=1}^{l+1} \epsilon_j=\frac {1}{l+1} \sum_{j=1}^{l}(l-j+1)a_j.$$

One can show that

$$A_l^*=\langle A_l,{\alpha_l} \rangle, \,\,\,\, A_l^*/A_l  \simeq \mathbb Z /(l+1)\mathbb Z \,\,\,\,\hbox{and}\,\,\,q_{A_l}(\alpha_l)=\left( -\frac {l}{l+1} \right).$$

\subsection{{\bf{$D_l^*/D_l$}}}
We can represent $D_l$ as the set of points of $\mathbb R^l$ with integer coordinates of even sum and define

$$
\begin{array}{lll}
{\delta}_{l} & = &\frac {1}{2}(\sum_{i=1}^l \epsilon_i)=\frac {1}{2}\left( \sum_{i=1}^{l-2}id_i+\frac {1}{2}(l-2)d_{l-1}+\frac {1}{2}ld_l \right)\\[15pt]
\overline{\delta}_l & = &\epsilon_1 =\sum_{i=1}^{l-2} d_i+\frac {1}{2}(d_{l-1}+d_l)\\[15pt]
\tilde{\delta}_{l} & = & \delta_l-\epsilon_l =\frac {1}{2}\left( \sum_{i=1}^{l-2}id_i+\frac {1}{2}ld_{l-1}+\frac {1}{2}(l-2)d_l  \right)
\end{array}
$$

One can show that
$$D_l^*= \langle \epsilon_1, \ldots , \epsilon_l, \delta_l \rangle.$$

Then, for $l$ odd

$$ D_l^* /D_l \simeq \mathbb Z /4 \mathbb Z =\langle {\delta_l} \rangle, \,\, \,\,\,\, \overline{\delta}_l\equiv 2 {\delta_l}\,\,\,\,\hbox{and}\,\,\,\,\,\tilde{\delta}_{l}\equiv 3 {\delta}_l\,\,\, \hbox{mod.}\,\,\,D_l$$
and for $l$ even
$$D_l^* /D_l \simeq \mathbb Z /2 \mathbb Z \times \mathbb Z /2 \mathbb Z,$$
the $3$ elements of order $2$ being the images of ${\delta}_l$, $\tilde{\delta}_{l}$ and $\overline{\delta}_l$.

Moreover,
$$q_{D_l}(\delta_l)=\left ( - \frac {l}{4} \right ), \,\,\,\,\,\,q_{D_l}(\overline{\delta}_l)=(-1), \,\,\,\,\,\,b_{D_l}(\delta_l,\overline{\delta}_l)=-\frac {1}{2}.$$

\subsection{{\bf{$E_6^*/E_6$}}}

We can represent $E_6$ as a lattice in $\mathbb R^8$ generated by the $6$ vectors $e_i$,

$$
e_1  =  \frac {1}{2}(\epsilon_1 +\epsilon _8)-\frac {1}{2} (\sum_{i=2}^{7}\epsilon_i),  \,\,\,\,e_2= \epsilon_1+\epsilon_2, \,\,\,\, e_i=\epsilon_{i-1}-\epsilon_{i-2}, \,\,\,\, 3\leq i \leq 6.$$

Denote

$$\eta_6 =-\frac {1}{3}(2e_1+3e_2+4e_3+6e_4+5e_5+4e_6).$$
Then

$$E_6^*=\langle E_6, \eta_6 \rangle , \,\,\,\,E_6^*/E_6\simeq \mathbb Z /3 \mathbb Z ,\,\,\,\, q_{E_6}(\eta_6)=\left( -\frac {4}{3} \right ).$$

\subsection{{\bf{$E_7^*/E_7$}}}

We can represent $E_7$ as a lattice in $\mathbb R^8$ generated by the $6$ previous vectors $e_i$ and $e_7=\epsilon_6 -\epsilon_5$.

Denote
$$\eta_7 =-\frac {1}{2}(2e_1+3e_2+4e_3+6e_4+5e_5+4e_6+3e_7),$$
then

$$E_7^*=\langle E_7, \eta_7 \rangle, \,\,\,\,E_7^*/E_7 \simeq \mathbb Z /2 \mathbb Z , \,\,\,\,q_{E_7}(\eta_7)=\left(-\frac {3}{2} \right).$$

\subsection{{\bf{$E_8^*/E_8$}}}

We can represent $E_8$ as the subset of points with coordinates $\xi_i$ satisfying
$$2\xi_i \in \mathbb Z, \,\,\,\,\xi_i-\xi_j \in \mathbb Z, \,\,\,\, \sum_{i=1}^{\infty} \xi_i \in 2 \mathbb Z.$$
Then $E_8^*=E_8$ and $E_8^*/E_8=(0)$.

\section{Elliptic fibrations}

Before giving a complete classification of the elliptic fibrations on the $K3$ surface $Y_2$, we recall briefly some useful facts concerning $K3$ surfaces. For more details see \cite{Ba} \cite{Y}.

\subsection{$K3$ surfaces and elliptic fibrations}

A $K3$ surface $X$ is a smooth projective complex surface with
\[
K_X=\mathcal O_X \,\,\,\,\, \text{and} \,\,\,\,\, H^1(X,\mathcal O_X)=0.
\]
If $X$ is a{\bf{ $K3$ surface}}, then $H^2(X,\mathbb{Z})$ is torsion free. With the cup product,  $H^2(X,\mathbb{Z})$
has the structure of an even lattice. By the Hodge index theorem it has signature $(3,19)$ and by Poincar\'e duality it is unimodular and  $H^2(X,\mathbb{Z})=U^3\oplus E_8(-1)^2.$

The {\bf{N\'eron-Severi group $NS(X)$}} (i.e. the group of line bundles modulo algebraic equivalence), with   the intersection pairing, is a lattice of signature $(1,\rho (X)-1)$, where $\rho(X)$ is the Picard number of $X$. The natural embedding $NS(X) \hookrightarrow H^2(X,\mathbb{Z})$ is a primitive embedding of lattices.

\bigskip

If $C$ is an smooth projective curve over an algebraically closed field $K$, an {\bf{elliptic surface}} $\Sigma$  
over $C$ is an smooth surface with a surjective morphism 
\[
f: \Sigma \rightarrow C
\]
such that almost all fibers are smooth curves of genus 1 and no fiber contains exceptional curves of the first kind.

The morphism $f$ defines an elliptic fibration on $\Sigma$.

We suppose also that every elliptic fibration has a section and so a Weierstrass form. Thus we can consider the generic fiber as an elliptic curve $E$ on $K(C)$ choosing a section as the zero section $\bar{O}$.
In the case of $K3$ surfaces, $C=\mathbb{P}^1$.

The singular fibers were classified by N\'eron \cite{Ne} and Kodaira \cite{Ko}. They are union of irreductible components with multiplicities; each component is a smooth rational curve with self-intersection $-2$.
The singular fibers are classified in the following Kodaira types:
\begin{itemize}
\item
two infinite series $I_n (n>1)$ and $I_n^* (n \geq 0)$ 
\item
five types $III,IV,II^*,I
II^*,IV^*$.
\end{itemize}
The dual graph of these components (a vertex for each component, an edge for each intersection point of two components)  is an extended Dynkin diagram of type $\tilde{A}_n$, $\tilde{D}_l$, $\tilde{E}_p$. Deleting the zero component (i.e. the component meeting the zero section) gives the Dynkin diagram graph $A_n$, $D_l$, $E_p$.   
 We draw the most useful diagrams, with the multiplicity of the components, the zero component being represented by a circle.

\begin{table}[h]
\begin{center}
\begin{tabular}{ccc}
 
 \begin{tabular}{c}  \begin{tikzpicture}[scale=0.4]%
\draw[fill=black](0.1,1.7039)circle (0.09cm)node [below] {$1$};
\draw[fill=black](2.9,1.7039)circle (0.09cm)node [below] {$1$};
\draw[fill=black](5.7,1.7039)circle (0.09cm)node [below] {$1$};
\draw[fill=black](9.9,1.7039)circle (0.09cm)node [below]{$1$};
\draw(0.1,1.7039)--(5.9,1.7039);
\draw[dashed](5.9,1.7039)--(9.7,1.7039);
\draw(9.7,1.7039)--(9.9,1.7039);
\draw(0.1,1.7039)--(5.5,3.3);
\draw(5.5,3.3)--(9.9,1.7039);
\draw(5.5,3.3) circle (0.2cm)node[above]{$1$};
\draw (5,-1)node [above]{$\tilde{A}_n (I_{n+1})$};
\end{tikzpicture} \end{tabular}
 & & 
 \begin{tabular}{c}  \begin{tikzpicture}[scale=0.4]%
\draw[fill=black](0.1,1.7039)circle (0.09cm)node [below] {$1$};
\draw[fill=black](2.9,1.7039)circle (0.09cm)node [below] {$2$};
\draw[fill=black](7,1.7039)circle (0.09cm)node [below] {$2$};
\draw[fill=black](9.9,1.7039)circle (0.09cm)node [below]{$1$};
\draw(0.1,1.7039)--(3.3,1.7039);
\draw[dashed](3.3,1.7039)--(6.01,1.7039);
\draw(6.1,1.7039)--(9.9,1.7039);
\draw(2.9,1.7039)--(2.9,3.3);
\draw[fill=black](2.9,3.3) circle (0.09cm)node[left] {$1$};
\draw(7,3.3)--(7,1.7039);
\draw(7,3.3) circle (0.2cm)node[left]{$1$};
\draw(5,-1) node[above]{$\tilde{D}_l (I_{l-4}^*)$};
\end{tikzpicture} \end{tabular}
  \end{tabular}

\begin{tabular}{lll}
 
 \begin{tabular}{c}  \begin{tikzpicture}[scale=0.4]%
\draw[fill=black](0.1,1.7039)circle (0.09cm)node [below] {$1$};
\draw[fill=black](1.1,1.7039)circle (0.09cm)node [below] {$2$};
\draw[fill=black](2.1,1.7039)circle (0.09cm)node [below] {$3$};
\draw[fill=black](3.1,1.7039)circle (0.09cm)node [below] {$2$};
\draw[fill=black](4.1,1.7039)circle (0.09cm)node [below] {$1$};
\draw[fill=black](2.1,2.7039)circle (0.09cm)node [left] {$2$};
\draw(2.1,3.7039)circle (0.20cm)node [left] {$1$};
\draw(0.1,1.7039)--(4.1,1.7039);
\draw(2.1,1.7039)--(2.1,3.7039);
\draw (2,-1) node[above]{$\tilde{E}_6 (IV^*)$};
\end{tikzpicture} \end{tabular}&

\begin{tabular}{c}  \begin{tikzpicture}[scale=0.4]%
\draw(0.1,1.7039)circle (0.2cm)node [below] {$1$};
\draw[fill=black](1.1,1.7039)circle (0.09cm)node [below] {$2$};
\draw[fill=black](2.1,1.7039)circle (0.09cm)node [below] {$3$};
\draw[fill=black](3.1,1.7039)circle (0.09cm)node [below] {$4$};
\draw[fill=black](4.1,1.7039)circle (0.09cm)node [below] {$3$};
\draw[fill=black](5.1,1.7039)circle (0.09cm)node [below] {$2$};
\draw[fill=black](6.1,1.7039)circle (0.09cm)node [below] {$1$};
\draw[fill=black](3.1,2.7039)circle (0.09cm)node [right] {$2$};
\draw(0.1,1.7039)--(6.1,1.7039);
\draw(3.1,1.7039)--(3.1,2.7039);
\draw(3.1,-1)node[above]{$\tilde{E_7} (III^*)$};
\end{tikzpicture} \end{tabular}&
 
\begin{tabular}{c}  \begin{tikzpicture}[scale=0.4]%
\draw[fill=black](0.1,1.7039)circle (0.09cm)node [below] {$2$};
\draw[fill=black](1.1,1.7039)circle (0.09cm)node [below] {$4$};
\draw[fill=black](2.1,1.7039)circle (0.09cm)node [below] {$6$};
\draw[fill=black](3.1,1.7039)circle (0.09cm)node [below] {$5$};
\draw[fill=black](4.1,1.7039)circle (0.09cm)node [below] {$4$};
\draw[fill=black](5.1,1.7039)circle (0.09cm)node [below] {$3$};
\draw[fill=black](6.1,1.7039)circle (0.09cm)node [below] {$2$};
\draw(7.1,1.7039)circle (0.2cm)node [below] {$1$};
\draw[fill=black](2.1,2.7039)circle (0.09cm)node [right] {$3$};
\draw(0.1,1.7039)--(7.1,1.7039);
\draw(2.1,1.7039)--(2.1,2.7039);
\draw(4.1,-1) node [above]{$\tilde{E_8} (II^*)$};
\end{tikzpicture} \end{tabular}

 \end{tabular}
 \end{center}
 \end{table}

The {\bf{trivial lattice $T(X)$}} is the subgroup of the N\'eron-Severi group generated by the zero section and the fibers components.  
More precisely, the trivial lattice is the orthogonal sum 
\[
T(X)=<\bar{O},F>\oplus_{v \in S} T_v
\]

where $\bar{O}$ denotes the zero section, $F$ the general fiber, $S$ the points of $C$ corresponding to the reducible singular fibers and $T_v$ the lattice generated by the fiber components except the zero component. 
 
From this formula we can compute the determinant of $T(X)$.

From Shioda's results on height pairing \cite{Shio} we can define a positive-definite lattice structure on the Mordell-Weil lattice $MWL(X):=E(K(C))/E(K(C))_{tor}$ and get the following proposition.

\begin{proposition}
Let $X$ be a $K3$ surface or more generally any elliptic surface with section. 
We have the relation
\[
|disc(NS(X))| = disc(T(X)) disc( MWL(X))/|E(K)_{tor}|^2.
\]
\end{proposition}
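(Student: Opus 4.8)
The plan is to run the Shioda--Tate argument and then convert the resulting index data into a discriminant identity by a single Schur-complement computation. First I would invoke the Shioda--Tate exact sequence
\[
0 \longrightarrow T(X) \longrightarrow NS(X) \xrightarrow{\ \psi\ } E(K) \longrightarrow 0 ,
\]
where $\psi$ sends a divisor class to the point it cuts out on the generic fibre and whose kernel is exactly the trivial lattice $T(X)=\langle\bar O,F\rangle\oplus\bigoplus_{v\in S}T_v$. In particular $\operatorname{rank} NS(X)=\operatorname{rank} T(X)+r$, where $r$ is the Mordell--Weil rank.

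Next I would choose sections $Q_1,\dots,Q_r$ whose images form a $\mathbb Z$-basis of a free complement of $E(K)_{tor}$ in $E(K)$, and set $N':=T(X)\oplus\bigoplus_i\mathbb Z Q_i$. Since the $Q_i$ map to independent non-torsion classes, the sum is direct and $N'$ is a full-rank sublattice of $NS(X)$ with $N'/T(X)$ a free complement of $E(K)_{tor}$ in $E(K)\cong NS(X)/T(X)$; hence $[NS(X):N']=|E(K)_{tor}|$. The elementary scaling of discriminants under a finite-index inclusion then gives
\[
\mathrm{disc}(N')=[NS(X):N']^{2}\,|\mathrm{disc}(NS(X))|=|E(K)_{tor}|^{2}\,|\mathrm{disc}(NS(X))| .
\]

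It remains to compute $\mathrm{disc}(N')$ in terms of $T(X)$ and $MWL(X)$. Writing the Gram matrix of $N'$ in the basis (a basis of $T(X)$)$\,\cup\,\{Q_i\}$ as a block matrix $\left(\begin{smallmatrix}G(T)&B\\ B^{t}&C\end{smallmatrix}\right)$, with $B_{ji}=t_j\cdot Q_i$ and $C_{ii'}=Q_i\cdot Q_{i'}$, and taking the Schur complement I obtain $\det G(N')=\det G(T)\cdot\det\bigl(C-B^{t}G(T)^{-1}B\bigr)$. The crucial point is that $C-B^{t}G(T)^{-1}B$ is precisely the Gram matrix of the orthogonal projections $\bar Q_i$ of the $Q_i$ onto $(T(X)\otimes\mathbb Q)^{\perp}$, and by Shioda's definition of the height pairing \cite{Shio} one has $\bar Q_i\cdot\bar Q_j=-\langle Q_i,Q_j\rangle$, so that this block equals $-1$ times the Gram matrix of $MWL(X)$ in the chosen basis. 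Since $(T(X))^{\perp}$ is negative definite of rank $r$ while $MWL(X)$ is positive definite, the accompanying sign $(-1)^{r}$ matches the signature of $N'$ and disappears on taking absolute values, giving $\mathrm{disc}(N')=\mathrm{disc}(T(X))\cdot\mathrm{disc}(MWL(X))$. Combining the two expressions for $\mathrm{disc}(N')$ yields the stated formula.

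The main obstacle is the identification of the Schur complement with the height-pairing Gram matrix: this rests on Shioda's theorem that the height pairing is computed by orthogonal projection against $T(X)$ in $NS(X)\otimes\mathbb Q$, the point being that $\langle\bar O,F\rangle$ is a unimodular hyperbolic plane, so projecting $(Q_i)$ against all of $T(X)$ automatically absorbs both the zero-section/fibre part and the reducible-fibre corrections that enter the explicit contribution formula. Everything else — the exactness of the Shioda--Tate sequence, the computation $[NS(X):N']=|E(K)_{tor}|$, and the index-squared scaling of discriminants — is routine once that identification is in place.
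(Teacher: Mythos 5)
Your proof is correct, up to a harmless conflation of signed and unsigned discriminants in the intermediate steps (e.g.\ $\mathrm{disc}(N')=[NS(X):N']^{2}\,|\mathrm{disc}(NS(X))|$ should carry the sign of $\mathrm{disc}(NS(X))$), which your final passage to absolute values repairs. The paper offers no proof of this proposition at all---it records it as a direct consequence of Shioda's height-pairing results \cite{Shio}---and your Shioda--Tate sequence plus Schur-complement computation is exactly the standard argument behind that citation, so the two approaches coincide.
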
  

\bigskip

Moreover since $X$ is a $K3$ surface, the zero section has self-intersection $\bar{O}^2=-\chi (X)=-2$. Hence the zero section $\bar{O}$  and the general fiber $F$ generate an even unimodular lattice, called the hyperbolic plane $U$. 

The trivial lattice $T(X)$ of an elliptic surface is not always primitive in $NS(X)$. Its primitive closure $\overline{T(X)}$ is obtained by adding the torsion sections.

The N\'eron-Severi lattice $NS(X)$ always contains an even sublattice of corank two, the {\bf{ frame $W(X)$}}
$$W(X)=\langle \bar{O},F \rangle ^{\perp} \subset NS(X).$$

\begin{lem}
For any elliptic surface $X$ with section, the frame $W(X)$ is a negative-definite even lattice of rank $\rho (X) -2$.
\end{lem}

Hence, the N\'eron-Severi lattice of a $K3$ surface is an even lattice.

One can read off, the Mordell-Weil lattice, the torsion in the Mordell-Weil group MW and the type of singular fibers from $W(X)$ by

$$MWL(X)=W(X)/{\overline{W(X})_{\hbox{root}}}\,\,\,\,\,\,(MW)_{\hbox{tors}}=\overline{W(X)}_{\hbox{root}}/W(X)_{\hbox{root}}$$ 
$$T(X)=U\oplus W(X)_{\hbox{root}}.$$

We can also calculate the heights of points from the Weierstrass equation \cite{Ku} and test if points generate the Mordell-Weil group, since $disc (NS(X))$ is independant of the fibration.

\subsection{Nikulin and Niemeier's results}

\begin{lem}(Nikulin \cite{Nik}, Proposition 1.4.1)
Let $L$ be an even lattice. Then, for an even overlattice $M$ of $L$, we have a subgroup $M/L$ of $G_L=L^*/L$ such that $q_L$ is trivial on $M/L$. This determines a bijective correspondence between even overlattices of $L$ and subgroups $G$ of $G_L$ such that $q_L\mid_G=0$.
\end{lem}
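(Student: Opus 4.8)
The plan is to construct the two assignments explicitly and verify they are mutually inverse. In one direction I start from an even overlattice $M$ of $L$ and produce an isotropic subgroup of $G_L$; in the other I start from a subgroup $G\subseteq G_L$ with $q_L|_G=0$ and reconstruct an overlattice.

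First I would show that every even overlattice $M$ of $L$ satisfies $L\subseteq M\subseteq L^*$. Since $M$ is even it is in particular integral, so $M\subseteq M^*$; and since $L\subseteq M$, any vector of $M\otimes\mathbb{Q}$ that pairs integrally with all of $M$ pairs integrally with all of $L$, whence $M^*\subseteq L^*$. Combining gives $M\subseteq L^*$, so $M/L$ is a well-defined subgroup of $G_L=L^*/L$. For the isotropy, note that for $x\in M$ the class $\overline{x}=x+L$ satisfies $q_L(\overline{x})=x^2+2\mathbb{Z}$ by definition of $q_L$, and this vanishes in $\mathbb{Q}/2\mathbb{Z}$ precisely because $M$ is even. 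Hence $q_L|_{M/L}=0$, so the map $M\mapsto M/L$ indeed lands among the isotropic subgroups of $G_L$.

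Conversely, given a subgroup $G\subseteq G_L$ with $q_L|_G=0$, I would set $M:=\pi^{-1}(G)$, where $\pi:L^*\to G_L$ is the canonical projection. Then $L\subseteq M\subseteq L^*$ with $M/L=G$, and $M$ has full rank since $G$ is finite. It remains to check that $M$ is an even lattice. Evenness of the quadratic form is immediate: for $x\in M$ with class $\overline{x}\in G$ one has $x^2+2\mathbb{Z}=q_L(\overline{x})=0$, so $x^2\in 2\mathbb{Z}$. The more delicate point is integrality of the pairing: for $x,y\in M$ with classes $\overline{x},\overline{y}\in G$, the identity $b_L(\overline{x},\overline{y})=\frac{1}{2}\bigl(q_L(\overline{x}+\overline{y})-q_L(\overline{x})-q_L(\overline{y})\bigr)$, combined with $\overline{x}+\overline{y}\in G$ (using that $G$ is closed under addition), forces $b_L(\overline{x},\overline{y})=0$ in $\mathbb{Q}/\mathbb{Z}$, i.e.\ $b(x,y)\in\mathbb{Z}$. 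Thus $M$ is integral, hence an even overlattice of $L$. That the two assignments are mutually inverse is then formal, since $\pi^{-1}(M/L)=M$ and $\pi^{-1}(G)/L=G$ by definition of $\pi$.

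The hard part will be the integrality step in the converse direction: it is what guarantees that the correspondence lands in genuine lattices rather than in arbitrary rational submodules, and it crucially exploits that $G$ is a subgroup, so that the polarization identity may be applied to $\overline{x}+\overline{y}\in G$. The containment $M\subseteq L^*$ in the forward direction is the other point requiring care, since it is exactly what makes $M/L$ a subgroup of $G_L$ in the first place.
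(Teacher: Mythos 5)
Your proof is correct and complete: both directions are verified (the containment $M\subseteq L^*$ via $M\subseteq M^*\subseteq L^*$, evenness giving isotropy, and conversely isotropy of the subgroup $G$ giving evenness and, via the polarization identity $b_L(\overline{x},\overline{y})=\frac{1}{2}\bigl(q_L(\overline{x}+\overline{y})-q_L(\overline{x})-q_L(\overline{y})\bigr)$, integrality of $\pi^{-1}(G)$), and the two assignments are inverse for formal reasons. The paper itself offers no proof of this lemma — it is quoted directly from Nikulin's Proposition 1.4.1 — and your argument is precisely the standard one given in that source, so there is nothing to reconcile.
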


\begin{lem}(Nikulin \cite{Nik}, Proposition 1.6.1)
Let $L$ be an even unimodular lattice and $T$ a primitive sublattice. Then we have
$$G_T\simeq G_{T^{\perp}} \simeq L/(T\oplus T^{\perp}),\,\,\,\,\,\,\,q_{T^{\perp}}=-q_T.$$
In particular, $ \det T =\det  T^{\perp} =[L:T\oplus T^{\perp}]$.
\end{lem}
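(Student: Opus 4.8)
The plan is to construct an explicit isomorphism out of orthogonal projection and then read off the form statement from the evenness of $L$. Since $L$ is non-degenerate and $T$ is primitive, both $T$ and $T^{\perp}$ are non-degenerate, so $L\otimes\mathbb{Q}=(T\otimes\mathbb{Q})\oplus(T^{\perp}\otimes\mathbb{Q})$ splits orthogonally. Let $p:L\to T^*$ send $x$ to its orthogonal projection onto $T\otimes\mathbb{Q}$, i.e. the unique $y\in T\otimes\mathbb{Q}$ with $b(y,t)=b(x,t)$ for all $t\in T$; this $y$ lies in $T^*$ because $t\mapsto b(x,t)$ is $\mathbb{Z}$-valued on $T$. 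First I would identify the kernel and image of $p$. The kernel is exactly $\{x\in L:b(x,t)=0\ \forall t\in T\}=T^{\perp}$. For surjectivity, take $\phi\in T^*$ and regard it via $b(\phi,\cdot)$ as a homomorphism $T\to\mathbb{Z}$; since $T$ is primitive, $L/T$ is free, so this functional extends to a homomorphism $L\to\mathbb{Z}$, which by unimodularity of $L$ is of the form $b(x,\cdot)$ for some $x\in L$, and then $p(x)=\phi$. Hence $p$ induces an isomorphism $L/T^{\perp}\simeq T^*$.

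Next I would pass to discriminant groups. Since $p(T)=T$, the isomorphism $L/T^{\perp}\simeq T^*$ carries the image of $T\oplus T^{\perp}$ onto $T$, so it descends to $L/(T\oplus T^{\perp})\simeq T^*/T=G_T$. Running the same argument with the roles of $T$ and $T^{\perp}$ interchanged gives $L/(T\oplus T^{\perp})\simeq G_{T^{\perp}}$, and composing yields the asserted group isomorphisms $G_T\simeq G_{T^{\perp}}\simeq L/(T\oplus T^{\perp})$. Comparing orders, $|G_T|=[T^*:T]=|\det T|$ and likewise for $T^{\perp}$, while $|L/(T\oplus T^{\perp})|=[L:T\oplus T^{\perp}]$, which gives $\det T=\det T^{\perp}=[L:T\oplus T^{\perp}]$.

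Finally I would verify $q_{T^{\perp}}=-q_T$. Tracing the isomorphism $\gamma:G_T\to G_{T^{\perp}}$ through these identifications, $\gamma$ sends the class of $a\in T^*$ to the class of $a'\in(T^{\perp})^*$, where $a=p(x)$ and $a'$ are the two orthogonal components of a vector $x=a+a'\in L$; well-definedness follows exactly as in the kernel computation, since $\bar a=0$ forces $a\in T$ and hence $a'=x-a\in L\cap(T^{\perp}\otimes\mathbb{Q})=T^{\perp}$. Because the decomposition is orthogonal, $x^2=a^2+a'^2$, and because $L$ is even, $x^2\in 2\mathbb{Z}$, whence $a^2\equiv -a'^2\pmod{2\mathbb{Z}}$. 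By the definition of the discriminant quadratic forms this reads $q_T(\bar a)=-q_{T^{\perp}}(\gamma(\bar a))$, i.e. $q_{T^{\perp}}=-q_T$ under the identification $\gamma$, as claimed.

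The step I expect to be the main obstacle is the surjectivity of $p$ onto the full dual $T^*$: it is precisely here that both hypotheses are indispensable, primitivity of $T$ to extend a functional from $T$ to all of $L$, and unimodularity of $L$ to realize that extended functional as pairing against an actual lattice vector. Everything else is bookkeeping with the orthogonal splitting and the even condition.
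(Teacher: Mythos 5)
The paper offers no proof of this lemma at all: it is quoted directly from Nikulin (Proposition 1.6.1), so there is no in-paper argument to compare yours against, and what you wrote is essentially the standard proof from the literature. Its core is correct: the orthogonal projection $p\colon L\to T^*$, the identification $\ker p=T^{\perp}$, surjectivity of $p$ obtained by extending a functional from $T$ to $L$ (primitivity of $T$) and then representing it by a lattice vector (unimodularity of $L$), the induced isomorphisms $G_T\simeq L/(T\oplus T^{\perp})\simeq G_{T^{\perp}}$, and the evenness computation $x^2=a^2+a'^2\in 2\mathbb{Z}$ giving $q_{T^{\perp}}\circ\gamma=-q_T$. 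You also locate correctly where each hypothesis does real work.

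Two repairs are needed. First, your opening claim that $T$ and $T^{\perp}$ are non-degenerate ``since $L$ is non-degenerate and $T$ is primitive'' is false: in the hyperbolic plane $U$, an isotropic generator spans a primitive but totally degenerate sublattice $T=\mathbb{Z}e$, with $T^{\perp}=T$, so $T\oplus T^{\perp}$ is not even a sublattice of $L$ and $G_T$ is infinite. Non-degeneracy of $T$ cannot be derived from primitivity; it is an implicit hypothesis of the statement (it is what makes $G_T$ finite, $\det T$ meaningful, and $T\cap T^{\perp}=0$), and Nikulin assumes it explicitly. Once it is assumed, non-degeneracy of $T^{\perp}$ and the orthogonal splitting of $L\otimes\mathbb{Q}$ do follow. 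Second, ``running the same argument with the roles of $T$ and $T^{\perp}$ interchanged'' silently uses two facts you did not record: that $T^{\perp}$ is primitive in $L$ (true for any orthogonal complement; indeed your own kernel computation embeds $L/T^{\perp}$ into the torsion-free group $T^*$), and that $(T^{\perp})^{\perp}=T$, which is exactly where primitivity of $T$ is needed a second time, since $(T^{\perp})^{\perp}=L\cap(T\otimes\mathbb{Q})$ is the primitive closure of $T$. With these two adjustments your proof is complete; note finally that the determinant equality should be read in absolute value, $|\det T|=|\det T^{\perp}|=[L:T\oplus T^{\perp}]$, which is precisely what your order count yields.
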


\begin{theo} (Nikulin \cite{Nik} Corollary 1.6.2)
Let $L$ and $M$ be even non-degenerate integral lattices such that
$$G_L \simeq G_M,\,\,\,\,\,\,\,q_L=-q_M.$$
Then there exists an unimodular overlattice $N$ of $L\oplus M$ such that

1) the embeddings of $L$ and $M$ in $N$ are primitive

2) $L_N^{\perp}=M$ and $M_N^{\perp}=L$.
\end{theo}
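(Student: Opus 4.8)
The plan is to realise $N$ by the classical ``graph gluing'' construction and to read off every required property from the discriminant form of $L\oplus M$. By hypothesis there is a group isomorphism $\gamma:G_L\to G_M$ which is an anti-isometry of the discriminant quadratic forms, i.e. $q_M(\gamma(x))=-q_L(x)$ for all $x\in G_L$. Recall that the discriminant group of an orthogonal sum is $G_{L\oplus M}=G_L\oplus G_M$ with form $q_{L\oplus M}=q_L\oplus q_M$. First I would set
$$H=\{\,(x,\gamma(x))\ :\ x\in G_L\,\}\subset G_L\oplus G_M,$$
the graph of $\gamma$, and check that $H$ is isotropic for $q_{L\oplus M}$: for $x\in G_L$,
$$q_{L\oplus M}(x,\gamma(x))=q_L(x)+q_M(\gamma(x))=q_L(x)-q_L(x)=0.$$

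By the overlattice correspondence (Nikulin \cite{Nik}, Proposition 1.4.1, recalled above), $H$ then determines an even overlattice $N$ of $L\oplus M$ with $N/(L\oplus M)\simeq H$. To see that $N$ is unimodular I would argue by an order count: since the bilinear form $b_{L\oplus M}$ on $G_{L\oplus M}$ is non-degenerate, the orthogonal complement $H^{\perp}$ in the discriminant group satisfies $|H|\,|H^{\perp}|=|G_{L\oplus M}|=|G_L|\,|G_M|$. As $\gamma$ is an isomorphism, $|H|=|G_L|=|G_M|$, so $|H^{\perp}|=|H|$; combined with the isotropy $H\subseteq H^{\perp}$ this forces $H=H^{\perp}$, whence the discriminant group $G_N=H^{\perp}/H$ is trivial and $N$ is unimodular.

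It then remains to identify the orthogonal complements. Since $L$ and $M$ are orthogonal inside $N\otimes\mathbb{Q}=(L\otimes\mathbb{Q})\oplus(M\otimes\mathbb{Q})$ and both are non-degenerate, one gets $(L)_N^{\perp}=N\cap(M\otimes\mathbb{Q})$ and $(M)_N^{\perp}=N\cap(L\otimes\mathbb{Q})$. A vector $v\in N\cap(M\otimes\mathbb{Q})$ lies in $0\oplus M^{*}$, so its class in $G_{L\oplus M}$ has the form $(0,\bar v)$; belonging to $N$ means this class lies in $H$, but the only element of the graph $H$ with vanishing first coordinate is $(0,0)$, whence $\bar v=0$ and $v\in M$. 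This yields $(L)_N^{\perp}=M$, and the symmetric argument (vanishing second coordinate, using injectivity of $\gamma$) gives $(M)_N^{\perp}=L$. In particular each of $L$ and $M$ is the orthogonal complement of the other, hence a primitive sublattice of $N$, which establishes both 1) and 2).

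The construction itself is short; the step needing the most care is the verification that the isotropic graph $H$ is in fact maximal (Lagrangian), i.e. the equality $H=H^{\perp}$. This is exactly where the two hypotheses are used \emph{simultaneously}: the isomorphism $G_L\simeq G_M$ gives $|H|=|G_L|=|G_M|$ so that the order count closes up, while the anti-isometry $q_L=-q_M$ gives the isotropy $H\subseteq H^{\perp}$. Without both, $H$ would only be isotropic, producing merely an even overlattice rather than a unimodular one.
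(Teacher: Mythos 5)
Your proof is correct and complete. The paper itself offers no argument for this statement --- it is quoted directly from Nikulin's Corollary 1.6.2 --- and your construction (the graph $H$ of the anti-isometry $\gamma$, shown isotropic, fed into the overlattice correspondence of Lemma 2, with the order count $|H|=|H^{\perp}|$ forcing unimodularity and the vanishing-coordinate argument identifying $(L)_N^{\perp}=M$, $(M)_N^{\perp}=L$ and hence primitivity) is exactly the standard proof, i.e.\ Nikulin's own, so there is nothing to add.
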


\begin{theo}(Nikulin \cite{Nik} Theorem 1.12.4 )
Let there be given two pairs of nonnegative integers, $(t_{(+)},t_{(-)})$ and  $(l_{(+)},l_{(-)})$. The following properties are equivalent:

a) every even lattice of signature  $(t_{(+)},t_{(-)})$ admits a primitive embedding into some even unimodular lattice of signature  $(l_{(+)},l_{(-)})$;

b)  $l_{(+)}-l_{(-)} \equiv 0 \,\, (\hbox{mod}\,\, 8)$, $t_{(+)} \leq l_{(+)}$, $t_{(-)} \leq l_{(-)}$ and  $t_{(+)}+t_{(-)} \leq \frac {1}{2}(l_{(+)}+l_{(-)})$.

\end{theo}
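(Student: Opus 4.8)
The plan is to establish the two implications separately, reducing the substantive direction (b)$\Rightarrow$(a) to the gluing construction of Nikulin's Corollary 1.6.2 recalled above, and the necessity (a)$\Rightarrow$(b) to elementary length bounds for discriminant groups. Throughout I write $\ell(G)$ for the minimal number of generators of a finite abelian group $G$, and I record two facts used repeatedly: for any non-degenerate lattice $S$ one has $\ell(G_S)\le\operatorname{rank}(S)$, since $G_S=S^*/S$ is a quotient of $S^*$, a group of rank $\operatorname{rank}(S)$; and, by Milgram's formula, every even lattice $S$ satisfies $\operatorname{sign}(S)\equiv\operatorname{sign}(q_S)\pmod 8$, where $\operatorname{sign}(q_S)\in\mathbb Z/8\mathbb Z$ is read off from the Gauss sum of $q_S$ and satisfies $\operatorname{sign}(-q_S)\equiv-\operatorname{sign}(q_S)$.

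For (b)$\Rightarrow$(a), I start from an arbitrary even lattice $T$ of signature $(t_{(+)},t_{(-)})$ with discriminant form $(G_T,q_T)$, and I build the intended orthogonal complement first. The goal is an even lattice $M$ of signature $(l_{(+)}-t_{(+)},\,l_{(-)}-t_{(-)})$ whose discriminant form is $(G_T,-q_T)$. Granting $M$, we have $G_T\simeq G_M$ and $q_T=-q_M$, so Nikulin's Corollary 1.6.2 produces a unimodular overlattice $N$ of $T\oplus M$ (even, since $T\oplus M$ is even and the gluing subgroup is isotropic) in which $T$ embeds primitively with orthogonal complement $M$; its signature is $(t_{(+)}+(l_{(+)}-t_{(+)}),\,t_{(-)}+(l_{(-)}-t_{(-)}))=(l_{(+)},l_{(-)})$, exactly the required target. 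Thus everything reduces to the existence of $M$, which I would obtain from the existence theorem for even lattices of prescribed signature and discriminant form. Its hypotheses are met precisely because of (b): the two components $l_{(\pm)}-t_{(\pm)}$ are nonnegative by $t_{(\pm)}\le l_{(\pm)}$; the length bound holds because $\operatorname{rank}(M)=(l_{(+)}+l_{(-)})-(t_{(+)}+t_{(-)})\ge t_{(+)}+t_{(-)}\ge\ell(G_T)$, the first inequality being the half-rank condition $t_{(+)}+t_{(-)}\le\frac12(l_{(+)}+l_{(-)})$; and the mod $8$ compatibility holds since $\operatorname{sign}(-q_T)\equiv-(t_{(+)}-t_{(-)})$ while the target signature difference is $(l_{(+)}-t_{(+)})-(l_{(-)}-t_{(-)})=(l_{(+)}-l_{(-)})-(t_{(+)}-t_{(-)})\equiv-(t_{(+)}-t_{(-)})\pmod 8$, the two agreeing because $l_{(+)}-l_{(-)}\equiv0\pmod 8$.

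For (a)$\Rightarrow$(b), I first note that (a) is non-vacuous (even lattices of signature $(t_{(+)},t_{(-)})$ exist, e.g.\ $\langle 2\rangle^{t_{(+)}}\oplus\langle-2\rangle^{t_{(-)}}$), so it forces an even unimodular lattice $L$ of signature $(l_{(+)},l_{(-)})$ to exist; by Milgram with trivial discriminant form this gives $l_{(+)}-l_{(-)}\equiv0\pmod 8$. Next, any primitive embedding of an even $T$ of signature $(t_{(+)},t_{(-)})$ into such an $L$ has orthogonal complement $T^\perp$ of signature $(l_{(+)}-t_{(+)},\,l_{(-)}-t_{(-)})$, and both components of a signature are nonnegative, whence $t_{(+)}\le l_{(+)}$ and $t_{(-)}\le l_{(-)}$. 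Only the half-rank inequality needs a tailored test lattice: I take $T=\langle 2\rangle^{t_{(+)}}\oplus\langle-2\rangle^{t_{(-)}}$, for which $G_T\simeq(\mathbb Z/2\mathbb Z)^{t_{(+)}+t_{(-)}}$ has maximal length $\ell(G_T)=t_{(+)}+t_{(-)}$. By Nikulin's Proposition 1.6.1 the complement satisfies $q_{T^\perp}=-q_T$, so $\ell(G_{T^\perp})=t_{(+)}+t_{(-)}$, and combining with $\ell(G_{T^\perp})\le\operatorname{rank}(T^\perp)=(l_{(+)}+l_{(-)})-(t_{(+)}+t_{(-)})$ yields $t_{(+)}+t_{(-)}\le\frac12(l_{(+)}+l_{(-)})$, completing (b).

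The main obstacle is the one result I must borrow from outside the excerpt: the theorem guaranteeing an even lattice of prescribed signature $(s_{(+)},s_{(-)})$ and prescribed discriminant form $(G,q)$, subject exactly to nonnegativity of the signature, the Milgram congruence $s_{(+)}-s_{(-)}\equiv\operatorname{sign}(q)\pmod 8$, and the length bound $s_{(+)}+s_{(-)}\ge\ell(G)$. Its proof is genuinely arithmetic rather than formal: one solves the realization problem $p$-adically over each $\mathbb Z_p$ (the length bound being what makes the $p$-elementary pieces realizable), and then patches the local data into a global genus, using the Gauss-sum constraint to pin the signature modulo $8$. By contrast, the gluing step and all the inequalities above are purely formal consequences of the discriminant-form formalism recalled in the preceding sections.
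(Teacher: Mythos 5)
A preliminary remark: the paper contains no proof of this statement at all --- it is imported verbatim from Nikulin \cite{Nik} (Theorem 1.12.4) and used as a black box in Nishiyama's method --- so your proposal has to be judged on its own merits rather than against an argument in the text. Your direction (a)$\Rightarrow$(b) is correct and complete: the test lattice $\langle 2\rangle^{t_{(+)}}\oplus\langle -2\rangle^{t_{(-)}}$, the additivity of signatures for $T\oplus T^{\perp}\subset L$, and the bound $\ell(G_{T^{\perp}})=\ell(G_T)=t_{(+)}+t_{(-)}\le \mathrm{rank}(T^{\perp})$ obtained from Lemma 3 (Nikulin's Proposition 1.6.1) do exactly what is needed. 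The skeleton of (b)$\Rightarrow$(a) --- produce an even lattice $M$ with invariants $(l_{(+)}-t_{(+)},\,l_{(-)}-t_{(-)},\,-q_T)$ and glue by Theorem 2 (Nikulin's Corollary 1.6.2) --- is also the standard and correct reduction, and your congruence bookkeeping for it is accurate.

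The genuine gap is in the lemma you invoke to produce $M$. Nikulin's existence theorem (Theorem 1.10.1 of \cite{Nik}) is \emph{not} the statement you quote: besides nonnegativity, the Milgram congruence and the length bound $s_{(+)}+s_{(-)}\ge\ell(G)$, it imposes additional $p$-adic conditions, which are automatic only when $s_{(+)}+s_{(-)}>\ell(G_{q_p})$ for every prime $p$. With just your three conditions the statement is false. Take $G=(\mathbb Z/3\mathbb Z)^4$ with $q=\frac{2}{3}(x_1^2+x_2^2+x_3^2)+\frac{4}{3}x_4^2$ and $(s_{(+)},s_{(-)})=(4,0)$: then $\mathrm{sign}(q)\equiv 2+2+2-2\equiv 4\pmod 8$ and $\ell(G)=4=s_{(+)}+s_{(-)}$, yet no even lattice exists, because any positive-definite rank-$4$ lattice $L$ with $L^*/L\cong(\mathbb Z/3\mathbb Z)^4$ satisfies $L=3L^*$, so $(L^*,3b)$ is unimodular positive definite of rank $4$, hence isometric to $\mathbb Z^4$, forcing $L\cong\langle 3\rangle^{\oplus 4}$, which is odd. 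This matters precisely where it cannot be avoided: condition (b) allows the boundary case $t_{(+)}+t_{(-)}=\frac12(l_{(+)}+l_{(-)})$ with $\ell(G_T)=t_{(+)}+t_{(-)}$ (for instance $T=U[2]$, of signature $(1,1)$, to be embedded in $U^{2}$), and there your chain of inequalities gives only $\mathrm{rank}(M)=\ell(G_T)$ with equality, so the extra local conditions must genuinely be verified. The repair uses the hypothesis your argument never exploits, namely that $q_T$ is actually realized by an even lattice of rank $t_{(+)}+t_{(-)}$: the existence of $T$ (equivalently of $T[-1]$, whose discriminant form is $-q_T$) guarantees Nikulin's local conditions for that rank and that form, and one then checks they carry over to the signature $(l_{(+)}-t_{(+)},\,l_{(-)}-t_{(-)})$ using $l_{(+)}\equiv l_{(-)}\pmod 8$. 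That transfer is the genuinely arithmetic step which your write-up replaces by a false general existence statement; as written, (b)$\Rightarrow$(a) does not go through.
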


\begin{theo}(Niemeier \cite{Nie})
A negative-definite even unimodular lattice $L$ of rank $24$ is determined by its root lattice $L_{\hbox{root}}$ up to isometries. There are $24$ possibilities for $L$ and $L/L_{\hbox{root}}$ listed below.
\end{theo}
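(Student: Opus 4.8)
The plan is to follow the theta-series method of Venkov together with Nikulin's gluing correspondence. After replacing $L$ by $L[-1]$ I may assume $L$ is positive-definite even unimodular of rank $24$; write $R=L_{\hbox{root}}$ for its root system and $N=\#\{x\in L\mid x^2=2\}$ for the number of roots. First I would observe that the theta series $\Theta_L(\tau)=\sum_{x\in L}e^{\pi i\tau x^2}$ is a modular form of weight $12$ for $SL_2(\mathbb Z)$; since that space is two-dimensional and $\Theta_L$ has constant term $1$ and first coefficient $N$, it is completely determined by $N$. The key refinement is to twist by a harmonic homogeneous polynomial $P$ of degree $2$: the weighted series $\Theta_{L,P}(\tau)=\sum_{x\in L}P(x)e^{\pi i\tau x^2}$ is then a cusp form of weight $14$, and as $S_{14}(SL_2(\mathbb Z))=0$ it vanishes identically. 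Reading the first Fourier coefficient gives $\sum_{x^2=2}P(x)=0$ for every harmonic quadratic $P$, which is equivalent to the moment identity $\sum_{x^2=2}x\otimes x=\frac{N}{12}\,\mathrm{Id}$.

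From this identity I would extract Venkov's structural constraints. Restricting the tensor to the span $V_i$ of an irreducible component $R_i$ and using that $\sum_{\alpha\in R_i}\alpha\otimes\alpha$ is invariant under the Weyl group, which acts irreducibly on $V_i$, it must be a scalar on $V_i$; comparing traces shows that the ratio $N_i/r_i$ of the number of roots to the rank of $R_i$ is the same for every component and equals $N/24$. Since for an irreducible root system this ratio is exactly the Coxeter number, all components of $R$ share one Coxeter number $h$, with $N=24h$. The same identity forces the tensor to be nondegenerate on all of $\mathbb R^{24}$ whenever $N>0$, while its rank is at most that of $R$; hence either $R$ is empty or $R$ has full rank $24$.

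Next I would enumerate the admissible root systems. Using the Coxeter numbers $h(A_n)=n+1$, $h(D_n)=2n-2$, $h(E_6)=12$, $h(E_7)=18$, $h(E_8)=30$, I would list all disjoint unions of irreducible root lattices of total rank $24$ whose components share a common Coxeter number; this is a finite bookkeeping check producing $23$ root systems (for instance $A_1^{24}$ at $h=2$, $A_2^{12}$ at $h=3$, $D_{24}$ at $h=46$, and $E_8^3$ together with $D_{16}E_8$ at $h=30$), to which the empty system is adjoined, giving $24$ candidates. For each candidate $R$ I would form the root lattice $Q=Q(R)$ and read off its discriminant group $G_Q$ and form $q_Q$ as the orthogonal sum of the data computed in Section~4 for $A_l$, $D_l$, $E_p$. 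By Nikulin's overlattice correspondence (Proposition~1.4.1 of \cite{Nik}) the even unimodular overlattices $L\supset Q$ correspond bijectively to subgroups $H\subset G_Q$ with $q_Q|_H=0$; unimodularity forces $|H|^2=|G_Q|$, so $H$ is a maximal isotropic subgroup, and the extra condition $L_{\hbox{root}}=R$ says $H$ must contain no glue vector producing a new norm-$2$ element. I would then verify, component by component using the explicit generators $\alpha_l,\delta_l,\eta_p$, that exactly one such admissible $H$ exists up to the subgroup of $O(q_Q)$ realised by isometries permuting isomorphic components; this $H$ is precisely the group $L/L_{\hbox{root}}$ recorded in the table (the classical glue codes, e.g.\ the binary Golay code for $A_1^{24}$).

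The main obstacle will be this last case-by-case determination of a unique admissible glue group, and in particular the rootless case $R=\emptyset$: there the modular constraint only tells me $N=0$, so I would still have to produce one such lattice (the Leech lattice, obtainable from the binary Golay code) and prove its uniqueness by a separate argument in the spirit of Conway. Granting existence and uniqueness for each of the $24$ root systems, the theorem follows: $L$ is determined up to isometry by $R=L_{\hbox{root}}$, and the quotients $L/L_{\hbox{root}}$ are exactly the groups tabulated.
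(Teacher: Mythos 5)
The paper contains no proof of this statement: it is Niemeier's classification theorem, quoted verbatim from \cite{Nie} and used purely as a black box in Nishiyama's method, so there is no internal argument to compare yours against line by line. What you have written is the standard modern proof due to Venkov (the one reproduced in Conway--Sloane and in Ebeling's book on lattices and codes), and its skeleton is correct: the harmonic theta-series argument does give $\sum_{x^2=2}x\otimes x=\tfrac{N}{12}\,\mathrm{Id}$, from which all irreducible components of the root system share the Coxeter number $h=N/24$ and the root system has rank $0$ or $24$; the enumeration of rank-$24$ root systems with constant Coxeter number does yield exactly $23$ nonempty candidates; and Nikulin's overlattice correspondence (quoted in the paper as a lemma, Proposition 1.4.1 of \cite{Nik}) is the right tool to pass from a candidate root lattice $Q$ to unimodular overlattices via maximal isotropic glue subgroups $H\subset G_Q$ containing no vectors of norm $-2$.

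Two caveats, both at points you yourself flag. First, the case-by-case determination of the glue codes is the bulk of Niemeier's actual theorem, not a routine check; moreover, for uniqueness you need the full image of $O(Q)$ in $O(q_Q)$ --- including the diagram automorphisms of the individual components (e.g.\ negation on the glue group of an $A_n$ factor), not only the permutations of isomorphic components that you invoke --- since for several root systems (already $A_{12}^{\oplus 2}$, $A_8^{\oplus 3}$) distinct admissible $H$'s are conjugate only under this larger group. Second, the rootless case lies entirely outside your modular-form machinery: the constraint $N=0$ says nothing, and existence plus uniqueness of the Leech lattice is a genuinely separate theorem (Conway's characterization, or a mass-formula argument). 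So your proposal should be read as a correct and well-organized program for the standard proof, with the two genuinely hard steps correctly identified but not carried out; as it stands it does not yet constitute a complete proof of the statement.
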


\begin{center}
\begin{tabular}{|l|r|l|r|}
\hline

$L_{\hbox{root}}$ & $L/L_{\hbox{root}}$ & $L_{\hbox{root}}$ & $L/L_{\hbox{root}}$ \\ \hline
  $E_8^3$    &  $(0)$ & $D_5^{\oplus 2}\oplus A_7^{\oplus 2} $& $\mathbb Z/4\mathbb Z \oplus \mathbb Z /8 \mathbb Z$\\ \hline
$E_8\oplus D_{16}$ & $\mathbb Z /2 \mathbb Z $ & $A_8^{\oplus 3}$ & $\mathbb Z/3\mathbb Z \oplus \mathbb Z / 9 \mathbb Z$ \\ \hline
$E_7^{\oplus 2}\oplus D_{10}$ & $(\mathbb Z/2 \mathbb Z)^2$ & $A_{24}$ & $\mathbb Z/5 \mathbb Z $\\\hline 
$E_7 \oplus A_{17}$ & $\mathbb Z /6 \mathbb Z$ & $A_{12}^{\oplus 2}$ & $\mathbb Z /13 \mathbb Z$\\ \hline
$D_{24}$ & $\mathbb Z /2 \mathbb Z $ & $D_4^{\oplus 6}$ & $(\mathbb Z /2 \mathbb Z)^6$ \\ \hline
$D_{12}^{\oplus 2}$ & $(\mathbb Z /2 \mathbb Z)^2$ & $D_4 \oplus A_5^{\oplus 4}$ & $\mathbb Z /2 \mathbb Z \oplus ( \mathbb Z /6 \mathbb Z )^2$ \\ \hline
$D_8^{\oplus 3 }$ & $(\mathbb Z /2 \mathbb Z)^3$ & $A_6^{\oplus 4}$ & $(\mathbb Z /7 \mathbb Z )^2$ \\ \hline
$D_9 \oplus A_{15} $ & $\mathbb Z /8 \mathbb Z $ & $A_4^{\oplus 6}$ & $(\mathbb Z / 5 \mathbb Z)^3$\\ \hline
$E_6^{\oplus 4}$ &  $(\mathbb Z /3 \mathbb Z)^2$ & $A_3^{\oplus 8}$ & $(\mathbb Z /4 \mathbb Z)^4$ \\ \hline

$E_6 \oplus D_7 \oplus A_{11}$ & $\mathbb Z /12 \mathbb Z $ & $A_2^{\oplus 12}$ & $(\mathbb Z /3 \mathbb Z)^6 $ \\ \hline
$D_6^{\oplus 4}$ & $(\mathbb Z /2 \mathbb Z)^4$ & $A_1^{\oplus 24}$ & $(\mathbb Z /2 \mathbb Z)^{12}$\\ \hline
$D_6 \oplus A_9^{\oplus 2}$ & $\mathbb Z /2 \mathbb Z \oplus \mathbb Z /10 \mathbb Z $ & $0$ & $\Lambda _{24}$\\

\hline
\end{tabular}
\end{center}

The lattices $L$ defined in theorem 4 are called {\bf{Niemeier lattices}}.

\subsection{Nishiyama's method}

Recall that a $K3$ surface may admit more than one elliptic fibration, but up to isomorphism, there is only a finite number of elliptic fibrations \cite{St}.

To establish a complete classification of the elliptic fibrations on the $K3$ surface $Y_2$, we use Nishiyama's method based on lattice theoretic ideas \cite{Nis}. The technique builds on a converse of Nikulin's results.

Given an elliptic $K3$ surface $X$, Nishiyama aims at embedding the frames of all elliptic fibrations into a negative-definite lattice, more precisely into a Niemeier lattice of rank $24$. For this purpose, he first determines an even negative-definite lattice $M$ such that
$$q_M=-q_{NS(X)}, \,\,\,\,\,\,\hbox{rank}(M)+\rho(X)=26.$$
By theorem 2, $M\oplus W(X)$ has a Niemeier lattice as an overlattice for each frame $W(X)$ of an elliptic fibration on $X$. Thus one is bound to determine the (inequivalent) primitive embeddings of $M$ into Niemeier lattices $L$. To achieve this, it is essential to consider the root lattices involved. In each case, the orthogonal complement of $M$ into $L$ gives the corresponding frame $W(X)$.

\subsubsection{The transcendental lattice and argument from Nishiyama paper}
Denote by $\mathbb T(X)$ the transcendental lattice of $X$, i.e. the orthogonal complement of $NS(X)$ in $H^2(X,\mathbb Z)$ with respect to the cup-product,
$$\mathbb T(X)=\hbox{NS}(X)^{\perp} \subset H^2(X,\mathbb Z).$$
In general, $\mathbb T(X)$ is an even lattice of rank $r=22-\rho(X)$ and signature $(2,20-\rho(X))$. Let $t=r-2$. By Nikulin's theorem 3, $\mathbb T(X)[-1]$ admits a primitive embedding into the following indefinite unimodular lattice:
$$\mathbb T(X)[-1] \hookrightarrow U^t \oplus E_8.$$
Then define $M$ as the orthogonal complement of $\mathbb T(X)[-1]$ in $ U^t \oplus E_8$. By construction, $M$ is a negative-definite lattice of rank $2t+8-r=r+4=26-\rho(X)$. 

By lemma 3 the discriminant form satisfies
$$q_M=-q_{\mathbb T(X)[-1]}=q_{\mathbb T(X)}=-q_{\hbox{NS}(X)}.$$
Hence $M$ takes exactly the shape required for Nishiyama's technique.

\subsubsection{Torsion group}
First we classify all the primitive embeddings of $M$ into $L_{\hbox{root}}$. Denote $N:=M^{\perp}$ in $L_{\hbox{root}}$ and $W=M^{\perp}$ into $L$.

If $M$ satisfies $M_{\hbox{root}}=M$, we can apply Nishiyama's results \cite{Nis}.

In particular,
\begin{itemize}
\item $M$ primitively embedded in $L_{\hbox{root}}$ $\Longleftrightarrow $ $M$ primitively embedded in $L$
\item $N/N_{\hbox{root}}$ is torsion-free.
\end{itemize}

Notice that the rank $r$ of the Mordell-Weil group is equal to $\hbox{rk} (W)- \hbox{rk} (W_{\hbox{root}})$ and its torsion part is $\overline{W}_{\hbox{root}}/W_{\hbox{root}}$.

We need also the following lemma.
\begin{lem}
\begin{enumerate}
\item If $\det N=\det M$, then the Mordell-Weil group is torsion-free.
\item If $r=0$, then the Mordell-Weil group is isomorphic to $W/N$.
\item In general, there are the following inclusions
$$ \overline{W}_{\hbox{root}}/W_{\hbox{root}} \subset W/N \subset L/L_{\hbox{root}}.$$
\end{enumerate}
\end{lem}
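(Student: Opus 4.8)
The plan is to reduce everything to two structural identities and then do lattice bookkeeping. First I would record the relation between $N$ and $W$: since $N=(M)^{\perp}_{L_{\mathrm{root}}}$ and $W=(M)^{\perp}_{L}$ with $L_{\mathrm{root}}\subset L$, one has $N=W\cap L_{\mathrm{root}}$, so the inclusion $W\hookrightarrow L$ induces an injection $W/N=W/(W\cap L_{\mathrm{root}})\hookrightarrow L/L_{\mathrm{root}}$; this is already the right-hand inclusion of part (3). The second identity is $W_{\mathrm{root}}=N_{\mathrm{root}}$: because $L_{\mathrm{root}}$ is by definition generated by the roots of $L$, every root of $W$ is a root of $L$ orthogonal to $M$, hence a root of $L_{\mathrm{root}}$ orthogonal to $M$, i.e. a root of $N$, and conversely. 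I expect these two identities, together with the fact that \emph{all} roots of a Niemeier lattice lie in $L_{\mathrm{root}}$, to be the conceptual crux; the remainder is formal.

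For the left-hand inclusion of part (3) I would look at the composite $\overline{W}_{\mathrm{root}}\to W\to W/N$ and compute its kernel $\overline{W}_{\mathrm{root}}\cap N$. The inclusion $W_{\mathrm{root}}=N_{\mathrm{root}}\subset\overline{W}_{\mathrm{root}}\cap N$ is clear. Conversely, if $x\in\overline{W}_{\mathrm{root}}\cap N$ then $mx\in W_{\mathrm{root}}=N_{\mathrm{root}}$ for some $m\geq 1$, so $x$ lies in the primitive closure of $N_{\mathrm{root}}$ in $N$; but Nishiyama's result that $N/N_{\mathrm{root}}$ is torsion-free forces $x\in N_{\mathrm{root}}=W_{\mathrm{root}}$. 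Hence $\overline{W}_{\mathrm{root}}\cap N=W_{\mathrm{root}}$ and $\overline{W}_{\mathrm{root}}/W_{\mathrm{root}}\hookrightarrow W/N$. Since the torsion of the Mordell-Weil group is precisely $\overline{W}_{\mathrm{root}}/W_{\mathrm{root}}$, part (3) embeds that torsion into $W/N$, and then into $L/L_{\mathrm{root}}$.

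Part (1) will follow from a determinant count. As $M$ is primitive in the even unimodular lattice $L$, Lemma~2 (Nikulin 1.6.1) gives $\det W=\det M$. The sublattice $N\subset W$ has the same rank $24-\mathrm{rk}(M)$, so it has finite index and $\det N=[W:N]^2\det W$. The hypothesis $\det N=\det M=\det W$ then forces $[W:N]=1$, i.e. $N=W$, whereupon the injection of part (3) shows the torsion embeds into $W/N=0$; thus the Mordell-Weil group is torsion-free.

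Finally, for part (2) I would use the hypothesis $r=0$, i.e. $\mathrm{rk}(W_{\mathrm{root}})=\mathrm{rk}(W)$. Then $W_{\mathrm{root}}$ has full rank in $W$, so its primitive closure is the whole lattice, $\overline{W}_{\mathrm{root}}=W$, and the Mordell-Weil group, being purely torsion in this case, equals $\overline{W}_{\mathrm{root}}/W_{\mathrm{root}}=W/W_{\mathrm{root}}$. On the other side, $N_{\mathrm{root}}=W_{\mathrm{root}}$ also has full rank in $N$, and combined with the torsion-freeness of $N/N_{\mathrm{root}}$ this forces $N=N_{\mathrm{root}}=W_{\mathrm{root}}$. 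Therefore $W/W_{\mathrm{root}}=W/N$, giving the Mordell-Weil group as $W/N$. The one step needing care here is this last deduction $N=N_{\mathrm{root}}$: full rank alone is not enough and torsion-freeness alone is not enough, but the two together are exactly what is required.
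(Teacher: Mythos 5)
Your proof is correct, but there is nothing in the paper to compare it with: the lemma is stated in Section 5.3.2 without proof, as a companion to the facts imported from Nishiyama's work, and the paper then simply applies it in the case-by-case analysis. Your argument therefore fills a genuine gap, and it does so using exactly the toolkit the paper provides. The two identities you isolate, $N=W\cap L_{\mathrm{root}}$ and $W_{\mathrm{root}}=N_{\mathrm{root}}$ (the latter because all roots of the Niemeier lattice $L$ lie in $L_{\mathrm{root}}$, so $\Delta(W)=\Delta(N)$), give the right-hand inclusion of (3) by the second isomorphism theorem, and the left-hand inclusion follows from your computation $\overline{W}_{\mathrm{root}}\cap N=W_{\mathrm{root}}$, which correctly invokes the torsion-freeness of $N/N_{\mathrm{root}}$ quoted by the paper just before the lemma. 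Part (1) correctly combines the paper's Lemma 2 (Nikulin's Proposition 1.6.1), giving $\det W=\det M$, with the index formula $\det N=[W:N]^2\det W$; this is precisely the relation the paper uses implicitly whenever it deduces $W/N$ from $\det N$ (e.g. $\det N=8\times 4$ yielding $W/N\simeq\mathbb Z/2\mathbb Z$ for the fibration $A_1D_{16}$). In part (2) you rightly flag that the deduction $N=N_{\mathrm{root}}$ needs both full rank of $N_{\mathrm{root}}$ in $N$ and torsion-freeness of $N/N_{\mathrm{root}}$; both are available and correctly combined. One minor streamlining: in (1), finiteness of the index $[W:N]$ is most cleanly cited from your own inclusion $W/N\hookrightarrow L/L_{\mathrm{root}}$, since $L/L_{\mathrm{root}}$ is finite, rather than from the rank count.
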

\section{Elliptic fibrations of $Y_2$}
\begin{theo}
There are $30$ elliptic fibrations with section, unique up to isomorphism, on the elliptic surface $Y_2$. They are listed with the rank and torsion of their Mordell-Weil groups on the table 1.
\end{theo}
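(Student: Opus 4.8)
The plan is to carry out Nishiyama's method systematically, reducing the classification of elliptic fibrations to a lattice-embedding problem. First I would compute the transcendental lattice of $Y_2$. Since $Y_2$ is singular with $\rho(Y_2)=20$, its transcendental lattice $\mathbb{T}(Y_2)$ has rank $2$ and Gram matrix $\left(\begin{smallmatrix} 2 & 0 \\ 0 & 4 \end{smallmatrix}\right)$, so $t=r-2=0$. By the recipe in Section 5.3.1, I must embed $\mathbb{T}(Y_2)[-1]$ primitively into $U^0\oplus E_8=E_8$ and take $M$ to be its orthogonal complement; $M$ is then a negative-definite even lattice of rank $26-\rho(Y_2)=6$ with $q_M=-q_{NS(Y_2)}$. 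I expect $M$ to be identifiable as $A_1\oplus A_3\oplus \langle\text{something}\rangle$ or a similar explicit root lattice; the key point to verify is $M_{\mathrm{root}}=M$, which is what licenses the use of Nishiyama's clean statements (the equivalence of embeddings into $L_{\mathrm{root}}$ and into $L$, and torsion-freeness of $N/N_{\mathrm{root}}$).

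Next I would enumerate the primitive embeddings of $M$ into the root lattices $L_{\mathrm{root}}$ of each of the $24$ Niemeier lattices listed in Theorem 4. Because $M$ is a root lattice of rank $6$, each primitive embedding amounts to choosing a root-sublattice isomorphic to $M$ inside $L_{\mathrm{root}}$, up to the action of the Weyl group and the glue group $L/L_{\mathrm{root}}$; this is a finite, combinatorial computation component by component, using the standard classification of root-sublattice embeddings $A_m, D_n, E_p \hookrightarrow A_{m'}, D_{n'}, E_{p'}$. For each such embedding I would compute $N=M^{\perp}_{L_{\mathrm{root}}}$, read off its root sublattice $N_{\mathrm{root}}$, and thereby obtain the frame $W=M^{\perp}_{L}$. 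The singular-fiber configuration is $W_{\mathrm{root}}=N_{\mathrm{root}}$, the Mordell-Weil rank is $\operatorname{rk}(W)-\operatorname{rk}(W_{\mathrm{root}})=6-\operatorname{rk}(N_{\mathrm{root}})$, and the torsion is $\overline{W}_{\mathrm{root}}/W_{\mathrm{root}}$, which I would pin down using Lemma 4 together with the inclusions $\overline{W}_{\mathrm{root}}/W_{\mathrm{root}}\subset W/N\subset L/L_{\mathrm{root}}$ and the discriminant-form computations from Section 3.

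The hardest part will be ensuring the enumeration is exhaustive yet free of duplicates: I must confirm that two primitive embeddings of $M$ are counted as the same fibration exactly when they are related by an isometry of $L$, so I would track the $O(L)$-orbits of embeddings rather than merely the abstract isomorphism type of $N$. Distinct Niemeier lattices can also produce the same frame $W$, and the same Niemeier lattice can yield several inequivalent embeddings, so careful bookkeeping across all $24$ lattices is essential; the count $30$ emerges only after removing all such coincidences. Finally I would assemble the data---each frame $W$ with its root type (the Kodaira fibers via $T(X)=U\oplus W_{\mathrm{root}}$), its Mordell-Weil rank, and its torsion---into Table 1, and verify internal consistency using Proposition 1, checking that $|\mathrm{disc}(NS(Y_2))|=8$ is reproduced in every case via $\operatorname{disc}(T(X))\cdot\operatorname{disc}(MWL(X))/|E(K)_{\mathrm{tors}}|^2$. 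The rank tally---$14$ fibrations of rank $0$, $13$ of rank $1$, and $3$ of rank $2$---then serves as a final cross-check on the completeness of the enumeration.
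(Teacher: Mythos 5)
Your plan is, in outline, the paper's own proof: the paper applies Nishiyama's method exactly as you describe (compute $M$ from a primitive embedding $\mathbb{T}(Y_2)[-1]\hookrightarrow U^t\oplus E_8$ with $t=0$, enumerate primitive embeddings of $M$ into the root lattices of the Niemeier lattices, take $N=M^{\perp}_{L_{\mathrm{root}}}$ and $W=M^{\perp}_{L}$, and extract fibers, rank and torsion from $W$, Lemma 4 and the glue group $L/L_{\mathrm{root}}$). However, two concrete items in your write-up are wrong, and one of them is the linchpin of the entire enumeration.

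First, the lattice $M$: it is $D_5\oplus A_1$ (Nishiyama's computation, quoted at the start of the paper's proof), and your candidate $A_1\oplus A_3\oplus\langle\text{rank-2 piece}\rangle$ is impossible. Since $\mathbb{T}(Y_2)[-1]$ is primitively embedded in the unimodular lattice $E_8$, Lemma 3 forces $|\det M|=|\det\mathbb{T}(Y_2)|=8$; as $A_1\oplus A_3$ already has determinant of absolute value $8$, the remaining rank-$2$ summand would have to be even, negative-definite and unimodular, and no such lattice exists (even definite unimodular lattices have rank divisible by $8$). This is not a cosmetic issue: everything downstream is specific to $D_5\oplus A_1$ --- the fact that $D_5\oplus A_1$ embeds primitively only into $D_l$ ($l\geq 7$), $E_7$ and $E_8$, hence the short list of admissible Niemeier root lattices (Proposition 2), and the explicit complements such as $(D_5\oplus A_1)^{\perp}_{E_7}=\langle -4\rangle$ and $(D_5\oplus A_1)^{\perp}_{E_8}=A_1\oplus\langle -4\rangle$ that produce the $30$ frames --- so a proof cannot leave $M$ as a placeholder. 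Second, your rank formula is mis-evaluated: $\operatorname{rk}(W)=24-\operatorname{rk}(M)=18=\rho(Y_2)-2$, so the Mordell-Weil rank is $18-\operatorname{rk}(N_{\mathrm{root}})$, not $6-\operatorname{rk}(N_{\mathrm{root}})$, which would be negative for every fibration in Table 1. Finally, a smaller logical point: the tally $14+13+3=30$ cannot serve as a ``cross-check on completeness,'' since it is the conclusion being proved rather than an independent constraint; completeness rests on Proposition 2 (the exhaustive list of admissible $L_{\mathrm{root}}$) combined with the uniqueness, up to Weyl action, of the embeddings in Proposition 3.
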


\begin{table}
\begin{center}
\begin{tabular}{|l|r|r|r|c|c|c|}
\hline
$L_{\hbox{root}}$ & $L/L_{\hbox{root}}$ &  & & \hbox{Reducible} & \hbox{Rk} & \hbox{Tors.} \\
    &  &    &  & \hbox{fibers} &   &  \\  \hline          \hline
  $E_8^3$    &  $(0)$ & & & & &\\ \hline
&   & $A_1\subset E_8$ & $D_5 \subset E_8$  & $E_7 A_3 E_8$ & $0$ & $(0)$\\ \hline
&   & $A_1\oplus D_5 \subset E_8$ &   & $A_1 E_8  E_8$ & $1$ & $(0)$\\ \hline
  $E_8 D_{16}$    &  $\mathbb Z /{2 \mathbb Z}$ & & & & &\\ \hline
&   & $A_1\subset E_8$ & $D_5 \subset D_{16}$  & $E_7 D_{11}$ & $0$ & $(0)$\\ \hline
&   & $A_1\oplus D_5 \subset E_8$ &   & $A_1 D_{16}$ & $1$ & $\mathbb Z /{2 \mathbb Z}$\\ \hline
&   & $D_5\subset E_8$ & $A_1 \subset D_{16}$  & $A_3 A_1 D_{14}$ & $0$ & $\mathbb Z /{2 \mathbb Z}$\\ \hline
&   & $A_1\oplus D_5 \subset D_{16}$ &   & $E_8 A_1 D_{9}$ & $0$ & $(0)$\\ \hline
  $E_7^2 D_{10}$    &  $(\mathbb Z /{2 \mathbb Z})^2$ & & & & &\\ \hline
&   & $A_1\subset E_7$ & $D_5 \subset D_{10}$  & $E_7 D_6 D_5$ & $0$ & $\mathbb Z /{2 \mathbb Z}$\\ \hline
&   & $A_1\subset E_7$ & $D_5 \subset E_7$  & $ D_6 A_1 D_{10}$ &$1$ & $(0)$\\ \hline
&   & $A_1\oplus D_5 \subset E_7$ &   & $E_7 D_{10}$ & $1$ & $\mathbb Z /{2 \mathbb Z}$\\ \hline
&   & $A_1\oplus D_5 \subset D_{10}$ &   & $E_7 E_7 A_1 A_3$ & $0$ & $\mathbb Z /{2 \mathbb Z}$\\ \hline
&   & $D_5\subset E_7$ & $A_1 \subset D_{10}$  & $A_1 A_1 D_8 E_7$ & $1$ & $\mathbb Z /{2 \mathbb Z}$\\ \hline
 $E_7 A_{17}$    &  $\mathbb Z /{6 \mathbb Z}$ & & & & &\\ \hline
&   & $A_1\oplus D_5 \subset E_7$ &   & $A_{17}$ & $1$ & $\mathbb Z /{3 \mathbb Z}$\\ \hline
&   & $D_5\subset E_7$ & $A_1 \subset A_{17}$  & $A_1 A_{15}$ & $2$ & $(0)$\\ \hline
 $D_{24}$    &  $\mathbb Z /{2 \mathbb Z}$ & & & & &\\ \hline
&   & $A_1\oplus D_5 \subset D_{24}$ &   & $A_1 D_{17}$ & $0$ & $(0)$\\ \hline

 $D_{12}^2$    &  $(\mathbb Z /{2 \mathbb Z})^2$ & & & & &\\ \hline
&   & $A_1 \subset D_{12}$ & $D_5 \subset D_{12}$  & $A_1 D_{10} D_{7}$ & $0$ & $\mathbb Z /{2 \mathbb Z}$\\ \hline
&   & $A_1\oplus D_5 \subset D_{12}$ &   & $A_1 D_5 D_{12}$ & $0$ & $\mathbb Z /{2 \mathbb Z}$\\ \hline
 $D_8^3$    &  $(\mathbb Z /{2 \mathbb Z})^3$ & & & & &\\ \hline
&   & $A_1 \subset D_{8}$ & $D_5 \subset D_{8}$  & $A_1 D_{6}A_3 D_{8}$ & $0$ & $(\mathbb Z /{2 \mathbb Z})^2 $\\ \hline
&   & $A_1\oplus D_5 \subset D_{8}$ &   & $A_1 D_8 D_{8}$ & $1$ & $\mathbb Z /{2 \mathbb Z}$\\ \hline
 $D_9 A_{15}$    &  $\mathbb Z /{8 \mathbb Z}$ & & & & &\\ \hline
&   & $A_1\oplus D_5 \subset D_{9}$ &   & $ A_1 A_1 A_1 A_{15}$ & $0$ & $\mathbb Z /{4 \mathbb Z}$\\ \hline
&   & $D_5 \subset D_{9}$ & $A_1 \subset A_{15}$  & $D_4 A_{13}$ & $1$ & $(0)$\\ \hline
 $E_6^4$    &  $(\mathbb Z /{3 \mathbb Z)^2}$ & & & & &\\ \hline
&   & $A_1 \subset E_6$ & $D_5 \subset E_6$  & $A_5 E_6 E_6$ & $1$ & $\mathbb Z /{3 \mathbb Z}$\\ \hline
 $ A_{11} E_6 D_7$    &  $\mathbb Z /{12 \mathbb Z}$ & & & & &\\ \hline
&   & $A_1 \subset E_6$ & $D_5 \subset D_7$  & $A_5 A_1 A_1 A_{11}$ & $0$ & $\mathbb Z /{6 \mathbb Z}$\\ \hline
&   & $A_1 \subset A_{11}$ & $D_5 \subset D_7$  & $A_9 A_1 A_1 E_6$ & $1$ & $(0)$\\ \hline
&   & $A_1\oplus D_5 \subset D_{7}$ &   & $A_{11}E_6  A_1 $ & $0$ & $\mathbb Z /{3 \mathbb Z}$\\ \hline
&   & $A_1 \subset A_{11}$ & $D_5 \subset E_6$  & $ A_9 D_7$ & $2$ & $(0)$\\ \hline
&   & $D_5 \subset E_6$ & $A_1 \subset D_7$  & $ A_{11} A_1 D_5$ & $1$ & $\mathbb Z /{4 \mathbb Z}$\\ \hline
 $D_6^4$    &  $(\mathbb Z /{2 \mathbb Z})^4$ & & & & &\\ \hline
&   & $A_1 \subset D_6$ & $D_5 \subset D_6$  & $ A_1 D_4 D_6 D_6$ & $1$ & $(\mathbb Z /{2 \mathbb Z})^2$\\ \hline
 $D_6 A_9^2$    &  $\mathbb Z /{2\mathbb Z}\times \mathbb Z /{10\mathbb Z} $ & & & & &\\ \hline
&   & $D_5 \subset D_6$ & $A_1 \subset A_9$  & $ A_{7} A_9$ & $2$ & $(0)$\\ \hline

 $D_5^2 A_7^2$    &  $\mathbb Z /{4 \mathbb Z}\times \mathbb Z /{8\mathbb Z} $ & & & & &\\ \hline
&   & $D_5 \subset D_5$ & $A_1 \subset D_5$  & $A_1  A_{3} A_7 A_7$ & $0$ & $\mathbb Z /{8 \mathbb Z}$\\ \hline
&   & $D_5 \subset D_5$ & $A_1 \subset A_7$  & $D_5 A_5 A_7 $ & $1$ & $(0)$\\ \hline

\end{tabular}
\end{center}
\caption{The elliptic fibrations of $Y_2$}
\end{table}

\begin{proof}
We follow Nishiyama's method. Since 
$$\mathbb T(Y_2)=\left ( \begin{matrix}
                 2 &  0 \\
                 0  & 4
                    \end{matrix}
                   \right ),
                   $$
we get, by Nishiyama's computation \cite{Nis}, $M=D_5 \oplus A_1$. Thus we have to determine all the primitive embeddings of $M$ into the root lattices and their orthogonal complements.

\subsection{The primitive embeddings of $D_5 \oplus A_1$ into root lattices}

\begin{proposition}
There are primitive embeddings of $D_5\oplus A_1$ only into the following $L_{\hbox{root}}$:

$$E_8^{\oplus 3}, \,\, E_8\oplus D_{16}, \,\, E_7^{\oplus 2}\oplus D_{10}, \,\, E_7\oplus A_{17}, \,\, D_8^{\oplus 3}, \,\, D_9\oplus A_{15},$$
$$ E_6^{\oplus 4}, \,\, A_{11}\oplus E_6\oplus D_7, \,\, D_6^{\oplus 4}, \,\, D_6\oplus A_9^{\oplus 2}, \,\,D_5^{\oplus 2}\oplus A_7^{\oplus 2}.$$

\end{proposition}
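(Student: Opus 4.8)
The plan is to reduce everything to root sublattices and then to exploit a discriminant-form computation that makes primitivity automatic. Since $M=D_5\oplus A_1$ is a root lattice, it satisfies $M_{\hbox{root}}=M$, so by the equivalence in Nishiyama's method recalled above, $M$ embeds primitively into a Niemeier lattice $L$ if and only if it embeds primitively into its root part $L_{\hbox{root}}$; hence I work directly with the $24$ root lattices of Niemeier's table. The key simplification is that a primitive embedding of $M$ is no harder to obtain than an arbitrary one. The discriminant group is $G_M\simeq \mathbb Z/4\mathbb Z\oplus \mathbb Z/2\mathbb Z$, and using the values $q_{D_5}(\delta_5)=-\frac54$, $q_{D_5}(\overline{\delta}_5)=-1$ and $q_{A_1}=-\frac12$ from the section on root lattices, a direct check on the seven nonzero elements shows that $q_M$ never takes a value in $2\mathbb Z$. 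Thus $G_M$ has no nonzero isotropic vector, so by Nikulin's classification of even overlattices $M$ admits no nontrivial even overlattice, and therefore every embedding of $M$ into an even lattice is automatically primitive. Consequently the proposition reduces to the purely combinatorial question: for which $L_{\hbox{root}}$ does $D_5\oplus A_1$ occur as a sublattice at all?

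Next I would establish the structural constraint. In an orthogonal sum of root lattices $L_{\hbox{root}}=\bigoplus_i R_i$, every root lies in a single summand: if $e^2=-2$ and $e=\sum_i e_i$, then $-2=\sum_i e_i^2$ with each $e_i^2$ a nonpositive even integer, which forces one $e_i^2=-2$ and all others $e_i=0$. Any embedding of the root lattice $D_5$ sends its roots to roots of $L_{\hbox{root}}$, and since the Dynkin diagram of $D_5$ is connected its image cannot be split among mutually orthogonal summands; hence $D_5$ embeds into a single irreducible component $R_i$, forcing $R_i$ to contain a sub-root-system of type $D_5$. By the classification of sub-root-systems, the irreducible root lattices admitting a $D_5$-subsystem are exactly $D_m$ ($m\geq 5$), $E_6$, $E_7$, $E_8$; in particular no $A_m$ and not $D_4$. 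This immediately excludes every Niemeier lattice whose root part is assembled only from components of type $A_m$ and $D_4$, namely $A_{24}$, $A_{12}^{\oplus 2}$, $A_8^{\oplus 3}$, $A_6^{\oplus 4}$, $A_4^{\oplus 6}$, $A_3^{\oplus 8}$, $A_2^{\oplus 12}$, $A_1^{\oplus 24}$, $D_4^{\oplus 6}$, $D_4\oplus A_5^{\oplus 4}$, together with the Leech lattice, which has no roots. This settles the ``only into'' half.

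For the converse I would exhibit, for each surviving $L_{\hbox{root}}$, an embedding of $D_5\oplus A_1$, which is then primitive by the first paragraph. Embed $D_5$ as a standard sub-root-system of a component of type $D_m$ ($m\geq 5$) or $E$; since $L_{\hbox{root}}$ has rank $24$, the orthogonal complement of this copy of $D_5$ has rank $19$ and is again an orthogonal sum of root-lattice pieces, hence contains a root, which I take as the generator of $A_1$. Concretely, in the coordinate models one may take $D_5=\langle \epsilon_1-\epsilon_2,\dots,\epsilon_4-\epsilon_5,\epsilon_4+\epsilon_5\rangle$ inside the chosen component, and $A_1$ a root of a second component in the decomposable cases (such as $E_6^{\oplus 4}$, $D_9\oplus A_{15}$, $D_5^{\oplus 2}\oplus A_7^{\oplus 2}$), or a root such as $\epsilon_6-\epsilon_7$ lying in $(D_5)^{\perp}$ inside a large $D_m$ in the indecomposable case $D_{24}$. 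This realizes $D_5\oplus A_1$ primitively in precisely the Niemeier root lattices possessing a component of type $D_m$ ($m\geq 5$) or $E$, which are the ones listed.

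The step I expect to be the main obstacle is the ``only into'' direction, because it depends on knowing with certainty which irreducible root systems contain a copy of $D_5$, i.e.\ on the classification of sub-root-systems (so that every $A_m$ and $D_4$ is excluded, not merely the standard parabolic subsystems), together with the lemma confining a connected root system to one summand. Once these are in place, the existence half and, crucially, the primitivity are essentially free, the latter thanks to the absence of an isotropic vector in $G_M$. The remaining work of computing each orthogonal complement $N=M^{\perp}_{L_{\hbox{root}}}$ and sorting the embeddings up to $O(L_{\hbox{root}})$ is not needed for this proposition, but is exactly what must be carried out afterwards to produce the detailed entries of Table 1.
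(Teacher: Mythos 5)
Your route is genuinely different from the paper's, and its core is sound. The paper simply quotes Nishiyama's case-by-case results: $A_1$ embeds primitively in every irreducible root lattice, $D_5$ only in $D_l$ ($l\geq 5$) and $E_6,E_7,E_8$, and $D_5\oplus A_1$ inside a single component only in $D_l$ ($l\geq 7$), $E_7$, $E_8$; it then reads the admissible $L_{\hbox{root}}$ off Niemeier's table. You instead make primitivity automatic: since the nonzero values of $q_M$ on $G_M\simeq \mathbb Z/4\mathbb Z\oplus\mathbb Z/2\mathbb Z$ are $-5/4,-1,-1/2,-7/4,-3/2 \pmod{2\mathbb Z}$, there is no nonzero isotropic element, so by Lemma 1 (Nikulin) $M=D_5\oplus A_1$ has no proper even overlattice, and the primitive closure of any embedded copy of $M$ in an even lattice must equal $M$. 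This dispenses with Nishiyama's primitivity analysis altogether and reduces the proposition to the combinatorial question of which irreducible components contain a $D_5$-subsystem — a real simplification, and a correct one. One slip in the existence half: the orthogonal complement of $D_5$ in $L_{\hbox{root}}$ is \emph{not} in general an orthogonal sum of root lattices (e.g.\ $(D_5)_{E_6}^{\perp}=\langle -12\rangle$ contains no root), so "rank $19$, hence contains a root" is not a valid inference; but your concrete constructions (a root of a second component, or $\epsilon_6-\epsilon_7$ orthogonal to the standard $D_5$ inside $D_{24}$) cover every case, so existence survives.

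The genuine problem is your closing assertion that the Niemeier root lattices possessing a component of type $D_m$ ($m\geq 5$) or $E_p$ "are the ones listed." They are not: your criterion also admits $D_{24}$ and $D_{12}^{\oplus 2}$, which do not appear in the proposition. And your criterion is the correct one: $D_5\oplus A_1$ does embed, necessarily primitively by your own argument (or by the paper's fact that $D_5\oplus A_1$ embeds primitively in $D_l$ for all $l\geq 7$), into $D_{24}$ and into $D_{12}^{\oplus 2}$. The paper itself confirms this: Table 1 contains the fibration $A_1D_{17}$ coming from $A_1\oplus D_5\subset D_{24}$ and the fibrations $A_1D_{10}D_7$, $A_1D_5D_{12}$ coming from $D_{12}^{\oplus 2}$, and without these three entries the count of $30$ fibrations in Theorem 1 fails. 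So the proposition as printed omits two lattices (the correct list has $13$ entries, not $11$), in contradiction with its own proof and table. Your argument actually detects this misprint; instead of claiming agreement with the printed list — a claim that is false — you should either state your list of $13$ or explicitly flag the omission.
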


\begin{proof}
The assertion comes from Nishiyama's results \cite{Nis}. 

The root lattice $A_1$ can be primitively embedded in all $A_n$, $D_l$ and $E_p$, $n\geq 1$, $l\geq 2$, $p=6,7,8$.

The root lattice $D_5$ can be primitively embedded only in $D_l$, $l\geq 5$ and $E_p$, $p=6,7,8$.

The root lattice $D_5 \oplus A_1$ can be primitively embedded only in $D_l$, $l\geq 7$, $E_7$ and $E_8$.

The proposition follows from theorem 4 and the previous facts.
\end{proof}

\begin{proposition}
Up to the action of the Weyl group, the primitive embeddings are given in the following list
\begin{itemize}
\item $A_1=\langle a_n \rangle \subset A_n$
\item $A_1= \langle d_l \rangle  \subset D_l, \,\, l\geq 4$
\item $A_1=\langle e_1 \rangle \subset E_p, \, \, p=6,7,8$
\item $D_5\oplus A_1= \langle d_{l-1}, d_l, d_{l-2}, d_{l-3}, d_{l-4} \rangle \oplus \langle d_{l-6} \rangle \subset D_l, \,\, l\geq 7$
\item $D_5 \oplus A_1 =\langle e_2, e_5, e_4, e_3, e_1 \rangle \oplus \langle e_7 \rangle \subset E_n , \,\, n\geq 7.$
\end{itemize}
\end{proposition}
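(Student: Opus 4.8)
\emph{Strategy.} The plan is to classify the primitive embeddings one irreducible component at a time and then reassemble. Because $D_5$ and $A_1$ are generated by their roots, and any two roots with nonzero product lie in the same irreducible summand, a connected root system maps entirely into one component of $L_{\hbox{root}}$; hence it suffices to classify, up to $W(R)$, the embeddings of $A_1$ and of $D_5\oplus A_1$ into a single irreducible $R\in\{A_n,D_l,E_p\}$. Two observations remove the routine part. First, every vector $v$ with $q(v)=-2$ in an $ADE$ lattice is a root, so any embedding of $A_1$ carries its generator to a root. Second, each representative in the list is spanned by a subset of the simple-root basis --- for instance $\langle d_{l-1},d_l,d_{l-2},d_{l-3},d_{l-4},d_{l-6}\rangle\subset D_l$ omits $d_{l-5}$ and $d_1,\ldots,d_{l-7}$, and $\langle e_1,e_2,e_3,e_4,e_5,e_7\rangle\subset E_n$ omits only $e_6$ --- so it is a direct summand and therefore automatically primitive. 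Moreover the omitted separating node ($d_{l-5}$, resp.\ $e_6$) is exactly what makes the two blocks orthogonal, so the listed sublattices really are copies of $D_5\oplus A_1$. Thus primitivity and the orthogonal decomposition are free, and the entire content is the completeness claim: every primitive embedding is $W$-conjugate to a listed one.

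\emph{The $A_1$ cases.} In a simply-laced irreducible root system the Weyl group acts transitively on the set of roots. Combined with the first observation above, any $A_1=\langle v\rangle\subset R$ is $W(R)$-conjugate to $\langle a_n\rangle$, $\langle d_l\rangle$ or $\langle e_1\rangle$ according as $R=A_n$, $D_l$ or $E_p$. This settles the first three items.

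\emph{The $D_5\oplus A_1$ cases.} I would argue in two steps. First I fix the $D_5$: granting that all $D_5$ sub-root-systems of $R$ are $W(R)$-conjugate (see below), I move it to the standard position $\langle e_1,e_2,e_3,e_4,e_5\rangle$ in $E_p$, resp.\ the fork end $\langle d_{l-1},d_l,d_{l-2},d_{l-3},d_{l-4}\rangle$ in $D_l$. Second I place the $A_1$: being orthogonal to $D_5$, its generator is a root of the orthogonal complement $(D_5)^{\perp}_R$, whose root system I would compute directly in the coordinate descriptions of the root lattices given above. One finds $D_{l-5}$ inside $D_l$ (so a root exists exactly when $l\geq 7$), the system $A_1$ inside $E_7$, the system $A_3$ inside $E_8$, and \emph{no} roots inside $E_6$. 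This last fact is precisely what forbids $D_5\oplus A_1\subset E_6$ and forces $n\geq 7$, while the $A_3$ found in $E_8$ is exactly the reducible fibre $A_3$ appearing for the fibration $D_5\subset E_8$, $A_1\subset E_8$ of $E_8^{3}$ in Table 1. Since $W\big((D_5)^{\perp}_R\big)$ lies in the stabiliser of $D_5$ in $W(R)$ and is transitive on its own roots, the $A_1$ is pinned to $\langle d_{l-6}\rangle$, resp.\ $\langle e_7\rangle$, up to $W(R)$. Chaining the two conjugations shows every primitive embedding of $D_5\oplus A_1$ is $W$-equivalent to the listed representative.

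\emph{Main obstacle.} The crux is the uniqueness up to $W(R)$ of the $D_5$ sub-root-system. I would prove it by the root-by-root extension method: place the trivalent root using transitivity on $\Delta(R)$, then adjoin the two short legs and the length-two leg in turn, at each stage using that the stabiliser of the roots already placed acts transitively on the roots making the prescribed angle with them; this holds because $R$ is simply-laced of large enough rank, and in the exceptional lattices it can be replaced by the Borel--de Siebenthal description of the sub-root-systems of $E_6,E_7,E_8$. The one point demanding care is in $D_l$, where a $D_5$ supported on a fixed coordinate $5$-space is a priori only $O(D_l)$-conjugate to the standard one, the potential obstruction being the odd sign change exchanging $d_{l-1}$ and $d_l$; since $l\geq 7$ leaves at least two spare coordinates, this sign change can be compensated on the complement and hence realised inside $W(D_l)$, so the configurations are genuinely $W(D_l)$-conjugate and the classification is complete.
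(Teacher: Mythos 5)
The paper does not actually prove this proposition: its entire ``proof'' is the citation ``these assertions come from Nishiyama's computations \cite{Nis},'' plus a warning about notation. So your self-contained root-system argument is a genuinely different route, and it supplies exactly what the paper outsources. Most of it is correct and efficient: in an orthogonal sum of even negative-definite lattices any vector of square $-2$ lies in a single summand, so a connected root lattice embeds into one irreducible component; a sublattice spanned by part of a simple-root basis is a $\mathbb Z$-module direct summand, hence primitive; Weyl-group transitivity on roots of an irreducible simply-laced system settles the three $A_1$ items; and the two-step placement (first the $D_5$, then a root of $(D_5)^{\perp}_R$, whose root systems $D_{l-5}$, $A_1$, $A_3$, and none in $E_6$ match items (12), (15), (16), (14) of the paper's Proposition on orthogonal complements) is the right mechanism --- it even explains \emph{why} $l\geq 7$ and why $E_6$ drops out, which the bare citation leaves opaque. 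The price of your route is that the conjugacy of all $D_5$ sub-root-systems of $D_l$, $E_7$, $E_8$, which you rightly call the crux, remains a sketch (it is standard, via Dynkin/Borel--de Siebenthal, but in a self-contained account it needs the same care as the rest).

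There is, however, one step that fails as literally written. You pin the orthogonal $A_1$ by asserting that $W\bigl((D_5)^{\perp}_R\bigr)$ ``is transitive on its own roots.'' That is true when the complement's root system is irreducible ($D_{l-5}$ for $l\geq 8$, $A_3$ in $E_8$, trivially $A_1$ in $E_7$), but false in the one remaining case $R=D_7$: there $(D_5)^{\perp}_{D_7}=D_2=A_1\oplus A_1$ has two Weyl orbits of roots, represented in the coordinate model (with the $D_5$ on $\epsilon_3,\dots,\epsilon_7$) by $\epsilon_1-\epsilon_2$ and $\epsilon_1+\epsilon_2$. Moreover the pointwise stabiliser of the $D_5$ in $W(D_7)$ is exactly this $W(D_2)$ of order $4$, so nothing in your stated argument identifies the two distinct sublattices $D_5\oplus\langle\epsilon_1-\epsilon_2\rangle$ and $D_5\oplus\langle\epsilon_1+\epsilon_2\rangle$. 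They are identified only by the \emph{setwise} stabiliser: the sign change $\epsilon_2\mapsto-\epsilon_2$, compensated by a sign change on one of $\epsilon_3,\dots,\epsilon_7$, lies in $W(D_7)$, preserves the $D_5$ lattice, and swaps the two roots. This is precisely the compensation trick of your final paragraph --- but you spend it where it is not needed (on a fixed coordinate $5$-space there is only one $D_5$ sublattice, namely the intersection of $D_l$ with that subspace, since a finite-index sublattice of equal determinant is the whole thing, so coordinate permutations alone conjugate any two $D_5$'s), and it is absent where it is needed, at the $A_1$ placement for $l=7$, the unique case with reducible complement. The case is not idle: $D_7$ occurs in the Niemeier lattice $A_{11}\oplus E_6\oplus D_7$, and Table 1 has a single entry for $A_1\oplus D_5\subset D_7$, which rests on this uniqueness. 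Redeploy the trick at that step and your proof is complete.
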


\begin{proof}
These assertions come from Nishiyama's computations \cite{Nis}. Just be careful of the difference of notations between Nishiyama and us.
\end{proof}

\begin{proposition}
We get the following results about the orthogonal complements of the previous embeddings

\begin{enumerate}

\item
$$(A_1)_{A_n}^{\perp}=L_{n-2}^2=\left ( \begin{matrix}
                                   -2\times 3 & \vline & 2 & 0 & \hdots & 0 \\
                                    \hline \\
                                   2 &\vline & & & & \\
                                    0 & \vline & & & & \\
                                    \vdots & \vline & & A_{n-2} & & \\
                                    0 & \vline & & & & 
                                   \end{matrix} 
                                     \right )$$

with $\det L_{n-2}^2=2(n+1)$ 
\item $$(A_1)_{D_4}^{\perp}=A_1^{\oplus 3}$$
 $$(A_1)_{D_n}^{\perp}=A_1 \oplus D_{n-2}, \,\,\, n\geq 5$$
\item
$$(A_1)_{A_7}^{\perp}=(\langle a_7\rangle)_{A_7}^{\perp} =\langle a_7+2a_6,a_5,a_4,a_3,a_2,a_1\rangle$$
$$\alpha_7 \in (A_1)_{A_7^*}^{\perp} \,\,\,\,\, \hbox{but}\,\,\,\, k\alpha_7 \notin ((A_1)_{A_7}^{\perp})_{\hbox{root}}=A_5$$
\item
$$(A_1)_{A_9}^{\perp}=(\langle a_9\rangle)_{A_9}^{\perp} =\langle a_9+2a_8,a_7,a_6,a_5,a_4,a_3,a_2,a_1\rangle$$
$$\alpha_9 \in (A_1)_{A_9^*}^{\perp} \,\,\,\,\, \hbox{but}\,\,\,\,\, k\alpha_9 \notin ((A_1)_{A_9}^{\perp})_{\hbox{root}}=A_7$$
\item
$$(A_1)_{A_{11}}^{\perp}=(\langle a_{11} \rangle )_{A_{11}}^{\perp}=\langle a_{11}+2a_{10},a_9,a_8,a_7,a_6,a_5,a_4,a_3,a_2,a_1 \rangle$$
$$\alpha_{11}\in (A_1)_{A_{11}^*}^{\perp}\, \,\,\,\,\,\hbox{but}\,\,\,\,
 k\alpha_{11} \notin ((A_1)_{A_{11}}^{\perp})_{\hbox{root}}=A_9 $$
\item
$$(A_1)_{D_6}^{\perp} =\langle d_5 \rangle \oplus \langle d_5+d_6+2d_4+d_3,d_3,d_2,d_1 \rangle =A_1 \oplus D_4$$
$$\overline{\delta}_6 \,\,\,\,\,\hbox{and}\,\,\,\,\, \tilde{\delta}_6 \in (A_1)_{D_6^*}^{\perp},\,\,\,\,\,\delta_6 \notin (A_1)_{D_6^*}^{\perp}$$
\item
$$(A_1)_{D_7}^{\perp}=\langle d_7 \rangle_{D_7}^{\perp} =\langle d_6 \rangle \oplus \langle d_6+d_7+2d_5+d_4,d_4,d_3,d_2,d_1 \rangle=A_1\oplus D_5$$
$$3\delta_7 \in (A_1)_{D_7^*}^{\perp}$$
\item
$$(A_1)_{D_{10}}^{\perp}=\langle d_{10} \rangle_{D_{10}}^{\perp}=\langle d_9 \rangle \oplus \langle d_9+d_{10}+2d_8+d_7,d_7,d_6,d_5,d_4,d_3,d_2,d_1 \rangle =A_1 \oplus D_8$$
$$2\overline{\delta}_{10} \in A_1 \oplus D_8$$
\item
$$(A_1)_{E_6}^{\perp}=\langle e_1 \rangle _{E_6}^{\perp}=\langle e_1+e_2+2e_3+2e_4+e_5,e_6,e_5,e_4,e_2 \rangle =A_5$$
$$3\eta_6 \in A_5$$
\item
$$(A_1)_{E_7}^{\perp}=\langle e_1+e_2+2e_3+2e_4+e_5,e_7,e_6,e_5,e_4,e_2 \rangle
=D_6 $$
$$2\eta_7 \in (A_1)_{E_7}^{\perp}$$
\item $$(A_1)_{E_8}^{\perp}=E_7$$
\item $$(D_5)_{D_l}^{\perp}=D_{l-5}$$
$$D_1=(-4) \,\,\,\, D_2=A_1^{\oplus 2}\,\,\,\,D_3=\left (
                                                \begin{matrix}
                                                -2 & 0 & 1 \\
                                                 0 & -2 & 1 \\
                                                 1 & 1 & -2
                                                  \end{matrix} 
                                                 \right ) \simeq A_3$$
\item
$$(D_5)_{D_6}^{\perp}=\langle d_5+d_6+2d_4+2d_3+2d_2+2d_1 \rangle=\langle (-4)\rangle$$
$${\delta}_6 \,\,\,\,\,\hbox{and}\,\,\,\,\, \tilde{\delta}_6 \notin (D_5)_{D_6^*}^{\perp},\,\,\,\,\,\overline{\delta}_6 \in (D_5)_{D_6^*}^{\perp}$$
\item
$$(D_5)_{E_6}^{\perp}=\langle e_2,e_5,e_4,e_3,e_1 \rangle _{E_6}^{\perp}=\langle 3e_2+2e_1+4e_3+6e_4+5e_5+4e_6 \rangle =\langle (-12) \rangle$$ 
$$3\eta_6 =(-12)$$
\item
$$
\begin{array}{ll}
(D_5)_{E_7}^{\perp} &= \langle 2e_1+2e_2+3e_3+4e_4+3e_5+2e_6+e_7 \rangle \\
                   & \oplus \langle e_2+e_3+2e_4+2e_5+2e_6+2e_7 \rangle \\
                   &=A_1\oplus (-4)
\end{array}
$$
$$\eta_7\in (D_5)_{E_7^*}^{\perp},\,\,\,\,\,\,2\eta_7 \notin A_1$$
\item
$$(D_5)_{E_8}^{\perp}=A_3$$
\item 
$$(D_5 \oplus A_1 )_{D_7}^{\perp}=A_1=(d_6+d_7+2d_5+2d_4+2d_3+2d_2+d_1)$$
\item
$$
\begin{array}{ll}
(D_5\oplus A_1)_{D_8}^{\perp} & =\langle d_7+d_8+2d_6+2d_5+2d_4+2d_3+d_2\rangle \\
      & \oplus \langle d_7+d_8+2d_6+2d_5+2d_4+2d_3+2d_2+2d_1 \rangle \\
     & = A_1 \oplus (-4)
\end{array}
$$
$$4\delta_8 \notin (D_5 \oplus A_1)_{D_8}^{\perp},\,\,\,\,\,\overline{\delta}_8\notin A_1 $$
\item
$$
\begin{array}{lll}
(D_5 \oplus A_1 )_{D_9}^{\perp} & = & A_1\oplus A_1 \oplus A_1 \\
                                 &  = & (d_8+d_9+2d_7+2d_6+2d_5+2d_4+d_3)\\
  &  & \oplus (d_8+d_9+2d_7+2d_6+2d_5+2d_4+2d_3+2d_2+d_1)  \oplus (d_1)
\end{array}
$$
$$(D_5 \oplus A_1 )_{D_{10}}^{\perp}=A_1\oplus A_3 \,\,\,\,\,\,\,\,(D_5 \oplus A_1 )_{D_{12}}^{\perp}=A_1\oplus D_5$$
$$(D_5 \oplus A_1 )_{D_{16}}^{\perp}=A_1\oplus D_9 \,\,\,\,\,\,\,\,(D_5 \oplus A_1 )_{D_{24}}^{\perp}=A_1\oplus D_{17}$$
\item
$$(D_5\oplus A_1)_{E_7}^{\perp} = \langle 3e_2+2e_1+4e_3+6e_4+5e_5+4e_6+2e_7\rangle =\langle (-4)\rangle$$
$$2\eta_7 \notin (D_5 \oplus A_1)_{E_7}^{\perp}$$
\item
$$
\begin{array}{lll}
(D_5 \oplus A_1 )_{E_8}^{\perp} & = & A_1\oplus (-4) \\
                                 &  = & (3e_2+2e_1+4e_3+6e_4+5e_5+4e_6+3e_7+2e_8)\\
  &  & \oplus (3e_2+2e_1+4e_3+6e_4+5e_5+4e_6+2e_7)  
\end{array}
$$
\end{enumerate}

\end{proposition}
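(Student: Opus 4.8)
The plan is to establish all the stated identities by explicit computation inside the coordinate models of the root lattices fixed in Section 4, using the embeddings of $A_1$, $D_5$ and $D_5\oplus A_1$ produced in the preceding proposition. For an embedding $S\hookrightarrow L$ I would express a basis of $S$ (the chosen roots $a_i$, $d_i$, $e_i$) in $\epsilon$-coordinates and then solve the linear system $b(x,s)=0$, with $s$ ranging over that basis, for the vectors $x\in L$; these span $(S)^{\perp}_L$. In the $A_n$ case with $S=\langle a_n\rangle$ this is the single condition that the $\epsilon_n$- and $\epsilon_{n+1}$-coefficients agree, inside the hyperplane $\sum x_i=0$; in the $D_l$ cases the complement of $A_1=\langle d_l\rangle$ or of $D_5$ splits off along a complementary block of $\epsilon$-coordinates, which directly displays $A_1\oplus D_{n-2}$ and $D_{l-5}$; the $E_6$, $E_7$, $E_8$ cases are the same linear algebra carried out in the $\frac12$-integral model.

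Having produced a $\mathbb Z$-basis of each complement, the next step is to identify its isometry class. I would compute the Gram matrix of that basis and match it to the standard Dynkin Gram matrices of $A_k$, $D_k$, $E_k$, to the rank-one forms $\langle -4\rangle$, or to the exceptional form $L_{n-2}^2$ written out in item (1), whose extra generator is the norm $-6$ vector $a_n+2a_{n-1}$. As an independent check on the determinant I would use the primitivity of the embedding and the index relation $\det(S)\,\det\big((S)^{\perp}_L\big)=\det(L)\cdot[L:S\oplus(S)^{\perp}_L]^2$, which pins down $\det\big((S)^{\perp}_L\big)$. For the special case $L=E_8$, which is unimodular, Lemma 3 furnishes the clean check $q_{(S)^{\perp}_{E_8}}=-q_S$; for instance $(D_5)^{\perp}_{E_8}=A_3$ is consistent with $q_{A_3}=-q_{D_5}$. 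For the non-unimodular $A_n$, $D_l$, $E_6$, $E_7$ one relies instead on the explicit Gram matrix together with the index relation.

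The remaining content is the list of assertions about the glue vectors $\alpha_l$, $\delta_l$, $\overline\delta_l$, $\tilde\delta_l$, $\eta_6$, $\eta_7$: whether each such vector, or a prescribed integer multiple of it, lies in $(S)^{\perp}_{L^{*}}$ or in the root part $\big((S)^{\perp}_L\big)_{\mathrm{root}}$. Because every glue vector has an explicit expression in the $a_i$, $d_i$, $e_i$ (equivalently in $\epsilon$-coordinates) in Section 4, each assertion reduces to a finite pairing check $b(\text{glue},s)=0$ against the generators $s$ of $S$, followed by testing whether the resulting element is an integer combination of roots of the complement. These are precisely the data that will drive the torsion computation via Lemma 4, so for every case I would record both $\det\big((S)^{\perp}_L\big)$ and the surviving glue classes.

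The main obstacle is organizational rather than conceptual: the proposition bundles together a long list of separate orthogonal-complement computations, and two points require genuine care. First, one must correctly separate the root type from the non-root generators, so that in items (3)--(5) the root sublattice is read off as $A_{n-2}$ and not as the full $L_{n-2}^2$, and so that the $\langle -4\rangle$ summands appearing in the $D_6$, $E_7$, $D_8$ and $E_8$ complements are not mistaken for roots. Second, the $\frac12$-integral coordinates of $E_7$ and $E_8$ must be handled exactly, since any slip there corrupts both the determinant and the glue-vector memberships. Systematic bookkeeping, cross-checked at each step by the determinant index relation (and by Lemma 3 when $L=E_8$), removes both difficulties.
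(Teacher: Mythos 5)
Your proposal is correct, and every step you describe would go through: solving the linear systems in the $\epsilon$-coordinate models (and in the half-integral model for $E_7$, $E_8$) produces exactly the generators listed, the Gram-matrix identifications are routine, and the glue-vector memberships reduce to finite pairing checks against the expressions in Section 4. The difference from the paper is one of economy rather than substance: the paper does not re-derive the orthogonal complements at all --- it cites Nishiyama's paper for them --- and its entire written proof consists of the remark that the membership statements about $\eta_6$, $\eta_7$, $\delta_l$, $\overline{\delta}_l$, $\tilde{\delta}_l$, $\alpha_m$ follow immediately from the coordinate expressions of these glue vectors given in Section 4, which is precisely the finite pairing check you describe. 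So your route is a self-contained reconstruction of the computations the paper outsources. What it buys is independence from the reference, plus two safeguards the paper leaves implicit: the index relation $\det S\,\det\bigl((S)^{\perp}_L\bigr)=\det L\cdot[L:S\oplus (S)^{\perp}_L]^2$, which certifies that your spanning sets generate the full complement and not a finite-index sublattice (the most likely silent error in this kind of computation), and Nikulin's Lemma 3 as a discriminant-form cross-check in the unimodular case $L=E_8$. What it costs is length: roughly twenty separate coordinate computations that the paper compresses into a citation, and you would also need to take care that your bases, once found, are put in a form matching the displayed generators (e.g.\ recognizing $D_3\simeq A_3$ and separating the $\langle -4\rangle$ summands from the root part), a point your proposal already flags correctly.
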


\begin{proof}

The orthogonal complements are given in Nishiyama \cite{Nis} and the rest of the proof follows immediately from the various expressions of $\eta_7$, $\eta_6$, $\delta_l$, $\overline{\delta}_l$, $\tilde{\delta}$ and $\alpha_m$ given in section 4.

\end{proof}

Once the different types of fibrations are known, we get the rank of the Mordell-Weil group by 5.3.2.

To determine the torsion part we need to know appropriate generators of  $L/L_{\hbox{root}}$. 

\subsection{Generators of $L/L_{\hbox{root}}$}
By lemma 1, a set of generators can be described in terms of elements of $L^*/L$. We list in the following table the generators fitting to the corresponding $W$. We restrict to relevant $L_{\hbox{root}}$ according to proposition 2.

For convenience, the generators are given modulo $L_{\hbox{root}}$.

\begin{center}
\begin{tabular}{|l|c|}
\hline
   
\rule{0cm}{0.5cm}$L_{\hbox{root}}$ & $L/L_{\hbox{root}}$ \\ \hline
   
\rule{0cm}{0.5cm} $E_8^3$    &  $\langle(0) \rangle$\\ \hline
  
\rule{0cm}{0.5cm} $E_8 D_{16}$    &  $\langle \delta_{16} \rangle \simeq \mathbb Z /{2 \mathbb Z}$ \\ \hline
  
\rule{0cm}{0.5cm} $E_7^2 D_{10}$    &  $\langle \eta_7^{(1)}+\delta_{10}, \eta_7^{(2)}+\bar{\delta_{10}} \rangle \simeq (\mathbb Z /{2 \mathbb Z})^2$ \\ \hline
   
\rule{0cm}{0.5cm} $E_7 A_{17}$    &  $\langle \eta_7+3\alpha_{17} \rangle \simeq \mathbb Z /{6 \mathbb Z}$ \\ \hline
   
\rule{0cm}{0.5cm} $D_8^3$    &  $\langle \delta_8^{(1)}+\overline{\delta}_8^{(2)}+\overline{\delta}_8^{(3)}, \overline{\delta}_8^{(1)}+\delta_8^{(2)}+\overline{\delta}_8^{(3)},\overline{\delta}_8^{(1)}+\overline{\delta}_8^{(2)}+\delta_8^{(3)}\rangle \simeq (\mathbb Z /{2 \mathbb Z})^3$ \\ \hline
    
\rule{0cm}{0.5cm} $D_9 A_{15}$    &  $\langle \delta_9+2\alpha_{15} \rangle \simeq \mathbb Z /{8 \mathbb Z}$ \\ \hline
     
\rule{0cm}{0.5cm} $E_6^4$    &  $\langle \eta_6^{(1)}+\eta_6^{(2)}+\eta_6^{(3)}, 2\eta_6^{(1)}+\eta_6^{(3)}+\eta_6^{(4)} \rangle \simeq (\mathbb Z /{3 \mathbb Z)^2}$\\ \hline
    
\rule{0cm}{0.5cm} $ A_{11} E_6 D_7$    &  $ \langle \alpha_{11}+\eta_6+\delta_7 \rangle \simeq \mathbb Z /{12 \mathbb Z}$ \\ \hline
    
\rule{0cm}{0.5cm} $D_6^4$    &  $\langle \overline{\delta}_6^{(1)}+\overline{\delta}_6^{(4)},\overline{\delta}_6^{(2)}+\overline{\delta}_6^{(3)},\delta_6^{(1)}+\overline{\delta}_6^{(3)}+\delta_6^{(4)},\overline{\delta}_6^{(1)}+\overline{\delta}_6^{(2)}\rangle \simeq (\mathbb Z/2\mathbb Z)^4 $\\ \hline
    
\rule{0cm}{0.5cm} $D_6 A_9^2$    &  $\langle \delta_6+5\alpha_9^{(2)}, \delta_6+\alpha_9^{(1)}+2\alpha_9^{(2)} \rangle \simeq \mathbb Z /{2 \mathbb Z}\times \mathbb Z /{10 \mathbb Z} $ \\ \hline
    
\rule{0cm}{0.5cm} $D_5^2 A_7^2$    &  $\langle \delta_5^{(1)}+\delta_5^{(2)}+2\alpha_7^{(1)},\delta_5^{(1)}+2\delta_5^{(2)}+\alpha_7^{(1)}+\alpha_7^{(2)} \rangle \simeq \mathbb Z /{4 \mathbb Z}\times \mathbb Z /{8 \mathbb Z} $ \\ \hline

\end{tabular}
\end{center}

If the rank is $0$, we apply lemma $4$ ($1$) and $(2)$ to determine the torsion part of the Mordell-Weil group. Thus we recover the $14$ fibrations of rank $0$ exhibited by Shimada and Zhang \cite{Shim}.

For the other $16$ fibrations we apply proposition 5 and lemma 4 (3).

Recall that $\det  W =8$ and the torsion group is $\overline{W}_{\hbox{root}}/W_{\hbox{root}}$.

\subsection{$L_{\hbox{root}}=E_8D_{16}$}
 $$L/L_{\hbox{root}}=\langle \delta_{16} +L_{\hbox{root}}\rangle \simeq \mathbb Z /{2 \mathbb Z}$$

\subsubsection{Fibration $A_1D_{16}$}
It is obtained from the primitive embedding $D_5\oplus A_1\subset E_8$.
Since by proposition 5 (21)$(D_5\oplus A_1)_{E_8}^{\perp}=A_1\oplus (-4)$, $\hbox{det}\,N=8\times 4$, so $W/N \simeq \mathbb Z /2\mathbb Z \simeq L/L_{\hbox{root}}=\langle \delta_{16}+L_{\hbox{root}} \rangle$. Since $2\delta_{16}\in D_{16} =W_{\hbox{root}}$, thus $\delta_{16} \in \overline{W}_{\hbox{root}}$ and
$$\overline{W}_{\hbox{root}}/W_{\hbox{root}}= \mathbb Z /2\mathbb Z.$$

\subsection{$L_{\hbox{root}}=E_7^2D_{10}$}
 $$L/L_{\hbox{root}}=\langle \eta_7^{(1)}+\delta_{10}, \eta_7^{(2)}+\bar{\delta}_{10}\,\,{\hbox{mod.}}\,\,L_{\hbox{root}} \rangle \simeq (\mathbb Z /{2 \mathbb Z})^2 $$

\subsubsection {Fibration $A_1D_6D_{10}$}
It is obtained from the primitive embeddings $A_1 \subset E_7^{(1)}$ and $D_5 \subset E_7^{(2)}$.
Since by proposition 5 (10) and (15) $(A_1)_{E_7^{(1)}}^{\perp}=D_6$ and $(D_5)_{E_7^{(2)}}^{\perp}=A_1 \oplus (-4)$, we get $\hbox{det}\,N=8\times 4^2$ and $W/N\simeq (\mathbb Z/2 \mathbb Z )^2\simeq L/L_{\hbox{root}}$.

By proposition 5 (15), $\eta_7^{(1)} \in (D_5)_{E_7^{(1)*}}^{\perp}=A_1 \oplus (-4)$, but $2\eta_7 \notin A_1$ and by proposition 5 (10) $2\eta_7^{(1)} \in (A_1)_{E_7^{(1)}}^{\perp}=D_6$. So
$$\overline{W}_{\hbox{root}}/W_{\hbox{root}}=\langle \eta_7^{(1)}+\delta_{10}+W_{\hbox{root}} \rangle\simeq \mathbb Z /2\mathbb Z.$$

\subsubsection{Fibration $E_7D_{10}$}

It is obtained from $D_5\oplus A_1 \subset E_7^{(1)}$.

Since by proposition 5 (20) $(D_5\oplus A_1)_{E_7}^{\perp}=(-4)$, $\det N=8\times 4$ so $W/N \simeq \mathbb Z /2 \mathbb Z$.

Again by proposition 5 (20), $2\eta_7^{(1)} \notin (D_5\oplus A_1)_{E_7^{(1)}}^{\perp}$ and we get $W/N=\langle \eta_7^{(2)}+\bar{\delta}_{10}+N \rangle$. Since $2\eta_7^{(2)} \in E_7$ and $2\bar{\delta}_{10} \in D_{10}$, it follows

$$\overline{W}_{\hbox{root}}/W_{\hbox{root}}\simeq \mathbb Z /2\mathbb Z.$$

\subsubsection{Fibration $2A_1D_8E_7$}

It is obtained from the primitive embeddings $A_1\subset D_{10}$ and $D_5 \subset E_7^{(1)}$.

By proposition 5 (8) and (15), we get
$(A_1)_{D_{10}}^{\perp}=A_1 \oplus D_{8}$ and $(D_5)_{E_7^{(1)}}^{\perp}=A_1 \oplus (-4)$, so $\hbox{det}\, N=8\times 4^2$ and 
 $W/N\simeq (\mathbb Z/2 \mathbb Z )^2 \simeq L/L_{\hbox{root}}$.

By proposition 5 (15), $2\eta_7^{(1)}\notin A_1$ and by proposition 5 (8) $2\bar{\delta}_{10}\in A_1\oplus D_8$ so

$$\overline{W}_{\hbox{root}}/W_{\hbox{root}}\simeq \mathbb Z /2\mathbb Z.$$

\subsection{$L_{\hbox{root}}=E_7A_{17}$}

$$L/L_{\hbox{root}}=\langle \eta_7+3\alpha_{17}+L_{\hbox{root}} \rangle \simeq \mathbb Z/6 \mathbb Z$$

\subsubsection{Fibration $A_{17}$}
It is obtained from the primitive embedding $D_5\oplus A_1 \subset E_7$.

By proposition 5 (20), $(D_5\oplus A_1)_{E_7}^{\perp}=(-4)$, so $\hbox{det}\,N=8\times 9$ and $W/N \simeq \mathbb Z/3 \mathbb Z=\langle 6\alpha_{17} +N \rangle$.

Moreover, since $18\alpha_{17}\in A_{17}$, $6\alpha_{17}\in \overline{W}_{\hbox{root}}$ so

$$\overline{W}_{\hbox{root}}/W_{\hbox{root}}\simeq \mathbb Z /3\mathbb Z.$$

\subsubsection{Fibration $A_1A_{15}$}
It is obtained from the primitive embeddings $D_5 \subset E_7$ and $A_1 \subset A_{17}$.
By proposition 5 (15) and (1), $(D_5)_{E_7}^{\perp}=A_1\oplus (-4)$ and $(A_1)_{A_{17}}^{\perp}=L_{15}^2$ with $\det L_{15}^2=2\times 18$, so $\hbox{det}\,N=8\times 6^2$ and $W/N\simeq \mathbb Z/6 \mathbb Z \simeq L/L_{\hbox{root}}$.

But, by lemma 1, $W_{\hbox{root}}=A_1A_{15}$ has no overlattice. Hence

$$\overline{W}_{\hbox{root}}/W_{\hbox{root}}\simeq (0).$$

\subsection{$L_{\hbox{root}}=D_8^3$}

$$
\begin{array}{ll}
L/L_{\hbox{root}} & =\langle \delta_8^{(1)}+\overline {\delta}_8^{(2)}+\overline{\delta}_8^{(3)}, \overline{\delta}_8^{(1)}+ {\delta}_8^{(2)}+\overline{\delta}_8^{(3)},\overline{ \delta}_8^{(1)}+\overline {\delta}_8^{(2)}+{\delta}_8^{(3)}\rangle\,\, \hbox{mod.}\,\, L_{\hbox{root}}\\
       &   \simeq (\mathbb Z/2 \mathbb Z)^3
\end{array}
$$

\subsubsection{ Fibration $A_1 D_8 D_8$}

It comes from the primitive embedding $D_5\oplus A_1 \subset D_8^{(1)}$.

By proposition 5 (18), $(D_5\oplus A_1)_{D_8}^{\perp}=A_1\oplus (-4)$ so $\hbox{det}\,N=8\times 4^2$ and $W/N\simeq (\mathbb Z/2\mathbb Z)^2$.

By proposition 5 (18), $4\delta_8 \notin (D_5\oplus A_1)_{D_8}^{\perp}$ so $W/N=\langle  \overline{\delta}_8^{(1)}+ {\delta}_8^{(2)}+\overline{\delta}_8^{(3)}, \overline{\delta}_8^{(1)}+\overline {\delta}_8^{(2)}+{\delta}_8^{(3)}\rangle $. Again by proposition 5 (18) $\overline{\delta}_8 \notin A_1$, so only $2( \delta_8^{(2)}+\overline {\delta}_8^{(2)}+\overline{\delta}_8^{(3)}+\delta_8^{(3)})\in W_{\hbox{root}}$ and

$$\overline{W}_{\hbox{root}}/W_{\hbox{root}}\simeq \mathbb Z /2\mathbb Z.$$

\subsection{$L_{\hbox{root}}=D_9A_{15}$}

$$L/L_{\hbox{root}}=\langle \delta_9+2\alpha_{15}+L_{\hbox{root}} \rangle \simeq \mathbb Z/8 \mathbb Z$$

\subsubsection{ Fibration $D_4A_{13}$}
It comes from the primitive embeddings $D_5 \subset D_9$ and $A_1 \subset A_{15}$.

By proposition 5 (12) and (1), $(D_5)_{D_9}^{\perp}=D_4$, $(A_1)_{A_{15}}^{\perp}=L_{13}^2$ with $\hbox{det}\, L_{13}^2=2\times 16$ so $\hbox{det}\,N=8\times 16$ and $W/N \simeq \mathbb Z/4 \mathbb Z$.

But by lemma 1, $W_{\hbox{root}}=D_4A_{13}$ has no overlattice since $q_{A_{13}}(\alpha_{13})=\left (-\frac {1}{14} \right )$ and $q_{D_4}(\delta_4) \in \mathbb Z$. Hence

$$\overline{W}_{\hbox{root}}/W_{\hbox{root}}\simeq (0).$$

\subsection{$L_{\hbox{root}}=E_6^4$}

$$L/L_{\hbox{root}}=\langle \eta_6^{(1)}+\eta_6^{(2)}+\eta_6^{(3)}, 2\eta_6^{(1)}+\eta_6^{(3)}+\eta_6^{(4)} \rangle \,\,\hbox{mod.} L_{\hbox{root}} \,\,\simeq (\mathbb Z/3 \mathbb Z)^2$$

\subsubsection{Fibration $A_5 E_6 E_6$}

We can suppose the primitive embeddings $A_1 \subset E_6^{(1)}$ and $D_5 \subset E_6^{(2)}$.

By proposition 5 (9) and (14), $(A_1)_{E_6}^{\perp}=A_5$ and $(D_5)_{E_6}^{\perp}=(-12)$ so $\hbox{det}\,N=8\times 9^2$ and $W/N=(\mathbb Z/3 \mathbb Z)^2$.

By proposition 5 (9), $3\eta_6^{(1)} \in A_5$  so $2\eta_6^{(1)}+\eta_6^{(3)}+\eta_6^{(4)} \in \overline{W}_{\hbox{root}}$ but $\eta_6^{(1)}+\eta_6^{(2)}+\eta_6^{(3)} \notin \overline{W}_{\hbox{root}}$ by proposition 5 (14). Hence

$$\overline{W}_{\hbox{root}}/W_{\hbox{root}}\simeq \mathbb Z /3\mathbb Z.$$

\subsection{$L_{\hbox{root}}=A_{11} E_6 D_7$}
$$L/L_{\hbox{root}}= \langle \alpha_{11}+\eta_6+\delta_7+L_{\hbox{root}} \rangle \simeq \mathbb Z /12 \mathbb Z$$

\subsubsection{ Fibration $A_9 A_1 A_1 E_6$}
It follows from the primitive embeddings $A_1 \subset A_{11}$ and $D_5 \subset D_7$.

By proposition 5 (5) and (12), $(A_1)_{A_{11}}^{\perp}=L_9^2$, $\det L_9^2=2\times 12$, $(D_5)_{D_7}^{\perp}=D_2 \simeq A_1^{\oplus 2}$ so $\det N=8\times 6^2$ and $W/N \simeq \mathbb Z/6 \mathbb Z =\langle 2\alpha_{11}+2\eta_6+2\delta_7+N \rangle $.

Since $k(2\alpha_{11}) \notin A_9$ by proposition 5 (5), we get

$$\overline{W}_{\hbox{root}}/W_{\hbox{root}}= (0).$$

\subsubsection{Fibration $A_9 D_7$}

By lemma 1, it follows that $W_{\hbox{root}}=A_9 D_7$ has no overlattice since $q_{A_9}(\alpha _{9})=\left (-\frac {1}{10}\right )$ and $q_{D_7}=\left (-\frac {7}{4}\right )$. Hence

$$\overline{W}_{\hbox{root}}/W_{\hbox{root}}= (0).$$

\subsubsection{Fibration $A_{11} A_1 D_5$}
It comes from the primitive embeddings $D_5 \subset E_6$ and $A_1 \subset D_7$.

By proposition 5 (14) and (7), $(D_5)_{E_6}^{\perp} =(-12)$ and $(A_1)_{D_7}^{\perp}=A_1 \oplus D_5$, so $\det N=8\times 12^2$ and $W/N \simeq \mathbb Z / 12 \mathbb Z \simeq L/L_{\hbox{root}}$.

Since $W_{\hbox{root}}\cap E_6 = \emptyset $, we get also $\overline{W}_{\hbox{root}} \cap E_6 =\emptyset$.

Now $3\alpha_{11}+3\delta_7\in \overline{W}_{\hbox{root}}$ since $3\delta_7 \equiv \tilde{\delta}_7$ and $4(3\alpha_{11}+3\delta_7)\in W_{\hbox{root}}$. Hence

$$\overline{W}_{\hbox{root}}/W_{\hbox{root}}\simeq \mathbb Z /4\mathbb Z.$$

\subsection{$L_{\hbox{root}}=D_6^4$}

$$L/L_{\hbox{root}}=\langle \overline{\delta}_6^{(1)}+\overline{\delta}_6^{(4)},\overline{\delta}_6^{(2)}+\overline{\delta}_6^{(3)},\delta_6^{(1)}+\overline{\delta}_6^{(3)}+\delta_6^{(4)},\overline{\delta}_6^{(1)}+\overline{\delta}_6^{(2)}\rangle\,\, {\hbox{mod.}}\,\, L_{\hbox{root}} \simeq (\mathbb Z/2\mathbb Z)^4$$

\subsubsection{Fibration $A_1D_4D_6D_6$}

We assume the primitive embeddings $A_1 \subset D_6^{(1)}$ and $D_5 \subset D_6^{(2)}$.

By proposition 5 (13) and (6), $(A_1)_{D_6}^{\perp}=A_1\oplus D_4$, $(D_5)_{D_6}^{\perp}=(-4)$, so $\det N=8\times 8^2$ and $W/N\simeq (\mathbb Z/2 \mathbb Z)^3$.

After enumeration of all the elements of $L/L_{\hbox{root}}$, since by proposition 5 (13) and (6) $\overline{\delta}_6^{(2)}\in (D_5)_{D_6^*}^{\perp}$ and only $\overline{\delta}_6^{(1)}$ or $\tilde{\delta}_6^{(1)}\in(A_1)_{D_6^*}^{\perp}$, we get 

$$W/N=$$
$$\{\overline{\delta}_6^{(1)}+\overline{\delta}_6^{(4)},\overline{\delta}_6^{(2)}+\overline{\delta}_6^{(3)},\overline{\delta}_6^{(1)}+\overline{\delta}_6^{(2)},\overline{\delta}_6^{(2)}+\overline{\delta}_6^{(4)},\overline{\delta}_6^{(1)}+\overline{\delta}_6^{(3)},\overline{\delta}_6^{(3)}+\overline{\delta}_6^{(4)},\overline{\delta}_6^{(1)}+\overline{\delta}_6^{(2)}+\overline{\delta}_6^{(3)}+\overline{\delta}_6^{(4)}\}$$
$$\simeq (\mathbb Z/2 \mathbb Z)^3$$

As $2\overline{\delta}_6^{(2)}\notin W_{\hbox{root}}$ and $2\overline{\delta}_6^{(1)}\in A_1\oplus D_4$, it follows

$$\overline{W}_{\hbox{root}}/W_{\hbox{root}}=\{\overline{\delta}_6^{(1)}+\overline{\delta}_6^{(4)},\overline{\delta}_6^{(1)}+\overline{\delta}_6^{(3)},\overline{\delta}_6^{(3)}+\overline{\delta}_6^{(4)},0 \}$$
$$\simeq \mathbb Z/2 \mathbb Z \times \mathbb Z/2\mathbb Z $$

\subsection{$L_{\hbox{root}}=D_6A_9^2$}

$$L/L_{\hbox{root}}=\langle \delta_6+5\alpha_9^{(2)},\delta_6+\alpha_9^{(1)}+2\alpha_9^{(2)} \rangle\,\, \hbox{mod.} L_{\hbox{root}}\,\,\simeq \mathbb Z /2 \mathbb Z \oplus \mathbb Z/10 \mathbb Z $$

\subsubsection{Fibration $A_7A_9$}

We assume the following primitive embeddings $D_5\subset D_6$ and $A_1 \subset A_9^{(1)}$.

By proposition 5 (13) and (4), $(D_5)_{D_6}^{\perp}=(-4)$, $(A_1)_{A_9}^{\perp}=L_7^2$, $\det L_7^2=2\times 10$ so $\det N=8\times 10^2$ and $[W:N]=10$.

Enumerating the elements of $L/L_{\hbox{root}}$ and since $\delta_6 \notin (D_5)_{D_6^*}^{\perp}$ by proposition 5 (13), we get

$$
\begin{array}{lll}
W/N & = &\{ \alpha_9^{(1)}+7\alpha_9^{(2)}, 2\alpha_9^{(1)}+4\alpha_9^{(2)}, 3 \alpha_9^{(1)}+\alpha_9^{(2)},  4\alpha_9^{(1)}+8\alpha_9^{(2)},  5\alpha_9^{(1)}+5\alpha_9^{(2)}, \\
& &  6\alpha_9^{(1)}+2\alpha_9^{(2)}, 7 \alpha_9^{(1)}+9\alpha_9^{(2)}, 8 \alpha_9^{(1)}+6\alpha_9^{(2)}, 9 \alpha_9^{(1)}+3\alpha_9^{(2)}, 0 \}\\
 & \simeq & \mathbb Z /10 \mathbb Z
\end{array}
$$
Since $k\alpha_9^{(1)} \notin A_7$ by proposition 5 (4), it follows

$$\overline{W}_{\hbox{root}}/W_{\hbox{root}}=(0).$$

\subsection{$L_{\hbox{root}}=D_5^2A_7^2$}

$$L/L_{\hbox{root}}=\langle 2\alpha_7^{(1)}+\delta_5^{(1)}+\delta_5^{(2)}, \alpha_7^{(1)}+\alpha_7^{(2)}+\delta_5^{(1)}+2\delta_5^{(2)} \rangle \hbox{mod.}\,\, L_{\hbox{root}} \simeq \mathbb Z/4 \mathbb Z \times \mathbb Z /8 \mathbb Z$$

\subsubsection{ Fibration $D_5A_5A_7$}

We assume the following primitive embeddings $D_5 \subset D_5^{(2)}$ and $A_1 \subset A_7^{(1)}$.

By proposition 5 (3), $(A_1)_{A_7}^{\perp}=L_5^2$, $\det L_5^2=2\times 8$ so $\det N=8\times 8^2$ and $[W:N]=8$.

Now enumerating the elements of $L/L_{\hbox{root}}$ and since $\delta_5^{(2)}$ does not occur in $W$, we get $W/N=\langle 5\alpha_7^{(1)}+\alpha_7^{(2)}+3\delta_5^{(1)}+N \rangle$. Since by proposition 5 (3) $k\alpha_7^{(1)} \notin A_5$, it follows

$$\overline{W}_{\hbox{root}}/W_{\hbox{root}}=(0).$$

\end{proof}

\section{Equations of fibrations}

In the next sections we give  Weierstrass equations of all the
elliptic fibrations. We will use the following proposition (\cite{P} p.559-560 or \cite{SS} Prop. 12.10) .
\begin{proposition}
Let $X$ be a $K3$ surface and $D$ an effective divisor  on $X$ that has the same type as a singular fiber of an elliptic fibration.
Then X admits a unique elliptic fibration with D as a singular fiber. Moreover,
any irreducible curve $C$ on $X$ with $D.C = 1$ induces a section of the elliptic
fibration.
\end{proposition}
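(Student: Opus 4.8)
The plan is to realize the class of $D$ as the fiber class of a genus one fibration via a complete linear system, following \cite{P} and \cite{SS}. First I would extract the two numerical properties of $D$ that come directly from its being of the type of a Kodaira singular fiber. Writing $D=\sum_i m_i\Theta_i$, where the $\Theta_i$ are the smooth rational $(-2)$-curves forming an extended Dynkin diagram $\tilde A_n$, $\tilde D_l$ or $\tilde E_p$ and the $m_i$ are the Kodaira multiplicities displayed in the diagrams of Section 5, the multiplicities are exactly the coordinates of the kernel vector of the corresponding affine Cartan matrix. Hence $D\cdot\Theta_i=0$ for every component, which yields $D^2=\sum_i m_i(D\cdot\Theta_i)=0$; moreover $\gcd_i(m_i)=1$ for every Kodaira type, so $D$ is primitive in $\mathrm{NS}(X)$.

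Next I would check that $D$ is nef. For any irreducible curve $C'$ distinct from all the $\Theta_i$ one has $\Theta_i\cdot C'\ge 0$, whence $D\cdot C'=\sum_i m_i(\Theta_i\cdot C')\ge 0$ since every $m_i>0$; and $D\cdot\Theta_i=0$ by the previous step. Thus $D$ is a nef, primitive class with $D^2=0$. The heart of the argument is then the standard $K3$ statement that such a class is a fiber class of a genus one fibration. Concretely, Riemann--Roch on $X$ gives $h^0(D)\ge \tfrac{1}{2}D^2+2=2$ (with $h^2(D)=h^0(-D)=0$ because $D$ is effective and nonzero), and for a nef class with $D^2=0$ one shows $h^1(D)=0$ and that $|D|$ is base point free; the induced morphism $\phi_{|D|}\colon X\to\mathbb{P}^1$ has its fibers in $|D|$, and a general fiber is smooth of arithmetic genus $\tfrac{1}{2}D^2+1=1$. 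This is the genus one fibration we seek.

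Then I would identify $D$ as a singular fiber: since the support of $D$ is connected (the extended Dynkin diagram is connected) and $D$ is a reducible or non-reduced member of $|D|$ of self-intersection zero, it is a whole fiber of $\phi_{|D|}$, and by construction its dual graph is the prescribed Kodaira type. Uniqueness follows because any elliptic fibration having $D$ as a fiber must have fiber class $[D]$, and the morphism attached to a primitive nef class of square zero is determined up to an automorphism of the base; hence any such fibration coincides with $\phi_{|D|}$. For the section statement, an irreducible curve $C$ with $D\cdot C=1$ satisfies $C\cdot F=1$ for the fiber class $F=[D]$, so the restriction of $\phi_{|D|}$ to $C$ is a degree one morphism onto $\mathbb{P}^1$; it is therefore birational and induces a section, which upgrades the genus one fibration to an elliptic fibration in the sense used throughout the paper.

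The main obstacle I anticipate is the central $K3$ input, namely the passage from a primitive nef class $D$ with $D^2=0$ to a genuine base point free pencil defining a genus one fibration; this rests on the vanishing $h^1(D)=0$ and the absence of base points, which are features special to $K3$ surfaces and are precisely the content borrowed from \cite{P} and \cite{SS}. Everything else---the numerical identity $D^2=0$, nefness, primitivity, and the degree count producing the section---is elementary once this structural theorem is in hand.
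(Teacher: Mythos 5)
The paper offers no proof of its own for this proposition: it is quoted with a citation to \cite{P} (p.~559--560) and \cite{SS} (Prop.~12.10), and your argument is exactly the standard proof contained in those sources --- establish that $D$ is nef with $D^2=0$ and $\gcd$ of multiplicities $1$, invoke the $K3$-specific fact that such a class yields a base-point-free genus-one pencil via Riemann--Roch, identify $D$ as a whole fiber by connectedness and Zariski's lemma, and get the section from the degree count $C\cdot F=1$. Your reconstruction is correct and takes the same route; the only informal step, inferring primitivity of $[D]$ in $\mathrm{NS}(X)$ directly from $\gcd_i(m_i)=1$, is harmless because the connectedness/whole-fiber argument you give afterwards already forces $D$ to be a genuine fiber of the pencil.
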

First we show that one of the fibrations is the modular
elliptic surface with base curve the modular curve $X_1(8)$ corresponding to modular group  $\Gamma_{1}(8).$ As we see in the Table 1,  it
corresponds to the fibration $A_{1},A_{3},2A_{7}.$ The Mordell-group is a torsion group of order $8$. We determine a graph with the singular fibers $I_{2},I_{4},2I_{8}$ and the $8$-torsion
sections. Most divisors used in the previous proposition can be drawn on the graph.

 From this modular fibration we can  easily write a Weierstrass equation of two other  fibrations of parameters $k$ and $v$. From the singular fibers of these two fibrations we obtain the divisors of a set of functions on $Y_2$. These
functions generate a group whose horizontal divisors correspond to $8$-torsion
sections. These divisors lead to more fibrations.

If $X$ is a $K3$ surface and 
\[
\pi : X \rightarrow C
\]
an elliptic fibration, then the curve $C$ is of genus $0$ and we define an \textbf  {elliptic parameter} as a generator of the function field of $C.$ The parameter is not unique but defined up to  linear fractional transformations. 

From the previous proposition  we can obtain equations from the linear system of $D$.
Moreover if we have two effective divisors $D_{1}$ and $D_{2}$ for the same
fibration we can choose an elliptic parameter with divisor $D_{1}-D_{2}.$ We
give all the details for the fibrations of parameter $t$ and $\psi.$

For each elliptic fibration we will give a Weierstrass model numbered from $1$ to $30$, generally in the two variables $y$ and $x$. Parameters are denoted with small latine or greek letters. In most cases we give the change of variables that converts the defining equation into a Weierstrass form.
 Otherwise we use standard algorithms to obtain a Weierstrass form (see for example
\cite{Ca}). From a Weierstrass equation we get the singular fibers, using \cite{T} for example,  thus  the corresponding fibration in Table 1; so we know the rank and the torsion of the Mordell-Weil group. 
If the rank is $>0$ we give points and heights of points, which, using the formula of Proposition $1$,
generate the Mordell-Weil lattice. Heights are computed with Weierstrass equations as explained in \cite{Ku}.  
Alternatively we can compute heights as in \cite{Shio} and \cite{HL}.

\subsection{Equation of the modular surface associated to the modular group $\Gamma_{1}(8)$}

We start with the elliptic surface
\[
X+\frac {1}{X}+Y+\frac {1}{Y}=k.
\]

From Beauville's  classification \cite{Bea}, we know that it is the modular elliptic surface
corresponding to the modular group $\Gamma_{1}(4)\cap\Gamma_{0}(8).$ Using the birational
transformation
\[
X=\frac{-U\left(  U-1\right)  }{V}\text{ and }Y=\frac{V}{U-1}
\]
with inverse%
\[
U=-XY\text{ and }V=-Y\left(  XY+1\right)
\]
we obtain the Weierstrass equation
\[
V^{2}-kUV=U(U-1)^{2}.%
\]
The point $Q=\left(  U=1,V=0\right)  $ is a $4-$torsion point. If we want 
$A$ with $2A=Q$ to be a rational point, then $k=-s-1/s+2.$ It follows 
\begin{equation}
V^2+(s+\frac{1}{s}-2)UV=U(U-1)^2.
\label{s}
\end{equation}
and 
\[
X+\frac{1}{X}+Y+\frac{1}{Y}+s+\frac{1}{s}=2
\]
The point
$A=(U=s,V=-1+s)$ is of order $8.$ We obtain easily its multiples 
 {\small
\[%
\begin{array}
[c]{cccccccc}%
& A & 2A & 3A & 4A & 5A & 6A & 7A\\
\left(  X,Y\right)   & \left(  -s,1\right)   & \left(  \infty,0\right)   &
\left(  1,\frac{-1}{s}\right)   & \left(  0,0\right)   & \left(  \frac{-1}%
{s},1\right)   & \left(  0,\infty\right)   & \left(  1,-s\right)
\end{array}
\]
}

Thus we get an equation for the modular surface $Y_2$  associated to the modular group
$\Gamma_{1}(8)$ 
\[
Y_2:X+\frac{1}{X}+Y+\frac{1}{Y}+Z+\frac{1}{Z}-2=0.
\]
and the elliptic fibration 
\[
\left(  X,Y,Z\right)     \mapsto Z=s.
\]


Its singular fibers  are
\[%
\begin{array}
[c]{cccc}%
\text{at} & s=0 & \text{of type} & I_{8}\\
\text{at} & s=\infty & \text{of type} & I_{8}\\
\text{at} & s=1 & \text{ of type } & I_{4}\\
\text{at} & s=-1 & \text{of type} & I_{2}\\
\text{at} & s=3+2\sqrt{2} & \text{of type} & I_{1}\\
\text{at} & s=3-2\sqrt{2} & \text{of type} & I_{1}%
\end{array}
\]

\subsection{Construction of the graph from the modular fibration}

At $s=s_{0}$, we have a singular fiber of type $I_{n_{0}}.$ We denote
$\Theta_{s_{0},j}$ with $s_{0}\in\{0,\infty,1,-1\}$, $j\in\{0, \ldots ,n_{0}-1\}$ the
components of a singular fiber $I_{n_{0}}$ such that  $\Theta_{i,j}.$
$\Theta_{k,j}=0$ if $i\neq k$ and%
\[
\Theta_{i,j}.\Theta_{i,k}=\left\{
\begin{array}
[c]{l}%
1\text{ if }|k-j|=1 \,\text{or}\, |k-j|=n_0-1\\
-2\text{ if }k=j\\
0\text{ otherwise }
\end{array}
\right.
\]

 the dot meaning the intersection product. By definition, the component
$\Theta_{k,0}$ is met by the zero section $(0)$. The $n_0$-gone obtained can
be oriented in two ways for $n_0>2$. For each $s_{0}$ we want to know which component is cut by the section $(A)$, i.e.  the index
$j(A,s_{0})$ such that $A.\Theta_{s_{0},j\left(  A,s_{0}\right)  }=1.$ For
this, we compute the local height for the prime $s-s_{0}$ with the
Weierstrass equation. Since this height is also equal to $\frac{j(A,s_{0})\left(
n_{s_{0}}-j(A,s_{0})\right)  }{n_{s_{0}}}$ we can give an
orientation to the $n_{0}$-gone by choosing $0\leq$ $j(A,s_{0})\leq
\frac{n_{s_{0}}}{2}.$ Hence we get the following results: $j(A,0)=3$, $j(A,s_0)=1$ for $s_0 \neq 0.$


For  the other torsion-sections $\left(  iA\right)$ we use the algebraic
structure of the N\'{e}ron model and get $\left(  iA\right)  .\Theta_{0,j}=1$ if
$j=3i \mod 8$, $ \left(  iA\right)  .\Theta_{0,j}=0$ if $j\neq3i.$ For $s_{0}\in\{\infty,1,-1\}$ we have
$\left(  iA\right)  .\Theta_{s_{0},j}=1$ if $i=j \mod n_{0}.$
\begin{remark}
We can also compute $j(A,s_0)$ explicitly from equation of section using a new Weierstrass equation fitting with N\'{e}ron theorem (\cite{Ne} Theorem 1 and  prop. 5 p 96).
\end{remark}

Now we can draw the following graph. The vertices are the sections $\left(
iA\right) $ and the components $\Theta_{s_{0},i}$ with $s_{0}\in\{0,\infty
,1,-1\},$  $j\in\{0,1..n_{0}\}.$ Two vertices $B$ and $C$ are linked by an
edge if $B.C=1.$ 
For simplicity the two vertices $\Theta_{-1,0}$, $\Theta_{-1,1}$ and the edge between them  are not represented. The edges joining $\Theta_{-1,0}$ and $(jA)$, $j$ even  are suggested by a small segment from $(jA)$, and also edges from $\Theta_{-1,1}$ to $(iA)$, $i$ odd.



\begin{center}
\begin{tikzpicture}[scale=1.15]
\draw (3.535,3.535)-- (0,5);%
\draw (0,5)--(-3.535,3.535) ;%
\draw  (-3.535,3.535)--(-5,0);
\draw  (-5,0)--(-3.535,-3.535);
\draw  (-3.535,-3.535)--(0,-5);
\draw  (0,-5)--(3.5353,-3.5353);
\draw  (3.5353,-3.5353)--(5,0);
\draw  (5,0)--(3.535,3.535); 
\draw [fill=black](0,5) circle (0.05cm)node [above] {$\Theta_{\infty,2}$};
\draw [fill=black](-3.535,3.535) circle (0.05cm)node [above] {$\Theta_{\infty,3}$};
\draw [fill=black](3.535,3.535) circle (0.05cm)node [above] {$\Theta_{\infty,1}$};
\draw [fill=black](-5,0) circle (0.05cm)node [above] {$\Theta_{\infty,4}$};
\draw [fill=black](-3.535,-3.535) circle (0.05cm)node [below] {$\Theta_{\infty,5}$};
\draw [fill=black](0,-5) circle (0.05cm)node [left] {$\Theta_{\infty,6}$};
\draw [fill=black](3.5353,-3.5353) circle (0.05cm)node [right] {$\Theta_{\infty,7}$};
\draw [fill=black](5,0) circle (0.05cm)node [above] {$\Theta_{\infty,0}$};
\draw (0,3)--(-2.121,-2.121);
\draw (-2.121,-2.121)--(3,0);
\draw (3,0)--(-2.121,2.121);
\draw (-2.121,2.121)--(0,-3);
\draw (0,-3)--(2.121,2.121);
\draw (2.121,2.121)--(-3,0);
\draw (-3,0)--(2.121,-2.121);
\draw (2.121,-2.121)--(0,3);
\draw [fill=black](0,3)  circle (0.05cm)node [left] {$\Theta_{0,6}$};
\draw [fill=black] (-2.121,-2.121) circle (0.05cm)node[below] {$\Theta_{0,7}$};
\draw [fill=black] (3,0) circle (0.05cm)node[below] {$\Theta_{0,0}$};
\draw [fill=black] (-2.121,2.121) circle (0.05cm)node[right] {$\Theta_{0,1}$};
\draw [fill=black] (0,-3) circle (0.05cm)node[left] {$\Theta_{0,2}$};
\draw [fill=black]  (2.121,2.121)circle (0.05cm)node[right] {$\Theta_{0,3}$};
\draw [fill=black] (-3,0) circle (0.05cm)node[below] {$\Theta_{0,4}$};
\draw [fill=black]  (2.121,-2.121)circle (0.05cm)node[right] {$\Theta_{0,5}$};

\draw [fill=black](2.828,2.828) circle (0.05cm)node [below] {$A$};
\draw [red] (2.828,2.828)--(2.828,3.228);
\draw [fill=black](0,4)circle (0.05cm)node [below] {$2A$};
\draw [red] (0,4)--(0.4,4.4);
\draw [fill=black](-2.828,2.828)circle (0.05cm)node [below] {$3A$};
\draw [red] (-2.828,2.828)--(-2.828,3.228);
\draw [fill=black] (-4,0)circle (0.05cm)node [below] {$4A$};
\draw [red] (-4,0)--(-3.6,0.4);
\draw [fill=black](-2.828,-2.828) circle (0.05cm)node [below] {$5A$};
\draw[red](-2.828,-2.828)--(-2.828,-2.428);
\draw [fill=black](0,-4)circle (0.05cm)node [below] {$6A$};
\draw[red](0,-4)--(0.4,-3.6);
\draw [fill=black](2.828,-2.828)circle (0.05cm)node [below] {$7A$};
\draw[red](2.828,-2.828)--(2.828,-2.428);
\draw [fill=black](4,0) circle (0.05cm)node [below] {$0$};
\draw [red] (4.4,0.4)--(4,0);

\draw [red]  (0,4)--(0,3);
\draw [red]   (-2.828,2.828)--(-2.121,2.121);
\draw [red]   (4,0)--(3,0);
\draw [red]   (-2.828,-2.828)--(-2.121,-2.121);
\draw [red]   (2.828,-2.828)--(2.121,-2.121);
\draw [red]   (2.828,2.828)--(2.121,2.121);
\draw [red]   (-4,0)--(-3,0);
\draw [red]   (0,-4)--(0,-3);

\draw [red]   (0,4)--(0,5);%
\draw [red]   (-2.828,2.828)--(-3.535,3.535);
\draw [red]   (4,0)--(5,0);
\draw [red]   (-2.828,-2.828)--(-3.535,-3.535);
\draw [red]   (2.828,-2.828)--(3.5353,-3.5353);
\draw [red]   (2.828,2.828)--(3.535,3.535);
\draw [red]   (-4,-0)--(-5,0);
\draw [red]   (0,-4)--(0,-5);

\draw [fill=black](0.8,0) circle (0.05cm)node [below] {$\Theta_{1,2}$};
\draw [fill=black](0,0.8) circle (0.05cm)node [below] {$\Theta_{1,0}$};
\draw [fill=black](-0.8,0) circle (0.05cm)node [below] {$\Theta_{1,1}$};
\draw [fill=black](0,-0.8) circle (0.05cm)node [below] {$\Theta_{1,3}$};

\draw (0,0.8)--(-0.8,0);
\draw  (-0.8,0)--(0.8,0);
\draw (0.8,0)--(0,-0.8);
\draw (0,-0.8)--(0,0.8);

\draw [red] (4,0)--(0,0.8);
\draw[red](-4,0)--(0,0.8);
\draw[red](2.828,2.828)--(-0.8,0);
\draw [red](-2.828,-2.828)--(-0.8,0);
\draw[red](0,4)--(0.8,0);
\draw[red](0,-4)--(0.8,0);
\draw[red](-2.828,2.828)--(0,-0.8);
\draw[red](2.828,-2.828)--(0,-0.8);

\end{tikzpicture}%
\end{center} 

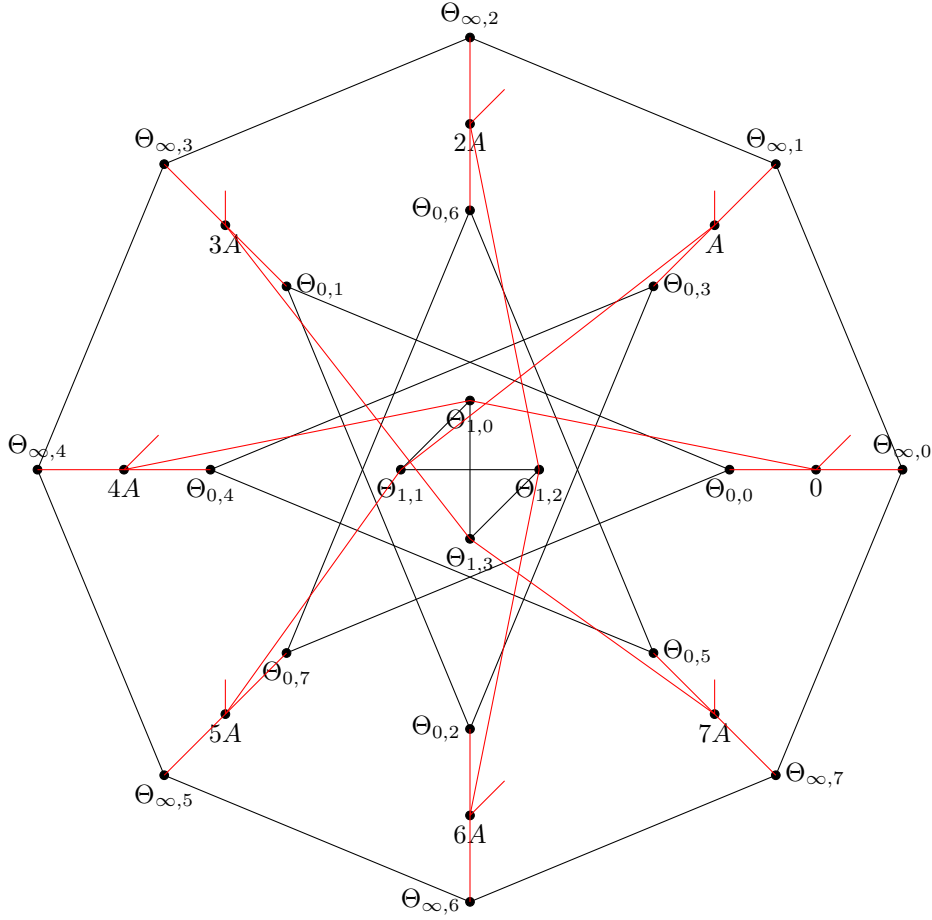
\captionof{figure}{Graph of bad fibers for $s=0,\infty,1,-1$ and torsion-sections}
\label{fig: G}

\subsection{Two fibrations}
For the two fibrations to be considered we use the following factorizations of the equation of the surface:

\[
(X+Y)(XY+1)Z+XY(Z-1)^2=0
\]
\[
(X+ZY)(XZ+Y)+(X+Y)(Y-1)(X-1)Z=0
\]
\subsubsection{Fibration of parameter $k$}
The parameter of the first one is $k=X+Y.$
Eliminating for example $X$, we obtain an equation of degree $2$ in $Y$ and $Z$; easily we have the equation
\begin{equation}
{y}^{2}-x\left ({k}^{2}-2\,k+2\right )y=x\left (x-1\right )\left (x-{k}^
{2}\right) 
\label{k}
\end{equation}
   
with  the birational transformation 
\[
Z={\frac {y}{k\left (x-1\right )}},Y=-{\frac {yk}{-y+{x}^{2}-x
}}
\]

with inverse 

\[
y={\frac {Zk\left (ZkY-Z{k}^{2}-Y\right )}{Y}},x={\frac {Zk
\left (Y-k\right )}{Y}}.
\]

The singular fibers of this fibration are
\[%
\begin{array}
[c]{cccc}%
\text{at} & k=0 & \text{of type} & I_{1}^*\\
\text{at} & k=\infty & \text{of type} & I_{12}\\
\text{at} & k=2 & \text{ of type } & I_{2}\\
\text{at} & k=4 & \text{of type} & I_{1}\\
\text{at} & k=2\,i & \text{of type} & I_{1}\\
\text{at} & k=-2\,i & \text{of type} & I_{1}%
\end{array}
\]
The rank of the Mordell-Weil group is one. The point $(x=1,y=0)$ is a non-torsion point of height $\frac{4}{3}$;  
the point $(0,0)$ is a two-torsion point and $(k,k)$ is of order $4$.

\subsubsection{Fibration of parameter $v$}
This fibration is obtained from the parameter $v=\frac{X+ZY}{Y-1}$. Eliminating $Y$ 
and using the birational transformation

\[
x={\frac {v
\left (v+X\right )\left (-v-1+Z\right )}{Z}},y=-{\frac {v\left (x-{v}^{3}-{v}^{2}\right )}{Z}}
\]
we get the equation
\begin{equation}
y^2+(v+1)^2yx-v^2(1+2v)y=(x-v)(x-v^2)(x-v^2-v^3).
\label{v}
\end{equation}


The singular fibers of this fibration are

\[%
\begin{array}
[c]{cccc}%
\text{at} & v=0 & \text{of type} & I_{8}\\
\text{at} & v=\infty & \text{of type} & I_{10}\\
\text{at} & v=v_0,  & \text{ of type } & I_{1}\\
\end{array}
\]
  
where $v_0$ denotes a root of the polynomial $t^6-5t^4+39t^2+2.$
  
The Mordell-Weil group is of rank two; the two points $(0,v^3),(v,0)$ are generators of the Mordell-Weil group  (the determinant of the heights matrix is $\frac{1}{10}$). The Mordell-Weil torsion-group is $0.$ 

\subsection{Divisors}
In this section we study the divisors of some functions. Using the elliptic fibration $(X,Y,Z)\mapsto Z=s$ 
we can compute the horizontal divisor of the following functions. 
We denote $(f)_h$ the horizontal divisor of $f$; then we have

\begin{eqnarray*}
(X)_h=-(0)-(2A)+(4A)+(6A)\\
(Y)_h=-(0)-(6A)+(4A)+(2A)\\
(X-1)_h=-(0)-(2A)+(3A)+(7A)\\
(Y-1)_h=-(0)-(6A)+(A)+(5A)\\
(X+s)_h=-(0)-(2A)+2(A)\\
(Y+s)_h=-(0)-(6A)+2(7A)\\
(X+1/s)_h=-(0)-(2A)+2(5A)\\
(Y+1/s)_h=-(0)-(6A)+2(3A)\\
(X+Y)_h=-(2A)-(6A)+2(4A)\\
(X+sY)_h=-(0)-(2A)-(6A)+(A)+(3A)+(4A)\\
\end{eqnarray*} 

\begin{proposition}
The horizontal divisors of the $7$ functions $X,Y,X-1,Y-1,Y+s,X+Y,X+sY$ generate the group of principal divisors with support in $8$-torsion sections.
\label{eq: Tor}
\end{proposition}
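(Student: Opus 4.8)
The plan is to identify the target group explicitly and then reduce the statement to a single determinant computation over $\mathbb Z$.

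Denote by $\Lambda\simeq\mathbb Z^{8}$ the free abelian group generated by the eight torsion sections $(0),(A),\dots,(7A)$; these account for all sections, since the Mordell--Weil group of the fibration $Z=s$ is the torsion group $\langle A\rangle\simeq\mathbb Z/8\mathbb Z$ (Table 1). I first describe the subgroup $\mathcal P\subset\Lambda$ of principal divisors, i.e.\ of those $D=\sum_{i=0}^{7}n_i\,(iA)$ that occur as a horizontal divisor $(f)_h$. Such a $D$ is a horizontal divisor exactly when its restriction to the generic fibre $E/\mathbb C(s)$ is principal on $E$: the ``only if'' is clear, and conversely a function on $E$ with the prescribed divisor is an element of $\mathbb C(Y_2)$ whose horizontal divisor is the closure of that divisor, namely $D$. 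By Abel's theorem on the elliptic curve $E$, principality amounts to $\deg D=0$ together with vanishing of the sum in $E(\mathbb C(s))=\langle A\rangle$. Since $A$ has order $8$, these two conditions read
\[
\sum_{i=0}^{7}n_i=0,\qquad \sum_{i=0}^{7}i\,n_i\equiv 0\pmod 8 .
\]
Hence $\mathcal P=\ker\Phi$ for the homomorphism $\Phi\colon\Lambda\to\mathbb Z\oplus\mathbb Z/8\mathbb Z$, $(n_i)\mapsto\bigl(\sum n_i,\sum i\,n_i\bigr)$. As $\Phi$ is surjective, $\mathcal P$ is free of rank $7$; and since $\Phi((0))=(1,0)$ spans the subgroup $\mathbb Z\oplus 0$ of index $8$, one gets $[\Lambda:\langle\mathcal P,(0)\rangle]=8$.

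Next I would write the seven horizontal divisors of the statement in the basis $(0),(A),\dots,(7A)$ and check, for each, the two conditions above; this is immediate from the list of divisors computed just before the proposition, so all seven lie in $\mathcal P$.

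It then remains to show that the subgroup $\mathcal G$ they generate is all of $\mathcal P$. For this I would form the $8\times 8$ integer matrix $M$ whose rows are the seven divisor vectors together with the extra vector $(0)$. Because $(0)\notin\mathcal P\otimes\mathbb Q=\{\sum x_i=0\}$, adjoining it to $\mathcal G$ (respectively to $\mathcal P$) gives a direct sum, so
\[
|\det M|=[\Lambda:\langle\mathcal G,(0)\rangle]=[\Lambda:\langle\mathcal P,(0)\rangle]\,[\mathcal P:\mathcal G]=8\,[\mathcal P:\mathcal G].
\]
Several columns of $M$ (those indexed by $(5A)$, by $(A)$, by $(3A)$, \dots) have only one or two non-zero entries, so a short cofactor expansion yields $\det M=\pm 8$ (in particular the seven vectors are independent). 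Therefore $[\mathcal P:\mathcal G]=1$, i.e.\ $\mathcal G=\mathcal P$, which is the assertion.

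The only genuinely substantive step is the first: the reduction of ``principal with support in the torsion sections'' to the two numerical conditions, resting on Abel's theorem applied to the generic fibre and on the knowledge that this fibration has Mordell--Weil group exactly $\mathbb Z/8\mathbb Z$ generated by $A$. Once the identification $\mathcal P=\ker\Phi$ is in place, the remainder is the routine rank-$7$ lattice computation carried out via the determinant $\det M=\pm 8$.
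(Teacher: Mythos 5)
Your proof is correct and follows essentially the same route as the paper: the paper's one-line proof computes the determinant of the $7\times 7$ matrix $(m_{i,j})=(\mathrm{ord}_{iA}(f_j))_{1\le i,j\le 7}$ and finds it equal to $8$, which is exactly your $8\times 8$ determinant after expanding along the row of the adjoined vector $(0)$. The identification of principal divisors supported on the torsion sections with $\ker\Phi$ (via Abel's theorem on the generic fibre and the index-$8$ count), which you spell out, is precisely the justification the paper leaves implicit.
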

\begin{proof}
If we write $f_1,f_2,..,f_7$ for these functions, then  
the determinant of the matrix $(m_{i,j})$ with $m_{i,j}=ord_{iA}(f_j)$, $1  \leq  i,j \leq 7$,  is equal to $8.$ 
\end{proof}
We use the following notations: $Div(f)$ for the divisor of the function $f$ on the surface, $(f)_0$ for the divisor of the zeros of $f$ 
and $(f)_{\infty}$ for the divisor of the poles.
We get 
\[Div(Z)=\sum_{i=0}^7\Theta_{0,i}-\sum_{i=0}^7\Theta_{\infty,i}\]
\[(Z-1)_0=\sum_{i=0}^3\Theta_{1,i}\]
\[(Z+1)_0=\sum_{i=0}^1\Theta_{-1,i}.\]
 Since $X,Y,Z$ play the same role, the elliptic fibrations $(X,Y,Z)\mapsto X$ and also $(X,Y,Z)\mapsto Y$ have the same property for the singular  fibers: two singular fibers of type $I_8$ for $X=0,\infty$ and $Y=0,\infty$, one singular fiber of type $I_4$ for $X=1$, $Y=1$.  Then we can represent on the graph  the divisor of $X$, drawing two disjoint $8-$gones going throught $(0),(2A)$ and $(4A),(6A)$ and a disjoint $4-$gone throught $(3A),(7A)$. 
 We have 
 \begin{eqnarray*}
Div(X)=-(0)-\Theta_{\infty,0}-\Theta_{\infty,1}-\Theta_{\infty,2}-(2A)-\Theta_{0,6}-\Theta_{0,7}-\Theta_{0,0}\\
+(4A)+\Theta_{\infty,4}+\Theta_{\infty,5}+\Theta_{\infty,6}+(6A)+\Theta_{0,2}+\Theta_{0,3}+\Theta_{0,4}\\
\end{eqnarray*}
\begin{eqnarray*}
(X-1)_0=(3A)+\Theta_{1,3}+(7A)+\Theta_{-1,1}.
\end{eqnarray*}
A similar calculation for $Y$ gives
 \begin{eqnarray*}
Div(Y)=-(0)-\Theta_{\infty,0}-\Theta_{\infty,7}-\Theta_{\infty,6}-(6A)-\Theta_{0,2}-\Theta_{0,1}-\Theta_{0,0}\\
+(4A)+\Theta_{\infty,4}+\Theta_{\infty,3}+\Theta_{\infty,2}+(2A)+\Theta_{0,6}+\Theta_{0,5}+\Theta_{0,4}.
\end{eqnarray*}

\begin{eqnarray*}
(Y-1)_0=(A)+\Theta_{1,1}+(5A)+\Theta_{-1,1}.
\end{eqnarray*}
The fibration $(X,Y,Z)\mapsto k=X+Y$ has singular fibers of type $I_1^*,I_{12}$ at $k=0$ and $k=\infty$, so we can write the divisor of $X+Y.$
By permutation  we have also the divisors of $Y+Z$ and $X+Z$ 

\begin{eqnarray*}
Div(X+Y)=-(2A)-\Theta_{\infty,2}-\Theta_{\infty,1}-\Theta_{\infty,0}-\Theta_{\infty,7}-\Theta_{\infty,6}           \\
-(6A)-\Theta_{0,6}-\Theta_{0,7}-\Theta_{0,0}
-\Theta_{0,1}-\Theta_{0,2}\\
+\Theta_{\infty,4}+\Theta_{0,4}+2(4A)+2\Theta_{1,0}+\Theta_{1,1}+\Theta_{1,2}\\
Div(X+Z)=-(0)-\Theta_{\infty,0}-\Theta_{\infty,7}-\Theta_{\infty,6}-\Theta_{\infty,5}-\Theta_{\infty,4}\\
-\Theta_{\infty,3}-\Theta_{\infty,2}-(2A)-\Theta_{0,6}-\Theta_{0,7}-\Theta_{0,0} \\
+\Theta_{1,1}+\Theta_{-1,1}+2(A)+2\Theta_{0,3}+\Theta_{0,2}+\Theta_{0,4}\\
Div(Y+Z)=-(0)-\Theta_{\infty,0}-\Theta_{\infty,1}-\Theta_{\infty,2}-\Theta_{\infty,3}-\Theta_{\infty,4}\\
-\Theta_{\infty,5}-\Theta_{\infty,6}-(6A)-\Theta_{0,0}-\Theta_{0,1}-\Theta_{0,2}\\
+\Theta_{1,3}+\Theta_{-1,1}+2(7A)+2\Theta_{0,5}+\Theta_{0,4}+\Theta_{0,6}.
\end{eqnarray*}

At last the fibration $(X,Y,Z) \mapsto v=\frac{(X+ZY)}{(Y-1)}$ has two singular fibers of type $I_8,I_{12}$ at $v=0$ and $v=\infty$;
thus it follows 
\begin{eqnarray*}
Div(\frac{X+ZY}{Y-1})=(3A)+\Theta_{1,0}+\Theta_{1,3}+4A
+\Theta_{0,4}+\Theta_{0,3}+\Theta_{0,2}+\Theta_{0,1}\\
-(2A)-\Theta_{\infty,2}-\Theta_{\infty,1}-\Theta_{\infty,0}-\Theta_{\infty,7}
-\Theta_{\infty,6}-\Theta_{\infty,5}-(5A)-\Theta_{0,7}-\Theta_{0,6}.\\
\end{eqnarray*}

\section{Fibrations from the modular fibration}
We give a first set of elliptic fibrations with elliptic parameters belonging to the multiplicative group of functions coming from Proposition \ref{eq: Tor} plus $Z$ and $Z\pm 1$. The first ones come from some easy linear combination of divisors of functions.
The others, like $t$, come from the following remark. We can draw, on Figure  \ref{fig: G}, two disjoint subgraphs corresponding to singular fibers of the same fibration.  
We give all the details only in the case of parameter $t$. 
\subsubsection{Fibration of parameter $a$}

This  fibration is obtained with the parameter $a=\frac{Z-1}{X+Y}$. Eliminating  $Z$ in the equation and doing the birational transformation
\[
Y=-{\frac {y\left (1+a\right )}{x+xa-1}},X={\frac {x\left (x+x
a-1\right )}{y\left (1+a\right )}}
\]
with inverse

\[
y={\frac {Y\left (XY+XYa+1\right )}{1+a}},x=-XY
\]
 we get 
\begin{equation}
{y}^{2}-{\frac {\left (x-1\right )y}{\left (1+a\right )a}}
=x(x-\frac{1}{1+a})^2.
\label{a}
\end{equation}
The singular fibers of this fibration are
\[%
\begin{array}
[c]{cccc}%
\text{at} & a=0 & \text{of type} & I_{8}\\
\text{at} & a=\infty & \text{of type} & I_{1}^*\\
\text{at} & a=-1 & \text{of type} & I_{6}\\
\text{at} & a=a_0  & \text{ of type } & I_{1}\\
\end{array}
\]
  
with $a_0$  root of the ploynomial $16X^3+11X^2-2X+1.$

The point $(x=\frac{1}{1+a},y=0)$ is of height $\frac{1}{24}$.
The torsion-group of the Mordell-Weil group is $0$.
\subsubsection{Fibration of parameter $d$}
This fibration is obtained with parameter $d=X Y$ which also is equal to $-x$ in the previous Weierstrass equation.
Eliminating $X$ and making the birational transformation
\[
y=-\left (d+1\right )Y\left ({d}^{2}-x\right ),x=-ZYd\left (d+
1\right )
\]

we get

\begin{equation}
y^2-2d\,y\,x=x(x-d^2)(x-d(d+1)^2).
\label{d}
\end{equation}
The singular fibers are
\[
\begin{array}
[c]{cccc}%
\text{at} & d=0 & \text{of type} & I_{2}^*\\
\text{at} & d=\infty & \text{of type} & I_{2}^*\\
\text{at} & d=1  & \text{ of type } & I_{2}\\
\text{at} & d=-1 & \text{of type} & I_{0}^*\\
\end{array}
\]
The three points of abscisses $0,d+d^2,d^3+d^2$ are two-torsion points. The point 
$(d^2,0)$ is of height $1$.

\subsubsection{Fibration of parameter $p$}
This fibration is obtained with $p=\frac{(XY+1)Z}{X}=\frac{Vs}{U}$ which is also equal to $x/d^2$ with notation of the previous fibration.  
We start from the equation in $U,V$ and eliminating $V$ and making the birational transformation
\[
s=\frac{xp(p+1)}{y+xp},U=\frac{x}{p(p+1)}
\]
we obtain

\begin{equation}
y^2=x(x-p)(x-p(p+1)^2).
\label{p}
\end{equation}

The singular fibers are
\[
\begin{array}
[c]{cccc}%
\text{at} & p=0 & \text{of type} & I_{2}^*\\
\text{at} & p=\infty & \text{of type} & I_{4}^*\\
\text{at} & p=-2  & \text{ of type } & I_{2}\\
\text{at} & p=-1 & \text{of type} & I_{4}\\
\end{array}
\]
The Mordell-Weil group is isomorphic to $(\mathbb Z/2\mathbb Z)^2$.
\subsubsection{Fibration of parameter $w$}
Using the factorisation of the equation of $Y_2$ 
\[
\left (Z+X\right )\left (X+Y\right )\left (X-1\right )=X\left (YZ+X
\right )\left (X+Y+Z-1\right )
\]
we put $w=X+Y+Z-1=\frac{(X+Y)(X+Z)(X-1)}{X(YZ+X)}.$
Eliminating $Z$ in the equation of $Y_2$ and doing the birational transformation
\[
x=-\left (1-Y+wY\right )\left (1-X+wX\right ),y=-\left (w-1
\right )Xx
\]
we obtain the equation
\begin{equation}
y^2+w^2(x+1)y=x(x+1)(x+w^2).
\label{w}
\end{equation}
The singular fibers are
\[
\begin{array}
[c]{cccc}%
\text{at} & w=0 & \text{of type} & I_{6}\\
\text{at} & w=\infty & \text{of type} & I_{12}\\
\text{at} & w=1  & \text{ of type } & I_{2}\\
\text{at} & w=-1 & \text{of type} & I_{2}\\
\text{at} & w=\pm 2i\sqrt 2 & \text{of type} & I_{1}\\
\end{array}
\] 
The Mordell-Weil group is isomorphic to $\mathbb Z/6 \mathbb Z$, generated by $(x=-w^2,y=0).$

\subsubsection{Fibration of parameter $b$}
We put $b=\frac{XY}{Z}$. Eliminating $Z$ in the equation of $Y_2$ and using the birational transformation 

\[
x=-\frac{b(Y+b)(X+b)}{X},y=-Yb(x-(b+1)^2)
\]
we obtain the equation
\begin{equation}
y^2+2(b+1)xy+b^2(b+1)^2y=x(x+b^2)(x-(b+1)^2)
\label{b}
\end{equation}
 or with $z=x+y$ 
 
\[
z^2+2bzx+b^2(b+1)^2z=x^3.
\] 
The singular fibers are
\[
\begin{array}
[c]{cccc}%
\text{at} & b=0 & \text{of type} & IV^*\\
\text{at} & b=\infty & \text{of type} & IV^*\\
\text{at} & b=-1  & \text{ of type } & I_{6}\\
\text{at} & b=b_0 & \text{of type} & I_{1}\\
\end{array}
\]  
with $b_0$  root of the polynomial $27b^2+46b+27$.
The Mordell-Weil group is of rank $1$, the point $(x=-b^2,y=0)$ is of height $\frac{4}{3}$ and the torsion-group is of order $3$.  

\subsubsection{Fibration of parameter $r$}
Let $r=\frac{(X+Z)(Y+Z)}{ZX}$, $r$ is also equal to $x$ in the fibration of parameter $b$. Eliminating $Y$ in the equation of $Y_2$ and doing  the birational transformation.
\[
x=-{\frac {\left (rX-X-Z\right )r}{Z}},y=-{\frac {X\left (x-{r
}^{3}\right )x\left (r-1\right )}{-{r}^{2}+x}}
\] 
we obtain the equation
\begin{equation}
y^2+2(r-1)xy=x(x-1)(x-r^3).
\label{r}
\end{equation}
The singular fibers are
\[
\begin{array}
[c]{cccc}%
\text{at} & r=0 & \text{of type} & I_{2}^*\\
\text{at} & r=\infty & \text{of type} & I_{6}^*\\
\text{at} & r=1  & \text{ of type } & I_{2}\\
\text{at} & r=\pm 2i & \text{of type} & I_{1}\\
\end{array}
\]
The Mordell-Weil group is of rank $1$, the point $(1,0)$ is of height $1$. The torsion group is of order $2$, 
generated by $(0,0).$

\subsubsection{Fibration of parameter $e$}
Let $e=\frac{YX}{(Y+Z)Z}$, $e$ is also equal to $-\frac{x}{r^2}$, where $x$ is from equation (\ref{r}). 
 Eliminating $Y$ from the equation of $Y_2$ and doing the birational transformation
\[
y={\frac {-\left (2\,{e}^{2}+e+x\right )(e\left (x-2\,e-1\right )X-x\left (e+1\right ))}{\left (2\,e+1\right )\left (e+1\right )}}, x={\frac 
{-e\left (2\,e+1\right )\left (-Ze+X\right )}{X+Z}}
\] 
we obtain the equation
\begin{equation}
y^2=x(x^2-e^2(e-1)x+e^3(2e+1)).
\label{e}
\end{equation}
The singular fibers are
\[
\begin{array}
[c]{cccc}%
\text{at} & e=0 & \text{of type} & III^*\\
\text{at} & e=\infty & \text{of type} & I_{4}^*\\
\text{at} & e=-1  & \text{ of type } & I_{2}\\
\text{at} & e=-1/2 & \text{of type} & I_{2}\\
\text{at} & e=4 & \text{of type} & I_{1}\\
\end{array}
\]
The Mordell-Weil group is of rank $1$, the point $(e^3,e^3+e^4)$ is of height $1$. The torsion group is of order $2$, 
generated by $(0,0).$


\subsubsection{Fibration of parameter $f$}
Let $f={\frac {Y\left (X+Z\right )^{2}\left (Z+Y\right )}{{Z}^{3}X}}$, $f$ is also equal to $-x$ where $x$ is from (\ref{r}). 
We start from (\ref{r}) and use the transformation 
\[
y=\frac{V'}{x(x-1)},r=-\frac{U'}{x(x-1)}
\]
we obtain the equation
\begin{equation}
V'^2-2fV'U'-2f^2(f-1)V'=U'^3+f^4(f-1)^3.
\label{f}
\end{equation}
The singular fibers are
\[
\begin{array}
[c]{cccc}%
\text{at} & f=0 & \text{of type} & III^*\\
\text{at} & f=\infty & \text{of type} & II^*\\
\text{at} & f=1  & \text{ of type } & I_{4}\\
\text{at} & f=32/27 & \text{of type} & I_{1}\\
\end{array}
\]
The Mordell-Weil group is $0$.
\subsubsection{Fibration of parameter $g$}
Let $g=\frac{XY}{Z^2}$, Eliminating $Y$ in the equation of $Y_2$ and using 
the birational transformation

\[
y=-{\frac {\left (g^2-1\right )\left (-gXZ-g{Z
}^{2}-{X}^{2}+gX{Z}^{2}\right )g}{Z\left (X+Z\right )^{2}}},-x={\frac 
{g\left (g+1\right )\left (Zg+X\right )}{X+Z}}
\]
we obtain the equation
\begin{equation}
y^2=x^3+4g^2x^2+g^3(g+1)^2x.
\label{g}
\end{equation}
The singular fibers are
\[
\begin{array}
[c]{cccc}%
\text{at} & g=0 & \text{of type} & III^*\\
\text{at} & g=\infty & \text{of type} & III^*\\
\text{at} & g=-1  & \text{ of type } & I_{4}\\
\text{at} & g=1 & \text{of type} & I_{2}\\
\end{array}
\]
The Mordell-Weil group is of order $2$.
\subsubsection{Fibration of parameter $h$}
Let $h=\frac{(Y+Z)YX^2}{Z^3(X+Z)}$, we can see that $h=\frac{x}{(g+1)}$, with $x$ from (\ref{g}).
We start from (\ref{g}) and if $y=(g+1)z$ we obtain a quartic equation in $z$ and $g$ with 
 rational points $g=-1,z=\pm 2h$. Using standard transformation we obtain
 
 \[
 y^{\prime 2}+(h-\frac{1}{h}-8)x^{\prime }y^{\prime }-\frac{96}{h}y^{\prime }=\left( x^{\prime }-\frac{1}{4}(h^{2}+\frac{1}{h^{2}})+4h+\frac{8}{h}+\frac{1}{2}\right) \left( x^{\prime 2}-\frac{256}{h}\right)
 \]
or  also 

\begin{equation}
y^2=x^3-\frac{25}{3}x-h-\frac{1}{h}-\frac{196}{27}.
\label{h}
\end{equation}
The singular fibers are
\[
\begin{array}
[c]{cccc}%
\text{at} & h=0 & \text{of type} & II^*\\
\text{at} & h=\infty & \text{of type} & II^*\\
\text{at} & h=-1  & \text{ of type } & I_{2}\\
\text{at} & h=h_0 & \text{of type} & I_{1}\\
\end{array}
\]
where $h_0$ is a root of the polynomial $27h^2-446h+27$. The Mordell-Weil group is of rank $1$ without torsion.

The point $(1/16\,{h}^{2}+1/16\,{h}^{-2}+h+{h}^{-1}+{\frac {29}{24}},{\frac {1}{
64}}\,{\frac {\left (h-1\right )\left (h+1\right )\left ({h}^{4}+24\,{
h}^{3}+126\,{h}^{2}+24\,h+1\right )}{{h}^{3}}})$ is of height $4$.
We recover Elkies' result \cite{El2} cited in the introduction.

\subsubsection{Fibration of parameter $t$}

On the graph (Figure \ref{fig: G}), we can see two singular fibers of type $I_{4}^*$ of a new fibration (Figure \ref{fig: H}). They correspond to two divisors $D_1$ and $D_2$ with 
\[
D_1=\Theta_{\infty,4}+(3A)+2\Theta_{\infty,3}+2\Theta_{\infty,2}+2\Theta_{\infty,1}+2\Theta_{\infty,0}
+2(0)+\Theta_{0,0}+\Theta_{1,0}
\]
\[
D_2=(7A)+\Theta_{0,6}+2\Theta_{0,5}+2\Theta_{0,4}+2\Theta_{0,3}+2\Theta_{0,2}+2(6A)+
\Theta_{\infty,6}+\Theta_{1,2}.
\]
 We  look for  a parameter of the new fibration as a function $t$ with divisor $D_1-D_2$.


\begin{center}
\begin{tikzpicture}[scale=1]%

\draw [ thick,color=red](3.535,3.535)-- (0,5);%
\draw [thick,color=red](0,5)--(-3.535,3.535) ;%
\draw  [thick,color=red](-3.535,3.535)--(-5,0);
\draw [thick,color=red] (5,0)--(3.535,3.535); 
\draw [fill=black](0,5) circle (0.05cm)node [above] {$\Theta_{\infty,2}$};
\draw [fill=black](-3.535,3.535) circle (0.05cm)node [above] {$\Theta_{\infty,3}$};
\draw [fill=black](3.535,3.535) circle (0.05cm)node [above] {$\Theta_{\infty,1}$};
\draw [fill=black](-5,0) circle (0.05cm)node [left] {$\Theta_{\infty,4}$};
\draw [fill=black](0,-5) circle (0.05cm)node [left] {$\Theta_{\infty,6}$};
\draw [fill=black](5,0) circle (0.05cm)node [right] {$\Theta_{\infty,0}$};
\draw [ thick,dashed,color=blue](0,-3)--(2.121,2.121);
\draw [ thick,dashed,color=blue](2.121,2.121)--(-3,0);
\draw [ thick,dashed,color=blue](-3,0)--(2.121,-2.121);
\draw [ thick,dashed,color=blue](2.121,-2.121)--(0,3);
\draw [fill=black](0,3)  circle (0.05cm)node [left] {$\Theta_{0,6}$};
\draw [fill=black] (3,0) circle (0.05cm)node[below] {$\Theta_{0,0}$};
\draw [fill=black] (0,-3) circle (0.05cm)node[left] {$\Theta_{0,2}$};
\draw [fill=black]  (2.121,2.121)circle (0.05cm)node[right] {$\Theta_{0,3}$};
\draw [fill=black] (-3,0) circle (0.05cm)node[below] {$\Theta_{0,4}$};
\draw [fill=black]  (2.121,-2.121)circle (0.05cm)node[right] {$\Theta_{0,5}$};

\draw [fill=black](-2.828,2.828)circle (0.05cm)node [below] {$3A$};
\draw [fill=black](0,-4)circle (0.05cm)node [below] {$6A$};
\draw [fill=black](2.828,-2.828)circle (0.05cm)node [below] {$7A$};
\draw [fill=black](4,0) circle (0.05cm)node [below] {$0$};

\draw [thick,color=red]   (4,0)--(3,0);
\draw [ thick,dashed,color=blue]   (2.828,-2.828)--(2.121,-2.121);
\draw [ thick,dashed,color=blue]   (0,-4)--(0,-3);

\draw [ thick,color=red]   (-2.828,2.828)--(-3.535,3.535);
\draw [thick,color=red]   (4,0)--(5,0);
\draw [ thick,dashed,color=blue]   (0,-4)--(0,-5);

\draw [fill=black](0.8,0) circle (0.05cm)node [below] {$\Theta_{1,2}$};
\draw [fill=black](0,0.8) circle (0.05cm)node [below] {$\Theta_{1,0}$};


\draw [thick,color=red] (4,0)--(0,0.8);
\draw[ thick,dashed,color=blue](0,-4)--(0.8,0);
\end{tikzpicture}%
\end{center}

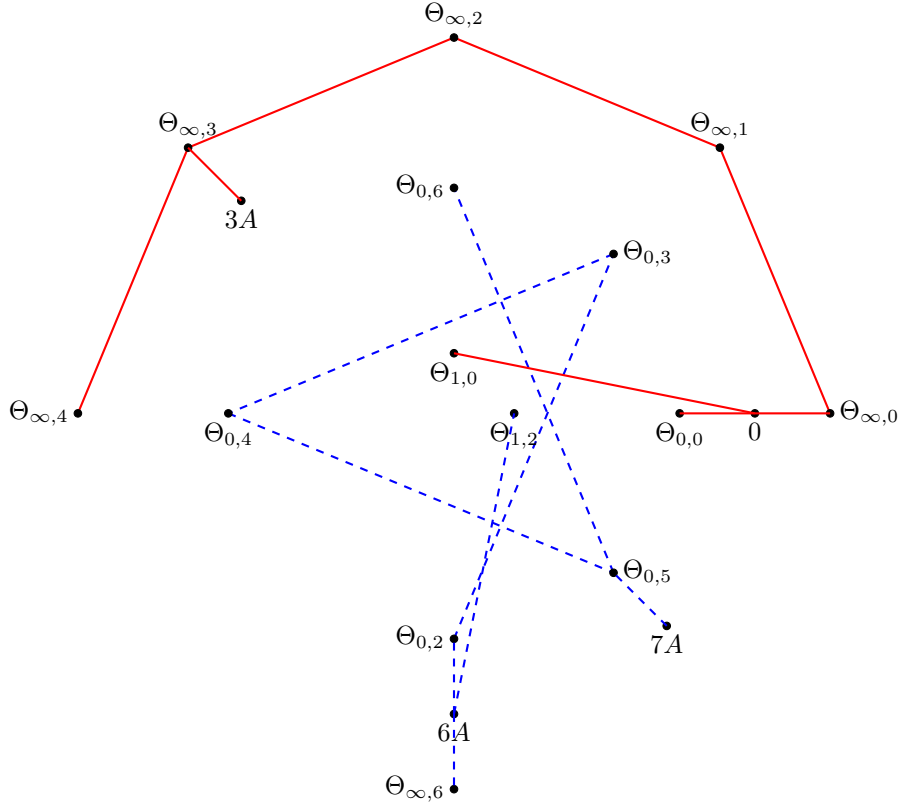
\captionof{figure}{Two singular fiber $I_4^*$}
\label{fig: H}

Let $E_{s}$ and $T_{s}$ be the generic fiber and the trivial lattice of the
fibration of parameter $s.$ For this fibration we write $D_{i}=\delta
_{i}+\Delta_{i}$ with $i=1,2$ where $\delta_{i}$ is an horizontal divisor and
$\Delta_{i}$ a vertical divisor. More precisely we have $\delta
_{1}=(3A)+2(0)$ and $\delta_{2}=(7A)+2(6A),$ and the classes of $\delta_{i}$
and $D_{i}$ are equal $\operatorname{mod}T_{s}.$ If $K=\mathbb{C}(s)$ recall
the isomorphism: $E_{s}(K)\sim NS(Y_{2})/T_{s}.$ So the class of $\delta
_{1}-\delta_{2}$ is $0$ in $NS(Y_{2})/T_{s};$ thus there is a function
$t_{0}=\frac{X^{2}\left(  Y+Z\right)  }{(X-1)(X+Y)}$ with divisor  $\delta
_{1}-\delta_{2}.$ We can choose $t$ as $t=t_{0}Z^{a}(Z-1)^{b}$. The rational integers $a$ and $b$
can be computed using the divisors of section $7.4.$ We find $a=0,b=1$ and
we can take $t=\frac{X^{2}\left(  Y+Z\right)  (Z-1)}{(X-1)(X+Y)}.$
We have also 
\[
t=-X{\frac {Y\left ({Z}^{2}-Z+YZ+1\right )Z}{\left (Z-1\right )\left (YZ
+1\right )}}-{\frac {{Z}^{2}\left (Y+1\right )}{\left (Z-1\right )
\left (YZ+1\right )}}.
\]
Eliminating $X$ in the equation of $Y_2$ and then doing the birational transformation
\[
Z=\frac{W}{WT+1},\,Y=\frac{-(WT^2+T+1)}{WT+1}
\]
of inverse 
\[
T=\frac{Y+1}{Z-1},\,W=\frac{-Z(Z-1)}{YZ+1}
\]
we obtain an equation of degree $2$ in $T$. After some classical transformation we get a quartic
 with a rational point corresponding to $(T=-1+t,W=0)$ and then using a standard transformation we obtain 
\begin{equation}
y^2=x^3+t(t^2+1+4t)x^2+t^4x.
\label{t}
\end{equation}
The singular fibers are
\[
\begin{array}
[c]{cccc}%
\text{at} & t=0 & \text{of type} & I_{4}^*\\
\text{at} & t=\infty & \text{of type} & I_{4}^*\\
\text{at} & t=-1  & \text{ of type } & I_{2}\\
\text{at} & t=t_0 & \text{of type} & I_{1}\\
\end{array}
\]
where $t_0$ is a root of the polynomial $Z^2+6Z+1.$

The Mordell-Weil group is of rank one and the point $(-t^3,2t^4)$ is of height $1$. The torsion group is of order $2.$
\subsubsection{Fibration of parameter $l$}
Let $l=\frac{Z(YZ+X)}{X(1+YZ)}.$ Eliminating $Y$ in the equation of $Y_2$ and using the variable $W=\frac{Z+X}{X-1}$, we have an equation of bidegree $2$ in $W$ and $Z$; easily we obtain

\begin{equation}
y^2-yx-2l^3y=(x+l^3)(x+l^2)(x-l+l^3).
\label{l}
\end{equation}

The singular fibers are
\[
\begin{array}
[c]{cccc}%
\text{at} & l=0 & \text{of type} & I_{10}\\
\text{at} & l=\infty & \text{of type} & I_{3}^*\\
\text{at} & l=l_0  & \text{ of type } & I_{1}\\
\end{array}
\] 
with $l_0$ root of the polynomial $16\,{x}^{5}-32\,{x}^{4}-24\,{x}^{3}-23\,{x}^{2}+12\,x-2.$
The Mordell-Weil group is of rank $2$, without torsion; the two points $(-l^3,0)$ and $(-l^2,0)$ are independent and the determinant of the matrix of heights is equal to $\frac{1}{5}$. 

\section{ A second set of fibrations: gluing and breaking}
\subsection{Classical examples}
In the next section we give fibrations obtained using Elkies idea given in
\cite{El} page 11 and explained in \cite{Kum} Appendix A. If we have two
fibrations with fiber $F$ and $F^{\prime}$ satisfying $F \cdot F^{\prime}=2$
the authors explain how we get a parameter from a Weierstrass equation
of one fibration. Decomposing $F'$ into vertical and horizontal component, $F'=F'_h+F'_v$ they use $F'_h$ to construct a function on the generic fiber.  

\subsubsection{Fibration of parameter $o$} 
Starting with a fibration with two singular fibers of type $II^*$ and the $(0)$ section we obtain a fibration with a singular fiber of type $I_{12}^*$. Starting from (\ref{h}) we take $x$ as new parameter. For simplicity, let $o=x+\frac{5}{3}$, we get
\begin{equation}
y^2=x^3+(o^3-5o^2+2)x^2+x.
\label{o}
\end{equation}

The singular fibers are
\[
\begin{array}
[c]{cccc}%
\text{at} & o=0 & \text{of type} & I_{2}\\
\text{at} & o=\infty & \text{of type} & I_{12}^*\\
\text{at} & o=1  & \text{ of type } & I_{1}\\
\text{at} & o=5  & \text{ of type } & I_{1}\\
\text{at} & o=o_0 & \text{of type} & I_{1}\\
\end{array}
\] 
where $o_0$ is a  root of $x^2-4x-4$.
The Mordell-Weil group is of rank $1$, the torsion group is of order $2.$ 

The point 
 $(1/16\,\left (o-4\right )^{2}\left (o-2\right )^{2},{\frac {1}{64}}\,
\left (o-4\right )\left (o-2\right )\left ({o}^{4}-4\,{o}^{3}-20\,{o}^
{2}+96\,o-80\right ))$ is of height $4.$ 

\subsubsection{Fibration of parameter $q$}
We start with a fibration with  singular fibers of type $II^*$ and $III^*$, join them with the zero-section and obtain a singular fiber of type $I_{10}^*$ of a new fibration.
We transform the equation (\ref{f}) to obtain
\[
y'^2=x'^3+(\frac{5}{3}-\frac{2}{f})x'+f+\frac{5}{3f}-\frac{70}{27}.
\]
We take $x'$ as the parameter of the new fibration; more precisely, for simplicity, let  $q=x'-\frac{1}{3}$. We obtain 
\begin{equation}
y^2=x^3+(q^3+q^2+2q-2)x^2+(1-2q)x.
\label{q}
\end{equation}
The singular fibers are
\[
\begin{array}
[c]{cccc}%
\text{at} & q=0 & \text{of type} & I_{4}\\
\text{at} & q=\infty & \text{of type} & I_{10}^*\\
\text{at} & q=\frac{1}{2}   & \text{ of type } & I_{2}\\
\text{at} & q=q_0   & \text{ of type } & I_{1}\\
\end{array}
\]
where $q_0$ is a root of $X^2+2X+5$.
The Mordell-Weil group is of order $2.$

\subsection{Fibration with singular fiber of type $I_n$ with $n$ great}
We start with a fibration with two fibers  $I_n^*$ and $I_m^*$ and a two-torsion section. Gluing them, we can construct a fibration with a singular fiber of type $I_{n+m+8}$. The parameter will be $\frac{y}{x}$ in a good model of the first fibration. We can also start from a fibration with two singular fibers of type $I_n^*$ and $I_2$ and a two-torsion section, join them with the zero-section and obtain a new fibration with a singular fiber of type $I_{n+6}$ or $I_{n+7}$.    
\subsubsection{Fibration of parameter $m$}
We start from the fibration with parameter $t$. With the two-torsion section and the two singular fibers of type $I_4^*$, we  can form a singular fiber of of type $I_{16}$ of a new fibration (Figure \ref{fig: M}). 
\bigskip
\begin{center}
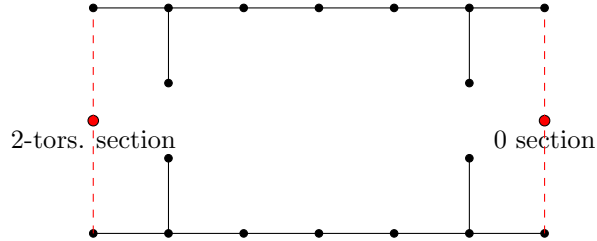

\begin{tikzpicture}[scale=1]%
\draw (-3,-2)--(3,-2);
\draw (-2,-1)--(-2,-2);
\draw(2,-1)--(2,-2);
\draw [fill=black](-3,-2) circle (0.05cm);
\draw [fill=black](-2,-2) circle (0.05cm);
\draw [fill=black](-1,-2) circle (0.05cm);
\draw [fill=black](0,-2) circle (0.05cm);
\draw [fill=black](1,-2) circle (0.05cm);
\draw [fill=black](2,-2) circle (0.05cm);
\draw [fill=black](3,-2) circle (0.05cm);
\draw [fill=black](-2,-1) circle (0.05cm);
\draw [fill=black](2,-1) circle (0.05cm);
\draw [fill=red](-3,-0.5) circle (0.07cm)node [below]{2-tors. section};
\draw [fill=red](3,-0.5) circle (0.07cm)node [below]{0 section};

\draw [dashed,red] (-3,-2)--(-3,1);
\draw [dashed,red](3,-2)--(3,1);

\draw(-3,1)--(3,1);
\draw (-2,0)--(-2,1);
\draw(2,0)--(2,1);
\draw [fill=black](-3,1) circle (0.05cm);
\draw [fill=black](-2,1) circle (0.05cm);
\draw [fill=black](-1,1) circle (0.05cm);
\draw [fill=black](0,1) circle (0.05cm);
\draw [fill=black](1,1) circle (0.05cm);
\draw [fill=black](2,1) circle (0.05cm);
\draw [fill=black](3,1) circle (0.05cm);
\draw [fill=black](-2,0) circle (0.05cm);
\draw [fill=black](2,0) circle (0.05cm);
\end{tikzpicture}%
\end{center}
\captionof{figure}{$I_4^*,I_4^* \rightarrow I_{16}$}
\label{fig: M}


From the equation (\ref{t}) we get 

\[
y'^2=x'^3+(t+\frac{1}{t}+4)x'^2+x'.
\]   
Let $m=\frac{y'}{x'}$, we obtain 

\begin{equation}
y^2+(m-2)(m+2)yx=x(x-1)^2.
\label{m}
\end{equation}
 
The singular fibers are
\[
\begin{array}
[c]{cccc}%
\text{at} & m=0 & \text{of type} & I_{2}\\
\text{at} & m=\infty & \text{of type} & I_{16}\\
\text{at} & m=\pm 2  & \text{ of type } & I_{2}\\
\text{at} & m=\pm 2\sqrt 2  & \text{ of type } & I_{1}\\
\end{array}
\] 
The Mordell-Weil group is cyclic of order $4$ generated by $(x=1,y=0)$.
\subsubsection{Fibration of parameter $n$}
With a similar method, we can start from a fibration with two singular fibers of type $I_2^*$ and $I_6^*$,  a two-torsion section and join them to have a fiber of type $I_{16}$.

\bigskip
\begin{center}
\begin{tikzpicture}[scale=1]%
\draw (-4,-2)--(4,-2);
\draw (-3,-1)--(-3,-2);
\draw(3,-1)--(3,-2);
\draw [fill=black](-4,-2) circle (0.05cm);
\draw [fill=black](4,-2) circle (0.05cm);

\draw [fill=black](-3,-2) circle (0.05cm);
\draw [fill=black](-2,-2) circle (0.05cm);
\draw [fill=black](-1,-2) circle (0.05cm);
\draw [fill=black](0,-2) circle (0.05cm);
\draw [fill=black](1,-2) circle (0.05cm);
\draw [fill=black](2,-2) circle (0.05cm);
\draw [fill=black](3,-2) circle (0.05cm);
\draw [fill=black](-3,-1) circle (0.05cm);
\draw [fill=black](3,-1) circle (0.05cm);
\draw [fill=red](-3.0,-0.5) circle (0.07cm)node [below]{2-tors. section};
\draw [fill=red](3.0,-0.5) circle (0.07cm)node [below]{0 section};

\draw [dashed,red] (-4,-2)--(-2,1);
\draw [dashed,red](4,-2)--(2,1);

\draw(-2,1)--(2,1);
\draw (-1,0)--(-1,1);
\draw(1,0)--(1,1);
\draw [fill=black](-2,1) circle (0.05cm);
\draw [fill=black](-1,1) circle (0.05cm);
\draw [fill=black](0,1) circle (0.05cm);
\draw [fill=black](1,1) circle (0.05cm);
\draw [fill=black](2,1) circle (0.05cm);
\draw [fill=black](-1,0) circle (0.05cm);
\draw [fill=black](1,0) circle (0.05cm);
\end{tikzpicture}%
\end{center}

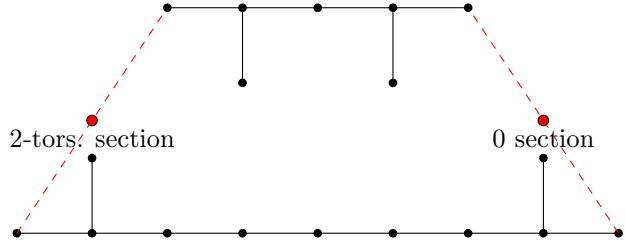
\captionof{figure}{$I_6^*,I_2^* \rightarrow I_{16}$}
\label{fig: U}

From the equation (\ref{r}) we obtain
\[
y'^2=x'^3+(r-1+\frac{2}{r})x'^2-\frac{x'}{r}.
\]
Let $n=\frac{y'}{x'}$. The Weierstrass  equation is 
\begin{equation}
y^2+(n^2-1)yx-y=x^3-2x^2.
\label{n}
\end{equation}
The singular fibers are
\[
\begin{array}
[c]{cccc}%
\text{at} & n=0 & \text{of type} & I_{2}\\
\text{at} & n=\infty & \text{of type} & I_{16}\\
\text{at} & n=n_0   & \text{ of type } & I_{1}\\
\end{array}
\]
where $n_0$ is a root of the polynomial $2x^6-9x^4-17x^2+125$.
The Mordell-Weil group is of rank $2$. The determinant of the matrix of heights of the two points $(1 \pm t,0)$ is  $\frac{3}{8}$.

\subsubsection{Fibration of parameter $j$}
Instead of the two-torsion section we can use the section of infinite order $(-t,-2t)$ in the fibration of parameter $t$. 
\bigskip
\begin{center}

\begin{tikzpicture}[scale=1]%
\draw [dotted,red](-2,-2)--(-2,2);
\draw  [dotted,red](-2,2)--(2,2);
\draw [dotted,red](2,2)--(2,-2);
\draw [dotted,red](-2,-2)--(2,-2);

\draw [ thick](-2,-2)-- (-2,1);%
\draw [thick](-1,2)--(2,2) ;%
\draw  [thick](2,2)--(2,-1);
\draw [thick](-2,-2)--(1,-2);

\draw [thick](0,2)--(0,1);
\draw [thick](0,-2)--(0,-1);
\draw [thick](2,0)--(1,0);
\draw [thick](-2,0)--(-1,0);

\draw [fill=black](-2,-2) circle (0.05cm);

\draw [fill=black](-1,-2) circle (0.05cm);

\draw [fill=black](0,-2) circle (0.05cm);

\draw [fill=black](1,-2) circle (0.05cm);

\draw [fill=red](2,-2) circle (0.07cm) ;

\draw [fill=black](-2,-1) circle (0.05cm);

\draw [fill=black](-2,0) circle (0.05cm);

\draw [fill=black](-2,1) circle (0.05cm);

\draw [fill=black](2,-2) circle (0.05cm);

\draw [fill=black](2,-1) circle (0.05cm);

\draw [fill=black](2,0) circle (0.05cm);

\draw [fill=black](2,1) circle (0.05cm);

\draw [fill=black](2,2) circle (0.05cm);

\draw [fill=black](-2,2) circle (0.07cm);

\draw [fill=black](-1,2) circle (0.05cm);

\draw [fill=black](0,2) circle (0.05cm);

\draw [fill=black](1,2) circle (0.05cm);

\draw [fill=black](2,2) circle (0.05cm);

\draw [fill=black](0,1) circle (0.05cm);

\draw [fill=black](0,-1) circle (0.05cm);

\draw [fill=black](1,0) circle (0.05cm);

\draw [fill=black](-1,0) circle (0.05cm);
\draw [fill=black](0,0) circle (0.07cm);
\draw [dotted,red](-1,0)--(1,0);

\draw [dashed] (-2.5,-2.5)--(-2.5,0.5);
\draw [dashed] (-2.5,-2.5)--(2.5,-2.5);
\draw [dashed] (2.5,-2.5)--(2.5,0.5);
\draw [dashed] (-2.5,0.5)--(2.5,0.5);

\end{tikzpicture}
\end{center}

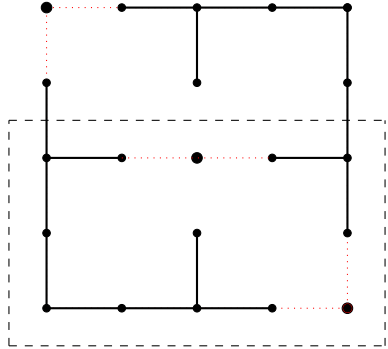
\captionof{figure}{$2I_4^* \rightarrow I_{12}$}
\label{fig: P}

In (\ref{t}), let $U'=x+t,V'=y+2t$ and take the parameter $\frac{V'}{U'}$. For simplification let $j=\frac{V'}{U'}-2=\frac{y-2x}{x+t}$; the new fibration obtained has a $3$-torsion point which can be put in $(0,0)$. So it follows

\begin{equation}
{y}^{2}-(j^2+4j)x\,y+{j}^{2}y=x^3. 
\label{j} 
\end{equation}

The singular fibers are
\[
\begin{array}
[c]{cccc}%
\text{at} & j=0 & \text{of type} & IV^*\\
\text{at} & j=\infty & \text{of type} & I_{12}\\
\text{at} & j=-1 & \text{of type} & I_{2}\\
\text{at} & j=j_0 & \text{of type} & I_{1}\\
\end{array}
\]
where $j_0$ is a root of the polynomial $(x^2+10x+27)$.
The Mordell-Weil group is isomorphic to $\mathbb{Z}/3\mathbb{Z}.$

\subsubsection{Fibration of parameter $c$}
We start from the fibration of parameter $o$ with the equation (\ref{o}). For $o=0$ we have a singular fiber of type $I_2$, the singular point of the bad reduction is $(x=1,y=0)$ so we put $x=1+u$ and  obtain the equation
\[
y^2=u^3+(-1-5o^2+o^3)u^2+2o^2(o-5)u-o^2(o-5).
\]  
The two-torsion section cut the singular fiber $I_2$ on the zero component, as shown on Figure \ref{fig: Q}.
\bigskip
\begin{center}
\begin{tikzpicture}[scale=0.8]%
\draw [thick] (-7,0)--(7,0);
\draw [thick] (-6,0)--(-6,1);
\draw [thick]  (6,0)--(6,1);
\draw [fill=black](-7,0) circle (0.05cm);
\draw [fill=black](-6,0) circle (0.05cm);
\draw [fill=black](-5,0) circle (0.05cm);
\draw [fill=black](-4,0) circle (0.05cm);
\draw [fill=black](-3,0) circle (0.05cm);
\draw [fill=black](-2,0) circle (0.05cm);
\draw [fill=black](-1,0) circle (0.05cm);
\draw [fill=black](0,0) circle (0.05cm);
\draw [fill=black](1,0) circle (0.05cm);
\draw [fill=black](2,0) circle (0.05cm);
\draw [fill=black](3,0) circle (0.05cm);
\draw [fill=black](4,0) circle (0.05cm);
\draw [fill=black](5,0) circle (0.05cm);
\draw [fill=black](6,0) circle (0.05cm);
\draw [fill=black](7,0) circle (0.05cm);
\draw [fill=black](6,1) circle (0.05cm);
\draw [fill=black](-6,1) circle (0.05cm);

\draw [fill=black](0,3) circle (0.07cm)node[right]{0-section};
\draw [fill=black](0,4) circle (0.05cm);
\draw [fill=black](-2,4) circle (0.05cm);
\draw [ thick] (-2,4)--(0,4);
\draw[thick,dotted,red] (0,3)--(0,4);
\draw[thick,dotted,red] (6,1)--(0,3);
\draw [fill=black](-2,3) circle (0.07cm)node[left]{2-tors. section};
\draw[thick,dotted,red] (-2,3)--(0,4);
\draw [thick,dotted,red](-2,3)--(-6,1);


\end{tikzpicture}%
\end{center}

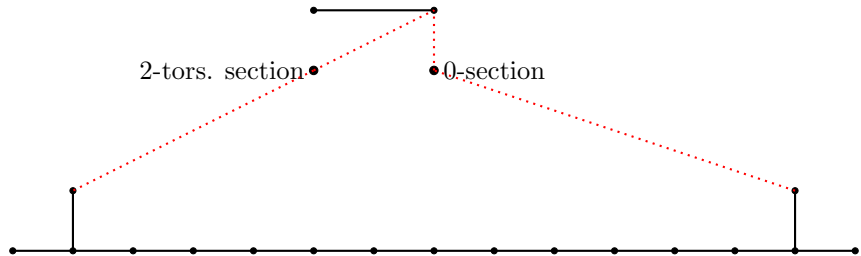
\captionof{figure}{$I_{12}^*,I_2 \rightarrow I_{18}$}
\label{fig: Q}

Let $c=\frac{y}{ox}$ with $y,x$ from the equation (\ref{o}), we have easily the Weierstrass equation of the fibration. After some translation we can suppose the  three-torsion point is $(0,0)$; we get 

\begin{equation}
y^2+(c^2+5)yx+y=x^3.
\label{c}
\end{equation}

The singular fibers are
\[
\begin{array}
[c]{cccc}%
\text{at} & c=\infty & \text{of type} & I_{18}\\
\text{at} & c=c_0 & \text{of type} & I_{1}\\
\end{array}
\]
where $c_0$ is a root of the polynomial $(x^2+2)(x^2+x+7)(x^2-x+7).$ 
The rank of the Mordell-Weil group is one. The height of $(x=\frac{-1}{4}(c^4+c^2+1),y=\frac{1}{8}(c^2-c+1)^3)$ is equal to $4.$ 

\subsection{Fibrations with singular fiber of type $I_n^*$}

In this paragraph we obtain new fibrations by gluing two fibers of type $I_{p}^{*}$ and $I_{q}^{*}$ to obtain, with the zero section, a singular fiber of type $I_{p+q+4}^{*}$ or $I_{p+4}^{*}.$ 
\subsubsection{Fibration of parameter $u$}
We start from the fibration with parameter $t$. With  the two singular fibers of type $I_4^*$, and the $0$-section, we  can form a singular fiber of  type $I_{8}^*$ of a new fibration (Figure \ref{fig: R}). 
\bigskip
\begin{center}

\begin{tikzpicture}[scale=1]%
\draw [dotted,red](-2,-2)--(-2,2);
\draw  [dotted,red](-2,2)--(2,2);
\draw [dotted,red](2,2)--(2,-2);
\draw [dotted,red](-2,-2)--(2,-2);

\draw [ thick](-2,-2)-- (-2,1);%
\draw [thick](-1,2)--(2,2) ;%
\draw  [thick](2,2)--(2,-1);
\draw [thick](-2,-2)--(1,-2);

\draw [thick](0,2)--(0,1);
\draw [thick](0,-2)--(0,-1);
\draw [thick](2,0)--(1,0);
\draw [thick](-2,0)--(-1,0);

\draw [fill=black](-2,-2) circle (0.05cm);

\draw [fill=black](-1,-2) circle (0.05cm);

\draw [fill=black](0,-2) circle (0.05cm);

\draw [fill=black](1,-2) circle (0.05cm);

\draw [fill=red](2,-2) circle (0.07cm) ;

\draw [fill=black](-2,-1) circle (0.05cm);

\draw [fill=black](-2,0) circle (0.05cm);

\draw [fill=black](-2,1) circle (0.05cm);


\draw [fill=black](2,-2) circle (0.05cm);

\draw [fill=black](2,-1) circle (0.05cm);

\draw [fill=black](2,0) circle (0.05cm);

\draw [fill=black](2,1) circle (0.05cm);

\draw [fill=black](2,2) circle (0.05cm);

\draw [fill=black](-2,2) circle (0.07cm);

\draw [fill=black](-1,2) circle (0.05cm);

\draw [fill=black](0,2) circle (0.05cm);

\draw [fill=black](1,2) circle (0.05cm);

\draw [fill=black](2,2) circle (0.05cm);

\draw [fill=black](0,1) circle (0.05cm);

\draw [fill=black](0,-1) circle (0.05cm);

\draw [fill=black](1,0) circle (0.05cm);

\draw [fill=black](-1,0) circle (0.05cm);

\draw [dashed] (-2.5,-2.5)--(-2.5,1.5);
\draw [dashed] (-2.5,-2.5)--(2.5,-2.5);
\draw [dashed] (2.5,-2.5)--(2.5,1.5);
\draw [dashed] (-2.5,1.5)--(-0.5,1.5);
\draw [dashed] (-0.5,1.5)--(-0.5,-1.5);
\draw [dashed] (-0.5,-1.5)--(0.5,-1.5);
\draw [dashed] (0.5,-1.5)--(0.5,1.5);
\draw [dashed] (0.5,1.5)--(2.5,1.5);

\end{tikzpicture}
\end{center}

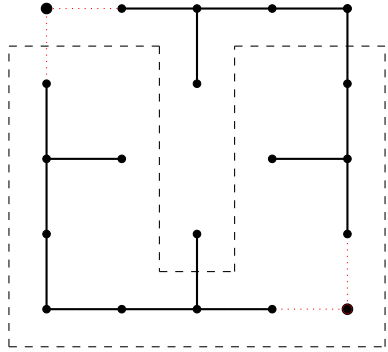
\captionof{figure}{$2I_4^* \rightarrow I_{8}^*$}
\label{fig: R}

From the equation (\ref{t}), it follows 
\[
\frac{y^2}{x^2t^2}=\frac{x}{t^2}+t+\frac{t^2}{x}+\frac{1}{t}+4
\]
Taking $u=\frac{x}{t^2}+t$ as the new parameter, we obtain

\begin{equation}
y'^2=x'^3+u(u^2+4u+2)x'^2+u^2x'.
\label{u}
\end{equation}
The singular fibers are
\[
\begin{array}
[c]{cccc}%
\text{at} & u=0 & \text{of type} & I_{1}^*\\
\text{at} & u=\infty & \text{of type} & I_{8}^*\\
\text{at} & u=-2   & \text{ of type } & I_{2}\\
\text{at} & u=-4   & \text{ of type } & I_{1}\\
\end{array}
\] 
The Mordell-Weil group is of order $2$.
\subsubsection{Fibration of parameter $i$}
We start from the fibration of parameter $s$ and from equation (\ref{s}).
With  the two singular fibers of type $I_8^*$ and $I_1^*$, and the $0$ section, we  can form a singular fiber of  type $I_{13}^*$ of a new fibration.
 We seek for a parameter of the form $\frac{x}{s^2}+\frac{k}{s}$, with $k$ chosen to have a quartic equation. We see that  $k=1$ is a good choice so  the new parameter is $i=\frac{x}{s^2}+\frac{k}{s}$ and a Weierstrass equation is

\begin{equation}
y^2={x}^{3}+\left ({i}^{3}+4\,{i}^{2}+2\,i\right ){x}^{2}+\left (-2\,{i}^{
2}-8\,i-2\right )x+i+4.
\label{i}
\end{equation}
The singular fibers are
\[
\begin{array}
[c]{cccc}%
\text{at} & i=\infty & \text{of type} & I_{13}^*\\
\text{at} & i=-\frac{5}{2}   & \text{ of type } & I_{2}\\
\text{at} & i=i_0   & \text{ of type } & I_{1}\\
\end{array}
\] 
where $i_0$ is a root of the polynomial $4x^3+11x^2-8x+16$.
The Mordell-Weil group is $0.$

\subsection{Breaking}
In this paragraph we  give a fibration obtained by breaking a singular fiber $I_{18}$ and using the three-torsion points as for the fibration of parameter $t.$  
\subsubsection{Fibration of parameter $\psi$}


\bigskip

\begin{center}
\begin{tikzpicture}[scale=1.1]%

\draw [thick](-3,0)--(3,0);
\draw [thick] (-3,0)--(0,2);
\draw [thick] (0,2)--(3,0);

\draw [fill=black](-3,0) circle (0.05cm);
\draw [fill=black](-2,0) circle (0.05cm);
\draw [fill=black](-1,0) circle (0.05cm);
\draw [fill=black](0,0) circle (0.05cm);
\draw [fill=black](1,0) circle (0.05cm);
\draw [fill=black](2,0) circle (0.05cm);
\draw [fill=black](3,0) circle (0.05cm);
\draw [fill=black](-3,0) circle (0.05cm);
\draw [fill=black](-2.5,0.333) circle (0.05cm);
\draw [fill=black](-2,0.666) circle (0.05cm);
\draw [fill=black](-1.5,1) circle (0.05cm);
\draw [fill=black](-1,1.332) circle (0.05cm);
\draw [fill=black](-0.5,1.65) circle (0.05cm);
\draw [fill=black](0,2) circle (0.05cm);

\draw [fill=black](3,0) circle (0.05cm);
\draw [fill=black](2.5,0.333) circle (0.05cm);
\draw [fill=black](2,0.666) circle (0.05cm);
\draw [fill=black](1.5,1) circle (0.05cm);
\draw [fill=black](1,1.332) circle (0.05cm);
\draw [fill=black](0.5,1.65) circle (0.05cm);

\draw [fill=black](-4,0) circle (0.07cm)node[left]{$(1)$};
\draw [fill=black](4,0) circle (0.07cm)node [right]{$(1)$};
\draw [fill=black](0,3) circle (0.07cm)node [right]{0-section}node[left]{$(-2)$};
\draw[dashed,red](-4,0)--(-3,0);
\draw[dashed,red](3,0)--(4,0);
\draw[dashed,red](0,2)--(0,3);

\draw[dotted] (-2.5,0.83)--(2.5,0.83);
\draw[dotted] (-2.5,0.83)--(-2.5,3.5);
\draw[dotted] (2.5,0.83)--(2.5,3.5);
\draw[dotted] (-2.5,3.4)--(2.5,3.4);
\draw[dotted] (-4.5,-0.5)--(4.5,-0.5);
\draw[dotted] (-4.5,-0.5)--(-4.5,0.5);
\draw[dotted] (4.5,-0.5)--(4.5,0.5);
\draw[dotted] (-4.5,0.5)--(4.5,0.5);
\draw (0,3.5) circle(0.0007cm);
\end{tikzpicture}
\end{center}

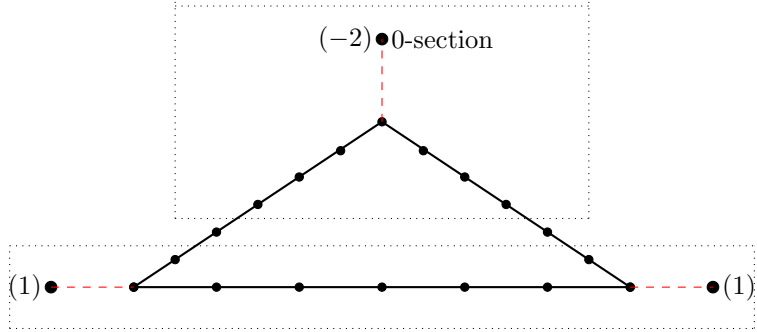
\captionof{figure}{A singular fiber $I_{18}\rightarrow III^*,I_{6}^*$}
\label{fig: K}

We start with the fibration (\ref{c}) of parameter $c$ and $3$-torsion sections . We represent the graph of the singular fiber $I_{18}$, the zero section  and two $3$-torsion sections (Figure \ref{fig: K}). On this graph we can draw two singular fibers $III^*$ and $I_{6}^*$.  The function $x$ of  (\ref{c}) has the horizontal divisor $-2(0)+P+(-P)$ if $P$ denotes the $3$-torsion point and we can take it as the parameter $\psi$ of the new fibration. We get the equation 

\begin{equation}
y^2={x}^{3}-5\,{x}^{2}{\psi}^{2}-\psi\,{x}^{2}-{\psi}^{5}x.
\label{psi}
\end{equation}
The singular fibers are
\[
\begin{array}
[c]{cccc}%
\text{at} & \psi=\infty & \text{of type} & I_{6}^*\\
\text{at} & \psi=-\frac{5}{2}   & \text{ of type } & III^*\\
\text{at} & \psi=-\frac{1}{4}   & \text{ of type } & I_{1}\\
\text{at} & \psi=\psi_0   & \text{ of type } & I_{1}\\
\end{array}
\] 
with $\psi_0$   root of the polynomial $x^2+6x+1$.
The Mordell-Weil group is of rank $1$. The height of the point 
$$(1/4\,\left ({\psi}^{2}+3\,\psi+1\right )^{2},-1/8\,\left ({\psi}^{2}+
3\,\psi+1\right )\left ({\psi}^{4}+6\,{\psi}^{3}+{\psi}^{2}-4\,\psi-1
\right ))$$ is $4.$ The torsion-group is of order $2.$

\section{Last set}
From the first set of fibrations we see that not all the components of singular fibers defined on $\mathbb {Q}$ appear 
on the graph of the Figure \ref{fig: G}. We have to introduce some of them to construct easily the last fibrations.
For example, we start with the fibration of parameter $p$ (\ref{p}). Using the Figure \ref{fig: G} we see only $3$ components of the singular fiber $I_4$ for $p=-1$ i.e. $\Theta_{0,1},\Theta_{0,2},\Theta_{0,3}$. The fourth is the rational curve named  $\Theta_{p,-1,3}$ parametrized by
\[
X=4\,{\frac {\left (w+1\right )w}{1+3\,{w}^{2}}},\, Y=1/4\,{\frac {1+3\,{w}^{2}}{\left (-1+w\right )w}},\, Z=-2\,{\frac {\left (-1+w\right )\left (w+1\right )}{1+3\,{w}^{2}}}. 
\]
\bigskip
\begin{center}
\begin{tikzpicture}[scale=1.5]
\draw[thick](-3,-2)--(3,-2);
\draw[thick](-2,-2)--(-2,-1);
\draw[thick](2,-2)--(2,-1);

\draw [fill=black](-3,-2) circle (0.05cm)node[below]{$4A$};
\draw [fill=black](-2,-2) circle (0.05cm)node[below]{$\Theta_{\infty,4}$};
\draw [fill=black](-1,-2) circle (0.05cm)node[below]{$\Theta_{\infty,5}$};
\draw [fill=black](0,-2) circle (0.05cm)node[below]{$\Theta_{\infty,6}$};
\draw (-0.1,-1.8)node [right] {$p=\infty$};
\draw [fill=black](1,-2) circle (0.05cm)node[below]{$\Theta_{\infty,7}$};
\draw [fill=black](2,-2) circle (0.05cm)node[below]{$\Theta_{\infty,0}$};
\draw [fill=black](3,-2) circle (0.05cm)node[below]{$\Theta_{\infty,1}$};
\draw [fill=black](-2,-1) circle (0.05cm)node[right]{$\Theta_{\infty,3}$};
\draw [fill=black](2,-1) circle (0.05cm)node[right]{$0$};

\draw[thick](-2,4)--(2,4);
\draw[thick](-1,4)--(-1,3);
\draw[thick](1,4)--(1,3);

\draw [fill=black](-2,4) circle (0.05cm)node[above]{$\Theta_{0,5}$};
\draw [fill=black](-1,4) circle (0.05cm)node[above]{$\Theta_{0,6}$};
\draw [fill=black](0,4) circle (0.05cm)node[above]{$2A$};
\draw (-0.1,3.8)node [right]{$p=0$}; 
\draw [fill=black](1,4) circle (0.05cm)node[above]{$\Theta_{1,2}$};
\draw [fill=black](2,4) circle (0.05cm)node[above]{$\Theta_{1,1}$};
\draw [fill=black](-1,3) circle (0.05cm)node[above]{$\Theta_{0,7}$};
\draw [fill=black](1,3) circle (0.05cm)node[above]{$\Theta_{1,3}$};

\draw [fill=black](-2.65,0) circle (0.07cm)node[above]{$\Theta_{0,4}$};
\draw [fill=black](2.65,0) circle (0.07cm)node[right]{$A$};
\draw [fill=black](-1.24,0) circle (0.07cm)node[above]{$3A$};
\draw [fill=black](1.24,0) circle (0.07cm)node[above]{$\Theta_{0,0}$};

\draw [thin,dashed,red](-3,-2)--(-2,4);
\draw [thin,dashed,red](3,-2)--(2,4);
\draw [thin,dashed,red](-2,-1)--(1,3);
\draw [thin,dashed,red](2,-1)--(-1,3);

\draw [thick] (-1.5,1.4)--(1.5,1.4);
\draw [thick] (-1.5,1.4)--(0,2.4);
\draw [thick] (0,2.4)--(1.5,1.4);
\draw [fill=black](-1.5,1.4) circle (0.05cm)node[left]{$\Theta_{0,1}$};
\draw [fill=black](0,1.4) circle (0.05cm)node[above]{$\Theta_{0,2}$};
\draw [fill=black](1.5,1.4) circle (0.05cm)node[right]{$\Theta_{0,3}\,\,p=-1$};
\draw [fill=black](0,2.4) circle (0.05cm)node[above]{$\Theta_{p,-1,3}$};

\draw [thin,dashed,red](-1.5,1.4)--(-1.24,0);
\draw [thin,dashed,red](-1.5,1.4)--(1.24,0);
\draw [thin,dashed,red] (1.5,1.4)--(-2.65,0);
\draw [thin,dashed,red] (1.5,1.4)--(2.65,0);
\end{tikzpicture}
\end{center}
\captionof{figure}{Fibration of parameter $p$: three singular fibers}
\label{fig: K1}%

The four components  $\Theta_{0,0},\Theta_{0,4},A,3A$ are sections of the fibration of parameter $p$ which cut the singular fibers following the previous figure.

\subsection{From fibration of parameter $p$}
\subsubsection{Fibration of parameter $\delta$}

On the  Figure \ref{fig: K1} 
 we can see the divisor $\Delta$,  
\[
\Delta =6\Theta_{\infty,0}+5\Theta_{\infty,7}+4\Theta_{\infty,6}+3\Theta_{\infty,5}+2\Theta_{\infty,4}+\Theta_{\infty,3}
+3\Theta_{\infty,1}+40+2\Theta_{0,0}.
\]

The divisor $\Delta$  corresponds to a fiber of type $II^*.$
Using the equation (\ref{p}) and the previous remark, we can calculate the divisors of $p,p+1$ and $Up(p+1)$. The poles  of  $\delta:=Up(p+1)$ give the  divisor $\Delta$. Note $\delta$ is equal to the $x$ of equation (\ref{p}). From the zeros of $\delta$ we get a fiber of type $I_5^*$
\[
2A+2\Theta_{1,2}+\Theta_{1,3}+\Theta_{1,1}+2\sum_3^6\Theta_{0,i}+\Theta_{0,2}+\Theta_{p,-1,3}.
\]
After an easy calculation we get a Weierstrass equation of the fibration

\begin{equation}
y^2={x}^{3}+\delta\,\left (1+4\,\delta\right ){x}^{2}+2\,{\delta}
^{4}x+{\delta}^{7}.
\label{delta}
\end{equation}

The singular fibers are
\[
\begin{array}
[c]{cccc}%
\text{at} & \delta=0 & \text{of type} & I_{5}^*\\
\text{at} & \delta=\infty   & \text{ of type } & II^*\\
\text{at} & \delta=-2   & \text{ of type } & I_{2}\\
\text{at} & \delta=-4/27   & \text{ of type } & I_{1}\\
\end{array}
\]

The Mordell-Weil group is equal to $0.$

\subsubsection{Fibration of parameter $\pi$}
On the previous figure (Figure \ref{fig: K1}) 
we can see the singular fiber 
\[
\Theta_{\infty,1}+\Theta_{\infty,7}+2\Theta_{\infty,0}+2(0)+2\Theta_{0,0}+2\Theta_{0,7}+2\Theta_{0,6}+(2A)+
2\Theta_{1,2}+\Theta_{1,1}+\Theta_{1,3}.
\]
Using the previous calculation for $\delta$ we see that it corresponds to a fibration of parameter $\pi=\frac{U(p+1)}{p}$. The zeros of $\pi$ correspond to a fiber of type $I_3^*$.
After an easy calculation we have a Weierstrass equation of the fibration
\begin{equation}
y^2=x^3+\pi(\pi^2-2\pi-2)x^2+\pi^2(2\pi+1)x.
\label{pi}
\end{equation}
The singular fibers are
\[
\begin{array}
[c]{cccc}%
\text{at} & \pi=0 & \text{of type} & I_{3}^*\\
\text{at} & \pi=\infty   & \text{ of type } & I_{6}^*\\
\text{at} & \pi=-\frac{1}{2}   & \text{ of type } & I_{2}\\
\text{at} & \pi=4   & \text{ of type } & I_{1}\\
\end{array}
\]

The Mordell-Weil group is isomorphic to $\mathbb Z/2\mathbb Z.$

\subsubsection{Fibration of parameter $\mu$}
From the fibration of parameter $p$ we can also join the $I_4^*$ and $I_2^*$ fibers.
Let $\mu=\frac{y}{p(x-p(p+1)^2)}$, with $y,x$ from equation (\ref{p}). After an easy calculation we obtain a Weierstrass equation of the fibration of parameter $\mu$
\begin{equation}
y^2+\mu^2(x-1)y=x(x-\mu^2)^2.
\label{mu}
\end{equation}
The singular fibers are
\[
\begin{array}
[c]{cccc}%
\text{at} & \mu=0 & \text{of type} & IV^*\\
\text{at} & \mu=\infty   & \text{ of type } & I_{10}\\
\text{at} & \mu=\pm 1   & \text{ of type } & I_{2}\\
\text{at} & \mu=\mu_0   & \text{ of type } & I_{1}\\
\end{array}
\]
where $\mu_0$ is a root of the polynomial $2x^2-27$.
The rank of the Mordell-Weil group is $1$, the torsion group is $0.$  The height of point $(\mu^2,0)$ is equal to $\frac{1}{15}$.  
\begin{remark} This fibration can also be obtained with the method of the first set and the parameter ${\frac {{X}^{2}\left (Y-1\right )\left (Z-1\right )\left (YZ+X\right )
}{\left (X-1\right )\left (X+Z\right )\left (X+Y\right )}}$ or with parameter $\frac{(Us-1)s}{s-1}.$
\end{remark}

\subsubsection{Fibration of parameter $\alpha$}
Let $\alpha=\frac{y}{p(x-p)}$.  After an easy calculation we have a Weierstrass equation of the fibration of parameter $\alpha$
 \begin{equation}
y^2+(\alpha^2+2)yx-\alpha^2y=x^2(x-1). 
 \end{equation}
 
 The singular fibers are
\[
\begin{array}
[c]{cccc}%
\text{at} & \alpha=0 & \text{of type} & I_0^*\\
\text{at} & \alpha=\infty   & \text{ of type } & I_{14}\\
\text{at} & \alpha=\alpha_0    & \text{ of type } & I_{1}\\
\end{array}
\]
where $\alpha_0$ is a root of the polynomial $2x^4+13x^2+64$.
The Mordell-Weil group is of rank one, the torsion group is $0.$ The height of $(0,0)$ is $\frac{1}{7}.$ 

\subsection{From fibration of parameter $\delta$}
We redraw the graph of the components of the singular fibers and sections of the fibration of parameter $\delta$, and lookfort subgraphs.

\begin{center}
\begin{tikzpicture}[scale=0.9]%
\draw[fill=black](0.1,1.1)circle (0.05cm);
\draw[fill=black](1.1,1.1)circle (0.05cm);
\draw[fill=black](2.1,1.1)circle (0.05cm);
\draw[fill=black](3.1,1.1)circle (0.05cm)node [below]{$\delta=0$};
\draw[fill=black](4.1,1.1)circle (0.05cm);
\draw[fill=black](5.1,1.1)circle (0.05cm);
\draw[fill=black](6.1,1.1)circle (0.05cm);
\draw[fill=black](7.1,1.1)circle (0.05cm);
\draw[fill=black](1.1,2.1)circle (0.05cm);
\draw[fill=black](6.1,2.1)circle (0.05cm);
\draw(0.1,1.1)--(7.1,1.1);
\draw(1.1,1.1)--(1.1,2.1);
\draw(6.1,1.1)--(6.1,2.1);

\draw(0.1,4.1)--(7.1,4.1);
\draw(2.1,4.1)--(2.1,5.1);
\draw[fill=black](0.1,4.1)circle (0.05cm);
\draw[fill=black](1.1,4.1)circle (0.05cm);
\draw[fill=black](2.1,4.1)circle (0.05cm);
\draw[fill=black](3.1,4.1)circle (0.05cm)node [above]{$\delta=\infty$};
\draw[fill=black](4.1,4.1)circle (0.05cm);
\draw[fill=black](5.1,4.1)circle (0.05cm);
\draw[fill=black](6.1,4.1)circle (0.05cm);
\draw[fill=black](7.1,4.1)circle (0.05cm);
\draw[fill=black](2.1,5.1)circle (0.05cm);

\draw[dashed,red](7.1,1.1)--(7.1,4.1);
\draw[fill=black](7.1,2.6)circle (0.09cm)node [left]{$0$};
\draw[dashed,red](7.1,2.6)--(9.1,2.6);
\draw[dashed,red](7.1,2.6)--(9.1,4.1);
\draw[fill=black](9.1,4.1)circle (0.05cm)node [above]{$\delta=-4/27$};;

\draw(9.1,2.6)--(10.8,2.6);
\draw[fill=black](9.1,2.6)circle (0.05cm)node [above]{$\delta=-2$};
\draw[fill=black](10.8,2.6)circle (0.05cm);

\end{tikzpicture}
\end{center} 

\subsubsection{Fibration of parameter $\beta$}
We can see the subgraph corresponding to a singular fiber of type $I_2^*$.

\begin{center}
\begin{tikzpicture}[scale=0.9]%
\draw[fill=black](5.1,1.1)circle (0.05cm);
\draw[fill=black](6.1,1.1)circle (0.05cm)node [below]{$\delta=0$};
\draw[fill=black](7.1,1.1)circle (0.05cm);
\draw[fill=black](6.1,2.1)circle (0.05cm);
\draw(5.1,1.1)--(7.1,1.1);
\draw(6.1,1.1)--(6.1,2.1);
\draw[fill=black](7.1,4.1)circle (0.05cm)node [above]{$\delta=\infty$};
\draw(7.1,1.1)--(7.1,4.1);
\draw[fill=red](7.1,2.6)circle (0.09cm)node [left]{$0$};
\draw(7.1,2.6)--(9.1,2.6);
\draw[fill=black](9.1,2.6)circle (0.05cm)node [above]{$\delta=-2$};
\end{tikzpicture}
\end{center}

To get a parameter $\beta$ corresponding to this fibration we do the transformation $x=u+\delta^3$ in equation (\ref{delta}) and obtain a new equation
\[
{y}^{2}-{u}^{3}-\delta\,\left (\delta+1\right )\left (3\,\delta+1
\right ){u}^{2}-{\delta}^{4}\left (\delta+2\right )\left (3\,\delta+2
\right )u-{\delta}^{7}\left (\delta+2\right )^{2}.
\]
The point $(0,0)$ is singular $\mod \delta$ and $\mod \delta+2$. By calculation we see that  $\beta=\frac{u}{\delta^2(\delta+2)}$ fits. We have  a Weierstrass equation
\begin{equation}
y^2=x^3+2\beta^2(\beta-1)x^2+\beta^3(\beta-1)^2 x.
\label{beta}
\end{equation}

The Mordell-Weil group is of order $2.$
 The singular fibers are
\[
\begin{array}
[c]{cccc}%
\text{at} & \beta=0 & \text{of type} & III^*\\
\text{at} & \beta=\infty   & \text{ of type } & I_{2}^*\\
\text{at} & \beta=1    & \text{ of type } & I_{1}^*\\
\end{array}
\]

\subsubsection{Fibration of parameter $\phi$}
We can see the subgraph corresponding to a singular fiber of type $I_7^*$.
\begin{center}
\begin{tikzpicture}[scale=0.9]%
\draw[fill=black](0.1,1.1)circle (0.05cm);
\draw[fill=black](1.1,1.1)circle (0.05cm);
\draw[fill=black](2.1,1.1)circle (0.05cm);
\draw[fill=black](3.1,1.1)circle (0.05cm)node [below]{$\delta=0$};
\draw[fill=black](4.1,1.1)circle (0.05cm);
\draw[fill=black](5.1,1.1)circle (0.05cm);
\draw[fill=black](6.1,1.1)circle (0.05cm);
\draw[fill=black](7.1,1.1)circle (0.05cm);
\draw[fill=black](1.1,2.1)circle (0.05cm);
\draw(0.1,1.1)--(7.1,1.1);
\draw(1.1,1.1)--(1.1,2.1);

\draw[fill=black](7.1,4.1)circle (0.05cm)node [above]{$\delta=\infty$};

\draw(7.1,1.1)--(7.1,4.1);
\draw[fill=red](7.1,2.6)circle (0.09cm)node [left]{$0$};
\draw(7.1,2.6)--(9.1,2.6);

\draw[fill=black](9.1,2.6)circle (0.05cm)node [above]{$\delta=-2$};
\end{tikzpicture}
\end{center}
As previously, we start with the equation in $y,u$ and seek for a parameter of the form $\phi'=\frac{u}{\delta^2(\delta+2)}+\frac{a'}{\delta}$. We choose $a'$ to get an equation $y^2=P(u)$ with $P$ of degree $\leq 4$; we find $a=\frac{1}{2}$. Let $\phi=\phi'+1$, a Weierstrass equation is then 
\begin{equation}
y^2={x}^{3}+2\,{\phi}^{2}\left (4\,\phi-7\right ){x}^{2}-4\,{\phi}^{3}
\left (-3\,\phi+8\,{\phi}^{2}-4\right )x+8\,\left (3+4\,\phi\right ){
\phi}^{6}.
\end{equation}  

The singular fibers are
\[
\begin{array}
[c]{cccc}%
\text{at} & \phi=0 & \text{of type} & III^*\\
\text{at} & \phi=\infty   & \text{ of type } & I_{7}^*\\
\text{at} & \phi=\phi_0   & \text{ of type } & I_{1}\\
\end{array}
\]
where $\phi_0$ is a root of the polynomial $8x^2-13x+16$. 
The Mordell-Weil group is $0.$

The next table gives the corresponeance between parameters and elliptic fibrations.

\[%
\begin{array}
[c]{|c|c|c|c|c|}\hline
\text{parameter} & \text{singular fibers} & \text{type of reducible fibers%
}&\text{Rank}&\text{Torsion}\\\hline\hline
1-s & 2I_{8},I_{4},I_{2},2I_1 & A_{1},A_{3},A_{7},A_{7}&0&8\\\hline
2-k & I_{1}^{\ast},I_{12},I_{2},3I_{1} & A_{11},A_{1},D_{5}&1&4\\\hline
3-v & I_{8},I_{10},6I_{1} & A_{7},A_{9}&2&0\\\hline
4-a & I_{8},I_{1}^{\ast},I_6,3I_{1} & D_{5},A_{5},A_{7}&1&0\\\hline
5-d & 2I_{2}^{\ast},I_{2},I_{0}^{\ast} & A_{1},D_{4},2D_{6}&1&2\times 2\\\hline
6-p & I_{2}^{\ast},I_{4}^{\ast},I_{2},I_{4} & A_{1},D_{6},A_{3},D_{8}&0&2\times 2\\\hline
7-w & I_{6},I_{12},2I_{2},2I_{1} & A_{5},A_{1},A_{1}A_{11}&0&6\\\hline
8-b & 2IV^{\ast},I_{6},2I_{1} & A_{5},E_{6},E_{6}&1&3\\\hline
9-r & I_{6}^{\ast},I_{2}^{\ast},I_{2},2I_{1} & D_{6},A_{1},D_{10}&1&0\\\hline
10-e & III^{\ast},I_{4}^{\ast},2I_{2},I_{1} & A_{1},A_{1},D_{8},E_{7}&1&2\\\hline
11-f & III^{\ast},II^{\ast},I_{4},I_{1} & E_{7},A_{3},E_{8}&0&0\\\hline
12-g & 2III^{\ast},I_{4},I_{2} & E_{7},E_{7},A_{1},A_{3}&0&2\\\hline
13-h & 2II^{\ast},I_{2},2I_{1} & A_{1},E_{8},E_{8}&1&0\\\hline
14-t & 2I_{4}^{\ast},I_{2},2I_{1} & A_{1},D_{8},D_{8}&1&2\\\hline
15-l & I_{10},I_{3}^{\ast},5I_{1} & A_{9},D_{7}&2&0\\\hline
16-o & I_{12}^{\ast},I_{2},4I_{1} & A_{1},D_{16}&1&2\\\hline
17-q & I_{10}^{\ast},I_{4},I_{2},2I_{1} & A_{3},A_{1},D_{14}&0&2\\\hline
18-m & I_{16},3I_{2},2I_{1} & A_{1},A_{1},A_{1},A_{15}&0&4\\\hline
19-n & I_{16},I_{2},6I_{1} & A_{1},A_{15}&2&0\\\hline
20-j & IV^{\ast},I_{12},I_{2},2I_{1} & A_{11},E_{6},A_{1}&0&3\\\hline
21-c & I_{18},6I_1 & A_{17}&1&3\\\hline
22-u & I_{8}^{\ast},I_{1}^{\ast},I_{2},I_{1} & A_{1},D_{5},D_{12}&0&2\\\hline
23-i & I_{13}^{\ast},I_{2},3I_{1} & A_{1}D_{17}&0&0\\\hline
24-\psi &  III^{\ast},I_{6}^{\ast},3I_{1} & E_{7}D_{10}&1&2\\\hline
25-\delta &  I_{5}^{\ast},II^{\ast},I_{2},2I_{1} & E_{8},A_{1}D_{9}&0&0\\\hline
26-\pi &  I_{3}^{\ast},I_{6}^{\ast},I_{2},I_{1} & A_{1},D_{10},D_{7}&0&2\\\hline
27-\mu &  IV^{\ast},I_{10},2I_{2},2I_{1} & A_{9},A_{1},A_{1},E_{6}&1&0\\\hline
28-\alpha &  I_{0}^{\ast},I_{14},4I_{1} & D_{4},A_{13}&1&0\\\hline
29-\beta &  III^{\ast},I_{2}^{\ast},I_{1}^{\ast} & E_{7},D_{6},D_{5}&0&2\\\hline
30-\phi &  III^{\ast},I_{7}^{\ast},2I_{1} & E_{7},D_{11}&0&0\\\hline
\end{array}
\]

\end{document}